\newtheorem{theorem}{Theorem}
\numberwithin{theorem}{section}
\newtheorem{proposition}[theorem]{Proposition}
\newtheorem{lemma}[theorem]{Lemma}
\newtheorem{corollary}[theorem]{Corollary}
\newtheorem{conjecture}[theorem]{Conjecture}
\newtheorem{challenge}[theorem]{Challenge}
\newtheorem{problem}[theorem]{Problem}
\theoremstyle{definition}
\newtheorem{remark}[theorem]{Remark}
\newtheorem{example}[theorem]{Example}
\newcommand{\RR}{\mathbb{R}}
\newcommand{\QQ}{\mathbb{Q}}
\newcommand{\ZZ}{\mathbb{Z}}
\newcommand{\PP}{\mathbb{P}}
\newcommand{\CC}{\mathbb{C}}
\date{}
\definecolor{darkblue}{RGB}{0,0,160}
\title{\textbf{The Geometry of Gaussoids}
\author{Tobias Boege, Alessio D'Al\`i, Thomas Kahle and Bernd Sturmfels} }
\begin{document}
\maketitle

\vspace{-5ex}
\begin{center}
\textit{Dedicated to the memory of Franti\v{s}ek Mat\'u\v{s}}
\end{center}

\begin{abstract}
\noindent
A gaussoid is a combinatorial structure that encodes independence in
probability and statistics, just like matroids encode independence in
linear algebra.  The gaussoid axioms of Ln\v{e}ni\v{c}ka and Mat\'u\v{s}
are equivalent to compatibility with certain quadratic relations among
principal and almost-principal minors of a symmetric matrix.  We
develop the geometric theory of gaussoids, based on the Lagrangian
Grassmannian and its symmetries.  We introduce oriented gaussoids and
valuated gaussoids, thus connecting to real and tropical geometry.  We
classify small realizable and non-realizable gaussoids.  Positive
gaussoids are as nice as positroids: they are all realizable via
graphical~models.
\end{abstract}

\section{Introduction}

Gaussoids are combinatorial structures that arise in statistics, and
are reminiscent of matroids. They were introduced by Ln\v{e}ni\v{c}ka
and Mat\'u\v{s}~\cite{LM} to represent conditional independence
relations among $n$ Gaussian random variables.  The theory of matroids
is ubiquitous in the mathematical sciences, as it captures the
combinatorial essence of many objects in algebra and geometry.
Matroids of rank $d$ on $[n] = \{1,2,\ldots,n\}$ are possible supports
of Pl\"ucker coordinates on the Grassmannian of $d$-dimensional linear
subspaces in a vector space~$K^n$.

This article develops the geometric theory of gaussoids, with a focus on
parallels to matroid theory.  The role of the Grassmannian is played
by a natural projection of the Lagrangian Grassmannian, namely the
variety of principal and almost-principal minors of a symmetric
$n \times n$-matrix $\Sigma$.  Gaussoids aim to characterize which
almost-principal minors can simultaneously vanish provided $\Sigma$ is
positive definite. This issue is important in statistics, where
$\Sigma$ is the covariance matrix of a Gaussian distribution on
$\RR^n$, and almost-principal minors measure partial correlations. The
sign of a minor indicates whether the partial correlation is positive
or negative. The minor is zero if and only if conditional independence~holds.

Our goal in this paper is to carry out the program that was suggested in~\cite[\S 4]{StuB}.
We assume that our readers are familiar with the geometric approach to
matroids, including oriented matroids and valuated matroids, as well
as basic concepts in algebraic statistics.  Introductory books for the
former include \cite{OM, BGW, MS}.  Sources for the latter
include~\cite{DSS, Stu, StuB}.

Let $\Sigma = (\sigma_{ij})$ be a symmetric $n \times n$-matrix whose
$\binom{n+1}{2}$ entries are unknowns.  A {\em minor} of $\Sigma$ is
the determinant of a square submatrix.  The projective variety
parametrized by all minors of $\Sigma$ is the {\em Lagrangian
Grassmannian} ${\rm LGr}(n,2n)$.  It is obtained by intersecting the
usual Grassmannian ${\rm Gr}(n,2n)$ in its Pl\"ucker embedding in
$\PP^{\binom{2n}{n}-1}$ with a linear subspace.  An affine chart of
${\rm LGr}(n,2n)$ consists of all row spaces of rank $n$ matrices of
the form \setcounter{MaxMatrixCols}{20}
\begin{equation}
\label{eq:IdSigma} \bigl( \,\,{\rm Id}_n \, \,\, \Sigma \,\, \bigr) \quad = \quad
\begin{pmatrix}
1 & 0 & 0 & \cdots & 0 & & \sigma_{11} & \sigma_{12} & \sigma_{13} & \cdots & \sigma_{1n} \\
0 & 1 & 0 & \cdots & 0 & & \sigma_{12} & \sigma_{22} & \sigma_{23} & \cdots & \sigma_{2n} \\
0 & 0 & 1 & \cdots & 0 & & \sigma_{13} & \sigma_{23} & \sigma_{33} & \cdots & \sigma_{3n} \\
\vdots & \vdots & \vdots & \ddots & \vdots &  &  \vdots & \vdots & \vdots & \ddots & \vdots  \\
0 & 0 & 0 & \cdots & 1 & & \sigma_{1n} & \sigma_{2n} & \sigma_{3n} &
\cdots & \sigma_{nn}
\end{pmatrix}.
\end{equation}
The right $n \times n$-block is symmetric.  The quadratic Pl\"ucker
relations for ${\rm Gr}(n,2n)$ restrict to quadrics that define
${\rm LGr}(n,2n)$. It is known that those quadrics form a Gr\"obner
basis.  For more information we refer to Oeding's dissertation
\cite[\S~III.A]{lukediss} and the references therein.

A minor of $\Sigma$ is {\em principal} if its row indices and its
column indices coincide, and it is {\em almost-principal} if its row
and column indices differ in exactly one element.  We introduce
unknowns that represent the $2^n$ principal minors and the
$2^{n-2} \binom{n}{2}$ almost-principal minors:
\[
\mathcal{P} = \bigl\{\,p_I \,:\,I \subseteq [n]\bigr\} \qquad
\text{and} \qquad \mathcal{A} = \big\{a_{ij|K} \,: \, i,j \in [n]
\,\,\hbox{distinct and}\, \,K \subseteq [n] \backslash \{i,j\}
\bigr\}.
\]
To simplify notation, we write $p$ for $p_\emptyset$, $p_{12}$ for
$p_{\{1,2\}}$, $a_{12|3}$ for $a_{12|\{3\}}$, etc.  These unknowns correspond respectively to
the vertices and $2$-faces of the $n$-cube, as shown in
Figure~\ref{fig:eins}.  By convention, $p = 1$, and this
variable serves as a homogenization variable.
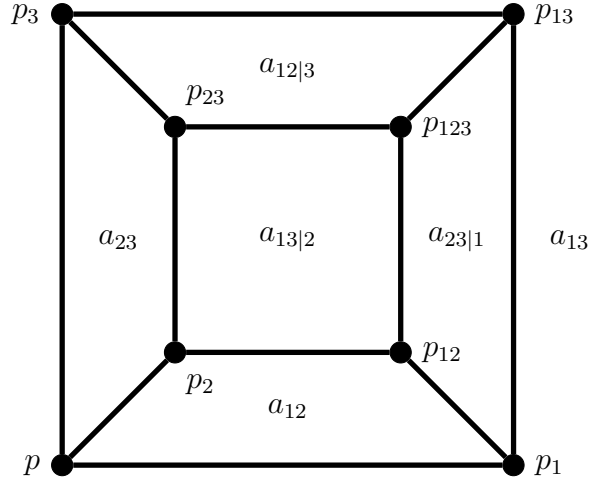
\begin{figure}[ht]
  \begin{center}
    \begin{tikzpicture}[scale=1.5]
      \tikzset{vertex/.style = {shape=circle,fill=black,draw,minimum size=8pt, inner sep=0pt}}
      \tikzset{edge/.style = {line width=2pt}}

      \node [vertex, label=left:$p$] (1) at (0,0) {};
      \node [vertex, label=right:$p_{1}$] (2) at (4,0) {};
      \node [vertex, label=right:$p_{13}$] (3) at (4,4) {};
      \node [vertex, label=left:$p_3$] (4) at (0,4) {};
      \node [vertex, label={-87:$p_{2}$} ] (5) at (1,1) {};
      \node [vertex, label=right:$p_{12}$] (6) at (3,1) {};
      \node [vertex, label=right:$p_{123}$] (7) at (3,3) {};
      \node [vertex, label={87:$p_{23}$}] (8) at (1,3) {};

      \draw [edge] (1) -- (2) -- (3) -- (4) -- (1) ;
      \draw [edge] (5) -- (6) -- (7) -- (8) -- (5) ;
      \draw [edge] (1) -- (5);
      \draw [edge] (2) -- (6);
      \draw [edge] (3) -- (7);
      \draw [edge] (4) -- (8);

      \node (9) at (.5,2) {$a_{23}$};
      \node (10) at (3.5,2) {$a_{23|1}$};
      \node (11) at (4.5,2) {$a_{13}$};
      \node (12) at (2,2) {$a_{13|2}$};
      \node (13) at (2,.5) {$a_{12}$};
      \node (14) at (2,3.5) {$a_{12|3}$};
    \end{tikzpicture}
  \end{center}
  \vspace{-0.13in}
  \caption{\label{fig:eins} The vertices and $2$-faces
    of the $n$-cube are labeled by the set of unknowns $\mathcal{P} \cup \mathcal{A}$.
    \label{fig:3cube} }
\end{figure}\vspace{-2ex}

Consider the homomorphism
$\,\RR[\mathcal{P} \cup \mathcal{A} ] \rightarrow \RR[ \Sigma]\,$ from
a polynomial ring in $2^{n-2}(4+ \binom{n}{2})$ unknowns to a
polynomial ring in $\binom{n+1}{2}$ unknowns, where $p_I$ is mapped to
the minor of $\Sigma$ with row indices $I$ and column indices $I$, and
$a_{ij|K}$ is mapped to the minor of $\Sigma$ with row indices
$\{i\} \cup K$ and column indices $\{j\} \cup K$.  Here, the row
indices are sorted so that $i$ comes first and is followed by
 $K$, and the column indices are sorted so that $j$ comes first and is followed
by $K$, where the elements of $K$ are listed in increasing numerical order.
For instance, $ a_{12|3} $ maps to
$ \sigma_{12} \sigma_{33}- \sigma_{13} \sigma_{23} $ whereas
$ a_{13|2}$ maps to
$ -(\sigma_{12} \sigma_{23} - \sigma_{13} \sigma_{22})$. Maintaining
this sign convention is important to keep the algebra consistent with
its statistical interpretation.

Let $J_n$ denote the ideal generated by all homogeneous polynomials in
the kernel of the map above.  This defines an irreducible variety
$V(J_n)$ of dimension $\binom{n+1}{2}$ in the projective space
$\PP^{2^{n-2}(4+ \binom{n}{2})-1}$ whose coordinates are
$\mathcal{P} \cup \mathcal{A}$.  There is a natural projection from
${\rm LGr}(n,2n)$ onto $V(J_n)$, obtained by deleting all minors that
are neither principal nor almost-principal.  This is analogous to
\cite[Observation~III.12]{lukediss}, where the focus was on principal
minors~$p_I$.

\begin{proposition}
The degree of the projective variety of principal and almost-principal
minors coincides with the degree of the Lagrangian Grassmannian. For
$n \geq 2$, it equals
\[
  {\rm degree}(V(J_n)) \,\, =\,\,
  {\rm degree}({\rm LGr}(n,2n)) \,\,=\,\,
  \frac{ \binom{n+1}{2} ! }{1^{n} \cdot  3^{n-1} \cdot 5^{n-2}\,
    \cdots\, (2n-1) }.
\]
\end{proposition}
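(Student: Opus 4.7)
My plan is to split the assertion into two essentially independent pieces: the closed form of the degree of $\mathrm{LGr}(n,2n)$ (which I will import as classical) and the birationality of the linear projection $\pi\colon\mathrm{LGr}(n,2n)\dashrightarrow V(J_n)$ that drops every Plücker coordinate outside $\mathcal{P}\cup\mathcal{A}$. Once both are in place, the degree equality follows by intersecting $V(J_n)$ with a generic linear subspace of complementary dimension and pulling back through $\pi$.

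For the degree of the Lagrangian Grassmannian, I would simply cite the formula $\binom{n+1}{2}!/(1^n\cdot 3^{n-1}\cdots(2n-1))$ recorded in Oeding's thesis \cite[\S III.A]{lukediss} and the Schubert-calculus references given there. This number equals the count of standard Young tableaux of the staircase shape $(n,n-1,\ldots,1)$ via the hook-length formula, and it matches the easy cases: the smooth quadric $\mathrm{LGr}(2,4)\subset\PP^4$ has degree $2$, and $\mathrm{LGr}(3,6)$ has degree $16$.

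For birationality, I would work on the dense affine chart $\{p_\emptyset\ne 0\}$, on which $\mathrm{LGr}(n,2n)$ is parametrized by the symmetric matrix $\Sigma$ of (\ref{eq:IdSigma}). Under the defining homomorphism of $V(J_n)$, the principal $1$-minor $p_i$ maps to the diagonal entry $\sigma_{ii}$, and the size-one almost-principal minor $a_{ij}$ maps to the off-diagonal entry $\sigma_{ij}$, so every entry of $\Sigma$ is already recorded among the image coordinates. This yields an explicit rational inverse to $\pi$ on a dense open set and proves birationality onto $V(J_n)$. To extract the degree equality, I would take a generic codimension-$\binom{n+1}{2}$ linear subspace $L'$ in the target: its preimage $\tilde L'$ is a linear subspace of $\PP^{\binom{2n}{n}-1}$ of the same codimension that cuts $\mathrm{LGr}(n,2n)$ in $\deg\mathrm{LGr}(n,2n)$ reduced points, and birationality pairs these with the $\deg V(J_n)$ points of $L'\cap V(J_n)$. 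The main obstacle I foresee is checking that $\pi$ is in fact a morphism on all of $\mathrm{LGr}(n,2n)$ --- equivalently, that the base locus of the linear projection does not meet the Lagrangian Grassmannian; the affine-chart computation above rules out the base locus inside $\{p_\emptyset\ne 0\}$, and by the $S_n\times\ZZ/2$ symmetry of $\mathrm{LGr}(n,2n)$ that permutes the principal-minor coordinates, the same reasoning covers the remaining standard charts, so the base locus is empty and the equality follows.
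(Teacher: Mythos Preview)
Your approach is the same as the paper's: cite the classical degree formula for $\mathrm{LGr}(n,2n)$ and show that the linear projection onto $V(J_n)$ is base-point free, hence a degree-preserving birational morphism. The paper cites Hiller rather than Oeding for the formula, but otherwise the strategy is identical.

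There is, however, a genuine gap in your last sentence. The group $S_n\times\ZZ/2\ZZ$ (permutations of $[n]$ together with the duality $I\mapsto[n]\setminus I$) does \emph{not} act transitively on the principal-minor coordinates $\{p_I:I\subseteq[n]\}$: already for $n=2$ it fixes $\{p_\emptyset,p_{12}\}$ setwise and cannot send $p_\emptyset$ to $p_1$. So you cannot use this symmetry to transport the argument from the chart $\{p_\emptyset\neq 0\}$ to the chart $\{p_1\neq 0\}$, say. The correct group is the hyperoctahedral group $B_n=(\ZZ/2\ZZ)^n\rtimes S_n$, the full symmetry group of the $n$-cube, which does act transitively on the vertices $p_I$. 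This is exactly what the paper means by ``permuting coordinates'': each $\ZZ/2\ZZ$ factor swaps columns $i$ and $n+i$ (with a sign) in the $n\times 2n$ matrix~\eqref{eq:IdSigma}, carrying one chart to another. Once you replace $S_n\times\ZZ/2\ZZ$ by $B_n$, your argument goes through: the $2^n$ charts $D(p_I)$ cover $\mathrm{LGr}(n,2n)$, and on $D(p_I)$ the coordinate $p_I\in\mathcal{P}$ is nonzero, so the center of projection is avoided.
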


\begin{proof}
The degree of ${\rm LGr}(n,2n)$ is due to Hiller
\cite[Corollary~5.3]{Hil}.  We learned this formula from Totaro's comment
on the sequence A005118 in the OEIS:
$ 2, 16$, $ 768$, $ 292864, \ldots$

It suffices to show that the birational map from ${\rm LGr}(n,2n)$
onto $V(J_n)$ is base-point free.  There is an affine cover of
${\rm LGr}(n,2n)$ using charts that, after permuting coordinates, all
look like that in~\eqref{eq:IdSigma}.  In such a chart, the center of
the map
$\PP^{\binom{2n}{n}-1} \dashrightarrow
\PP^{2^{n-2}(4+ \binom{n}{2})-1} $ consists of the points whose coordinates indexed
by principal and almost-principal minors are all zero.  No such point
arises from a nonzero~$\Sigma$.
Therefore the center is disjoint
from~${\rm LGr}(n,2n)$.
\end{proof}

There are two natural symmetry classes of trinomials in $J_n$.  First,
there is one trinomial for each $2$-face of the $n$-cube. The
cardinality of that class is $2^{n-2} \binom{n}{2}$.  A
representative~is
\begin{equation} \label{eq:trinomial2face}
a_{12}^2 - p_1 p_2   + p_{12} p.
\end{equation}
Second, there is one trinomial for each inclusion of an edge in a
$3$-cube, in the boundary of the $n$-cube.  The number of these {\em
edge trinomials} is $ 12 \cdot 2^{n-3} \binom{n}{3}$. One representative is
\begin{equation} \label{eq:trinomialedge} p a_{23|1} - p_{1} a_{23} + a_{12} a_{13}.
\end{equation}

In Section~\ref{s:axioms} we review the axiom system for gaussoids
found in~\cite{LM}, and we show in Theorem~\ref{thm:gaussoid} that
these axioms are equivalent to compatibility with the edge
trinomials~\eqref{eq:trinomialedge}.  In Section~\ref{s:symmetry} we
examine a natural action of the group $G = {\rm SL}_2(\RR)^n$ on the
polynomial ring $\RR[\mathcal{P} \cup \mathcal{A}]$. This fixes the
ideal~$J_n$.  Certain finite subgroups of $G$ serve as symmetry groups
for the combinatorial structures in this paper.  In
Section~\ref{s:census} we classify gaussoids up to
$n = 5$, taking into account the various symmetry groups in~$G$.
Our computations make extensive use of state-of-the-art {\em SAT
solvers}.  In Section~\ref{s:orientedAndPos} we introduce and classify
oriented gaussoids.  Theorem~\ref{thm:posGaussoids} asserts that every
positive gaussoid is realizable by an undirected graphical model.
In Section~\ref{s:quadrics} we determine all quadrics in $J_n$,
and we conjecture that they generate.
Section~\ref{s:valuatedG} focuses
on valuated gaussoids and tropical geometry, and
Section~\ref{s:realizable} addresses the realizability problem for
gaussoids and oriented gaussoids.  Our supplementary materials website
\href{https://www.gaussoids.de}{www.gaussoids.de} contains various
classifications reported in this paper.

\section{Gaussians and Axioms}\label{s:axioms}
A symmetric $n \times n$-matrix $\Sigma = (\sigma_{ij})$ is the
covariance matrix of an $n$-dimensional normal (or {\em Gaussian})
distribution if $\Sigma$ is positive definite, i.e., if the $2^n$
principal minors $p_I$ of $\Sigma$ are all positive.  Let
$X_1,X_2,\ldots,X_n$ be random variables whose joint distribution is
Gaussian with covariance matrix~$\Sigma$.  For any subset
$K \subseteq [n] $ we write $X_K$ for the random vector
$(X_i: i \in K)$ in $\RR^{|K|}$.  The variable $X_i$ is independent of
the variable $X_j$ given the variable $X_K$ if and only if the
almost-principal minor $a_{ij|K}$ of $\Sigma$ is zero.  See
\cite[Proposition~3.1.13]{DSS}.
This {\em conditional independence (CI) statement} is usually denoted
by $ X_i \! \perp \!\!\! \perp \! X_j \,|\, X_K $ and also known as an
{\em elementary CI statement}.  Restriction to only these statements
is justified in \cite[\S~2.2.3]{Stu}.  Other notations found in the
literature include $i \! \perp \! j | K$,
$\langle i,j | K \rangle $, and $(ij|K)$.  We shall keep things simple
by identifying all of these symbols with our unknown
$a_{ij|K} \in \mathcal{A}$.

Reasoning and inference with conditional independence statements plays
a fundamental role in statistics, especially in the study of graphical
models \cite{DX,LM,MUYW,Stu,Sul}.  A guiding problem has been to
characterize collections of conditional independence statements that
can hold simultaneously within some class of distributions.  This led
to the theory of {\em semi\-graphoids}; see e.g.~\cite[\S~2]{MUYW}.  We
here focus on the class of Gaussian distributions on~$\RR^n$.  The
guiding problem now takes the following algebraic form: which sets of
almost-principal minors $a_{ij|K}$ can be simultaneously zero for a
positive definite symmetric $n \times n$-matrix~$\Sigma$?

To study this question, Ln\v{e}ni\v{c}ka and Mat\'u\v{s}~\cite{LM}
introduced the following axiom system, which we present here in our
notation.  As before, $\mathcal{A}$ is the set of all symbols
$a_{ij|K}$ where $i,j$ are distinct elements in
$[n] = \{1,2,\ldots,n\}$ and $K$ is a subset of
$[n] \backslash \{i,j\}$. Thus the set $\mathcal{A}$ consists of
$\binom{n}{2} 2^{n-2}$ symbols $a_{ij|K}$.  We identify these symbols
with the $2$-faces of the $n$-cube.

\smallskip Following \cite[Definition~1]{LM}, a subset ${\mathcal G}$
of $\mathcal{A}$ is called a {\em gaussoid} on $[n]$ if it satisfies the following four conditions for
 all pairwise distinct $i,j,k\in [n]$ and  all $L\subseteq [n] \backslash \{i,j,k\}$:
\begin{enumerate}[label=(G\arabic*),ref=(G\arabic*)]
\item\label{it:G1} $\{ a_{ij|L}, a_{ik|jL} \} \subset \mathcal{G}\,$ implies
$\,\{ a_{ik|L}, a_{ij|kL}\} \subset \mathcal{G}$,
\item\label{it:G2} $\{ a_{ij|kL} ,a_{ik|jL}  \} \subset \mathcal{G}\,$ implies
$\,\{ a_{ij|L}, a_{ik|L}\} \subset \mathcal{G}$,
\item\label{it:G3} $\{ a_{ij|L}, a_{ik|L}
  \} \subset \mathcal{G}\,$ implies
$\,\{ a_{ij|kL}, a_{ik|jL}\} \subset \mathcal{G}$,
\item\label{it:G4} $\{ a_{ij|L}, a_{ij|kL}  \} \subset \mathcal{G}\,$ implies
$\,\bigl( a_{ik|L} \in  \mathcal{G}\,$ or
$\, a_{jk|L} \in  \mathcal{G} \bigr)$.
\end{enumerate}
Axiom~\ref{it:G1} is the definition of a {\em semigraphoid}, and
\ref{it:G2} is known as the {\em intersection axiom}.  Axiom
\ref{it:G3} is a {\em converse to intersection}, and axiom \ref{it:G4}
is called {\em weak transitivity}.

Being a gaussoid is a necessary condition for a subset
$\mathcal{G} \subseteq \mathcal{A}$ to comprise the vanishing
almost-principal minors of a positive definite symmetric
$n \times n$-matrix $\Sigma$.  The gaussoid $\mathcal{G}$ is called {\em
realizable} if such a matrix $\Sigma$ exists.  All gaussoids are
realizable for $n=3$. This is no longer true for $n \geq 4$,  as shown in~\cite{DX, LM}.
For an explicit example see
 Remark \ref{rem:RealNull} below.

\begin{example}\label{ex:elevengaussoids} Let $n=3$. The set
$\mathcal{A}$ has $6$ elements, and hence it has $2^6=64$ subsets.  Among
these $64$ subsets, precisely $11$ are gaussoids. They are
\begin{equation}
\label{eq:elevengaussoids}
\begin{matrix}
\{\},\,
\{a_{12}\},\,
\{a_{13}\},\,
\{a_{23}\},\,
\{a_{12|3}\},\,
\{a_{13|2}\},\,
\{a_{23|1}\},
\{a_{12}, a_{12|3}, a_{13}, a_{13|2}\}, \\
\{a_{12}, a_{12|3}, a_{23}, a_{23|1}\},
\{a_{13}, a_{13|2}, a_{23}, a_{23|1}\},
\{a_{12}, a_{12|3}, a_{13}, a_{13|2}, a_{23}, a_{23|1}\}.
\end{matrix}
\end{equation}
Each of these gaussoids $\mathcal{G}$ is realizable by a positive
definite symmetric $3 \times 3$-matrix.  Equivalently, the variety
$V(J_3)$ contains a real point $(p,a)$ whose coordinates $p_I$ are all
positive and whose coordinates that vanish are precisely the elements
$a_{ij|K} $ in $\mathcal{G}$.  We invite the reader to check that all
$11$ gaussoids $\mathcal{G}$ arise from an appropriate point $(p,a)$
in the variety $V(J_3)$.  \hfill $\diamondsuit$
\end{example}

Gaussoids are analogous to matroids.  In matroid theory, one
asks which sets of maximal minors of a rectangular matrix can be
simultaneously nonzero.  Being a matroid is necessary but not sufficient
for this to hold.  V\'amos~\cite{Vam} suggested that there is no finite
axiom system for realizability of matroids.  Mayhew, Newman and
Whittle~\cite{MWN,MNW} finally proved this fact, and Sullivant~\cite{Sul}
established the same result for gaussoids.

One of the many axiom systems for matroids is the combinatorial
compatibility with the quadratic Pl\"ucker relations that define the
Grassmannian~\cite[\S~4]{DW91}.  Our aim is to derive the analogous
result for gaussoids. The role of the Grassmannian ${\rm Gr}(n,2n)$ is now played by a
projection of the Lagrangian Grassmannian
${\rm LGr}(n,2n)$, namely the variety~$V(J_n)$.

Let $f \in J_n$ be any polynomial relation among principal and
almost-principal minors.  A~subset $\mathcal{G}$ of $\mathcal{A}$ is
\emph{incompatible with $f$} if precisely one monomial in $f$ has no
unknown in~$\mathcal{G}$.  Otherwise $\mathcal{G}$ is {\em
compatible with} $f$.  Hilbert's Nullstellensatz implies that
$\mathcal{G}$ is compatible with all $f$ in $J_n$ if and only if it is
realizable by a symmetric $n \times n$-matrix $\Sigma$ with complex
entries.

The ideal $J_n$ contains two classes of distinguished trinomials of
degree two, namely the {\em square trinomials}
(\ref{eq:trinomial2face}), one for each $2$-cube in the $n$-cube, and
the {\em edge trinomials} (\ref{eq:trinomialedge}), one for each
$1$-cube in a $3$-cube in the $n$-cube.  The total number of these
trinomials equals
\begin{equation}
\label{eq:trinomialnumber}
  2^{n-2} \binom{n}{2}\, + \, 12 \cdot 2^{n-3} \binom{n}{3} \,\, =
  \,\,  2^{n-3} n (n-1) (2n-3) .
\end{equation}

To represent the gaussoid axioms algebraically, we use the $12 \cdot 2^{n-3} \binom{n}{3}$ edge trinomials.

\begin{example} \label{ex:J3early} Fix $n=3$.  There are $12$ edge
trinomials, one for each edge in Figure \ref{fig:eins}:
\[
  \begin{matrix}
 p_{1} a_{23} - p a_{23|1} - a_{12} a_{13}\, , \,\,\,
 p_{2} a_{13} - p a_{13|2} - a_{12} a_{23}\, , \,\,\,
 p_{3} a_{12} - p a_{12|3} - a_{23} a_{13}, \\
 p_{12} a_{13} - p_{1} a_{13|2} - a_{12} a_{23|1},\,\,
 p_{12} a_{23} - p_{2} a_{23|1} - a_{12} a_{13|2},\,\,
 p_{13} a_{12} - p_{1} a_{12|3} - a_{13} a_{23|1}, \\
 p_{13} a_{23} - p_{3} a_{23|1} - a_{13} a_{12|3}, \,\,
 p_{23} a_{12} - p_{2} a_{12|3} - a_{23} a_{13|2},\,\,
 p_{23} a_{13} - p_{3} a_{13|2} - a_{23} a_{12|3}, \\
 p_{123} a_{12} - p_{12} a_{12|3} - a_{23|1} a_{13|2},\,
p_{123} a_{13} - p_{13} a_{13|2} - a_{23|1} a_{12|3},\,
p_{123} a_{23} - p_{23} a_{23|1} - a_{12|3} a_{13|2}.
\end{matrix}
\]
The subsets $\mathcal{G}$ of $\mathcal{A}$ that are compatible with
these quadrics are precisely the sets in (\ref{eq:elevengaussoids}).
The full list of all $21$ generators of $J_3$, grouped by symmetry
class, appears in Example \ref{ex:J3}.  \hfill $\diamondsuit$
\end{example}

The edge trinomials for $n \geq 4$ are obtained by replicating these
$12$ quadrics on every $3$-face of the $n$-cube.  We can replace the
indices $1,2,3$ in the first quadric by $i,j,k$ and then add any set
$L \subseteq [n] \backslash \{i,j,k\}$ to get the trinomial
$\, p_{iL} \cdot a_{jk|L} - p_L \cdot a_{jk|iL} - a_{ij|L} \cdot
a_{ik|L}\,$ in $J_n$.

\begin{example} Fix $n=4$. The $4$-cube has $24$ two-dimensional
faces, so $\mathcal{A}$ has $2^{24} = 16777216$ subsets. Only $679$ of
these are gaussoids.  This was found in \cite[Remark~6]{LM}. The
gaussoids on $[4]$ are precisely the subsets $\mathcal{G}$ of
$\mathcal{A}$ that are compatible with the $96$ edge
trinomials. \hfill $\diamondsuit$
\end{example}

The following is our main result in Section~\ref{s:axioms}.  It
generalizes the previous two examples.

\begin{theorem}
\label{thm:gaussoid}
The following conditions are equivalent for a set
$\mathcal{G}$ of
$2$-faces of the $n$-cube:
\begin{itemize}
\item[(a)] $\mathcal{G}$ is a gaussoid, i.e.~the four axioms
  \ref{it:G1}--\ref{it:G4} are satisfied for $\mathcal{G}$;
\item[(b)] $\mathcal{G}$ is compatible with all  edge trinomials
(\ref{eq:trinomialedge}).
\end{itemize}
\end{theorem}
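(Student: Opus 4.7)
Both the axioms (G1)--(G4) and each edge trinomial~\eqref{eq:trinomialedge} are local: they involve only three distinct indices $i,j,k \in [n]$ and a subset $L \subseteq [n] \setminus \{i,j,k\}$. Thus it suffices to work inside a single $3$-cube. There, the $12$ edge trinomials are parameterized by a direction $d \in \{i,j,k\}$ and a position $V \subseteq \{i,j,k\} \setminus \{d\}$; writing $\{p_1, p_2\} = \{i,j,k\} \setminus \{d\}$, they all take the uniform shape $p_{dVL}\,a_{p_1 p_2|L} - p_{VL}\,a_{p_1 p_2|dL} - a_{dp_1|(V \cap \{p_2\})L}\,a_{dp_2|(V \cap \{p_1\})L}$. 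Compatibility of a set $\mathcal{G}$ with a trinomial $p_A X - p_B Y - Z_1 Z_2$ forbids the three situations in which exactly one monomial is nonzero after zeroing out the elements of $\mathcal{G}$; equivalently, it asserts the three implications \textbf{(NI1)} $Y \in \mathcal{G}$ and $Z_\alpha \in \mathcal{G}$ $\Rightarrow$ $X \in \mathcal{G}$; \textbf{(NI2)} $X \in \mathcal{G}$ and $Z_\alpha \in \mathcal{G}$ $\Rightarrow$ $Y \in \mathcal{G}$; \textbf{(NI3)} $X, Y \in \mathcal{G}$ $\Rightarrow$ $Z_1 \in \mathcal{G}$ or $Z_2 \in \mathcal{G}$.

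For (b) $\Rightarrow$ (a) I derive each axiom from one or two compatibility conditions on specific trinomials. Axiom (G4) is (NI3) for the base trinomial $p_{kL}a_{ij|L} - p_L a_{ij|kL} - a_{ik|L}a_{jk|L}$ (direction $d = k$, $V = \emptyset$), and (G3) is (NI2) applied to this trinomial together with its $j \leftrightarrow k$ twin. For (G1), I first use (NI1) on $p_{jL}a_{ik|L} - p_L a_{ik|jL} - a_{ij|L}a_{jk|L}$ to deduce $a_{ik|L} \in \mathcal{G}$, then (NI2) on the direction-$k$ base trinomial to deduce $a_{ij|kL} \in \mathcal{G}$. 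Axiom (G2) is the most delicate: assuming $a_{ij|kL}, a_{ik|jL} \in \mathcal{G}$ and $a_{ij|L} \notin \mathcal{G}$, compatibility with the direction-$k$ base trinomial forces $a_{ik|L}, a_{jk|L} \notin \mathcal{G}$, and then the ``side'' trinomial $p_{jkL}a_{ik|L} - p_{kL}a_{ik|jL} - a_{ij|kL}a_{jk|L}$ (direction $j$, $V = \{k\}$) has exactly one nonzero monomial, contradicting compatibility.

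For (a) $\Rightarrow$ (b) I check trinomial by trinomial that none of the three incompatibility patterns can occur. Pattern (NI3) is immediate from (G4) when $V = \emptyset$; when $V$ is nonempty, (G1) is invoked once to shift the conditioning set of one $Z$-factor by the element of $V$. Patterns (NI1) and (NI2) split on which of $Z_1, Z_2$ lies in $\mathcal{G}$: if the selected $Z$-factor's conditioning matches that of $X$ or $Y$, the required deduction is (G1) or (G3); otherwise the conditioning differs by an element of $V$, and (G2) or (G1) transfers that element. The main obstacle is the bookkeeping across the four shapes $V \in \{\emptyset, \{p_1\}, \{p_2\}, \{p_1, p_2\}\}$ and the three incompatibility patterns, but the $p_1 \leftrightarrow p_2$ involution within each 3-cube---part of the $\mathrm{SL}_2$-action of Section~\ref{s:symmetry}---pairs up the cases and reduces the verification to a short list of representative configurations.
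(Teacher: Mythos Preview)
Your proof is correct. For (b) $\Rightarrow$ (a) you and the paper do essentially the same thing: derive each axiom from compatibility with one or two well-chosen edge trinomials. Your choices differ slightly---for (G2) you run a three-step contradiction using the $V=\emptyset$ and $V=\{k\}$ trinomials, whereas the paper reads off (G2) in one step from the trinomial with $V=\{i,j\}$, namely $p_{ijkL}a_{ij|L}-p_{ijL}a_{ij|kL}-a_{ik|jL}a_{jk|iL}$---but the idea is the same.

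The real divergence is in (a) $\Rightarrow$ (b). The paper simply observes that both conditions are local to $3$-faces, so it suffices to treat $n=3$; there it invokes the explicit list of eleven gaussoids from Example~\ref{ex:elevengaussoids} and the check (already done in Example~\ref{ex:J3early}) that each is compatible with all twelve edge trinomials. Your approach instead carries out a uniform case analysis over the four shapes $V\in\{\emptyset,\{p_1\},\{p_2\},\{p_1,p_2\}\}$ and the three patterns (NI1)--(NI3), deriving each required implication from the axioms directly. This buys you a proof that does not rely on any finite enumeration, at the cost of the bookkeeping you mention; the paper's route is much shorter on the page because the $n=3$ enumeration is already in hand. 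One small correction: the $p_1\leftrightarrow p_2$ involution you use to pair up cases is part of the $S_n$-action, not the $\mathrm{SL}_2$-action.
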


\begin{proof}
We begin by showing the implication from (b) to (a). For each
of the four gaussoid axioms we list either one or two
of the edge trinomials that are relevant:
\begin{itemize}
\item[\ref{it:G1}]
  $  a_{ij|L} a_{jk|iL} + a_{ik|jL} p_{iL}  -  a_{ik|L} p_{ijL} \,$ and
$\,  a_{ij|L} p_{ijkL}  - a_{ik|jL}  a_{jk|iL} -   a_{ij|kL} p_{ijL}$,  \vspace{-0.07in}
\item[\ref{it:G2}]
$ a_{ij|kL} p_{ijL} + a_{ik|jL}a_{jk|iL} - a_{ij|L}p_{ijkL}$
 and $ a_{ik|jL}p_{ikL} + a_{ij|kL}a_{jk|iL}      - a_{ik|L}p_{ijkL}$, \vspace{-0.07in}
\item[\ref{it:G3}]
$a_{ij|L}p_{kL} - a_{ik|L}a_{jk|L} - a_{ij|kL}p_L\,$ and
$a_{ij|L}a_{jk|L} - a_{ik|L}p_{jL} + a_{ik|jL}p_L$,  \vspace{-0.07in}
\item[\ref{it:G4}] $ a_{ij|L} p_{kL} - a_{ij|kL}  p_L -  a_{ik|L} a_{jk|L}$.
\end{itemize}
Compatibility with these quadrics implies the axiom. For instance,
consider axiom \ref{it:G1}.  Suppose that $a_{ij|L}$ and $a_{ik|jL}$
are in $\mathcal{G}$.  Then the first two terms of
$a_{ij|L}a_{jk|iL} + a_{ik|jL}p_{iL} - a_{ik|L}p_{ijL}$ have an
unknown in $\mathcal{G}$. Since $p_{ijL}$ cannot be an element
of~$\mathcal{G}$, we conclude that $a_{ik|L}$ is in
$\mathcal{G}$. Similarly, if the set $\mathcal{G}$ is compatible with
the edge trinomial
$a_{ij|L}p_{ijkL} - a_{ik|jL}a_{jk|iL} - a_{ij|kL}p_{ijL}$ then we can
conclude that $a_{ij|kL}$ is in $ \mathcal{G}$.  The other three
axioms are shown similarly.

For the implication from (a) to (b) we first note that the statement was already shown for $n=3$.
Namely, each of the $11$ gaussoids is compatible with the $12$ edge trinomials. Now, suppose $n \geq 4$.
 Each of the gaussoid
axioms only refers to collections of unknowns $a_{ij|K}$ that lie within a particular
$3$-face of the $n$-cube. This means that a subset $\mathcal{G}$ of $\mathcal{A}$
is a gaussoid if and only if the restriction of $\mathcal{G}$ to any $3$-face is
one of the $11$ gaussoids on $3$ symbols. The same restriction property holds for
compatibility with the edge trinomials.
\end{proof}

Among the $679$ gaussoids for $n =4$, precisely $629$ are realizable.
The other $50$ are eliminated by the higher axioms in
\cite[Lemma~2.4]{DX} and \cite[Lemma~10]{LM}.  In
Section~\ref{s:realizable} we initiate a similar analysis for
$n = 5$.  Of course, by \cite{Sul}, we cannot hope for a complete
axiom system for Gaussian realizability, and it makes sense to focus on gaussoids and
their relation to the combinatorics of quadrics in $J_n$. This
relation has a striking similarity to matroid theory. It can be
derived from the combinatorics of the the Grassmann--Pl\"{u}cker
relations.  This approach was initiated thirty years ago by Dress
and Wenzel \cite{DW91} and extended recently by Baker and Bowler \cite{BB}.
  The extent to which matroid
theory and gaussoid theory can be further developed in parallel
remains to be investigated.  It seems promising to study {\em gaussoids over hyperfields}.
Here is one concrete conjecture that points in such a direction.

\begin{conjecture} \label{conj:notjustrinomials} Every gaussoid is
compatible with all quadrics in $J_n$, not just trinomials.
\end{conjecture}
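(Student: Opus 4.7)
The plan is to combine the structural classification of quadrics in $J_n$ produced in Section~\ref{s:quadrics} with a $G$-equivariant reduction to a small ground set. Because the group $G = {\rm SL}_2(\mathbb{R})^n$ acts on $\mathbb{R}[\mathcal{P}\cup\mathcal{A}]$ and fixes $J_n$, and because the discrete symmetries inside $G$ permute gaussoids while preserving the compatibility relation, it suffices to verify compatibility for one representative of each $G$-orbit of quadrics in $J_n$.

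I would first dispose of the trinomial cases. Edge trinomials yield the four gaussoid axioms by Theorem~\ref{thm:gaussoid}, so compatibility with them is immediate. Each square trinomial $a_{ij|K}^2 - p_{iK}p_{jK} + p_{ijK}p_K$ contains two monomials built purely out of principal coordinates, and no principal coordinate ever lies in a gaussoid $\mathcal{G}\subseteq\mathcal{A}$. Consequently at least two monomials of a square trinomial are always free of variables from $\mathcal{G}$, so the incompatibility condition of having exactly one free monomial can never be triggered.

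For a quadric $f \in J_n$ with four or more monomials, I would argue by contradiction. Assume a gaussoid $\mathcal{G}$ is incompatible with $f$, so a unique monomial $m_0$ has both factors outside $\mathcal{G}$ while every other monomial $m_i$ has at least one factor in $\mathcal{G}$. The goal is to feed these presence constraints into \ref{it:G1}--\ref{it:G4} and conclude that some factor of $m_0$ must also be in $\mathcal{G}$, contradicting the assumption. To make the case analysis manageable, I would exploit the locality observation used at the end of the proof of Theorem~\ref{thm:gaussoid}: the gaussoid axioms are supported on $3$-faces of the $n$-cube, and one hopes the quadric generators of $J_n$ enjoy an analogous bounded-support property. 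If Section~\ref{s:quadrics} establishes that every degree-two element of $J_n$ is, up to the $G$-action, supported on a face of the $n$-cube of fixed small dimension, the conjecture reduces to a finite check against the exhaustive census of gaussoids on $[n]$ for small $n$ produced in Section~\ref{s:census}, which can be automated with the same SAT pipeline used there.

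The chief obstacle, I expect, is precisely this support bound. It is not at all clear that $J_n$ is generated in degree two by quadrics supported on faces of a uniformly bounded dimension; securing such a bound is essentially what a structure theorem in Section~\ref{s:quadrics} should supply. Without it, one must give a direct algebraic derivation, expressing each non-trinomial quadric as an explicit combination of edge trinomials modulo square trinomials and then verifying that every step of the derivation respects the combinatorics of the support pattern. Since compatibility is \emph{not} preserved under arbitrary ideal operations, each reduction step must correspond to a genuine invocation of one of the four gaussoid axioms, and tracking this bookkeeping uniformly in $n$ is the delicate part. A conservative fallback is therefore to settle the conjecture for all $n \leq 5$ by the finite SAT verification outlined above and leave a symmetry-uniform derivation for general $n$ as a structural subproblem guided by the analogy with the Dress--Wenzel and Baker--Bowler treatments of the Grassmann--Plücker relations.
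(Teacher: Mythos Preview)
The statement is a conjecture; the paper proves it only for $n\le 4$ (Corollary~\ref{cor:notjusttrinomials}) and explicitly says that proof technique does not generalize. The paper's argument for $n\le 4$ is quite different from yours: it observes that every gaussoid on $[4]$ lies in the $B_4$-orbit of a \emph{realizable} gaussoid, and hence is itself realizable by a symmetric matrix with all principal minors nonzero (though not necessarily positive). Such a realization witnesses compatibility of $\mathcal{G}$ with every polynomial in $J_4$ at once, bypassing any quadric-by-quadric analysis. For $n=5,6$ the paper does not claim a proof; it reports only a computational check against the specific quadrics of Theorem~\ref{thm:quadricgens}, and remarks that a full verification would require enumerating all circuits of $(J_n)_2$, which is prohibitive already for $n=5$.

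Your proposed reduction has two concrete problems. First, the claim that it suffices to verify compatibility on one representative per $G$-orbit is not justified: $G={\rm SL}_2(\mathbb{R})^n$ is a continuous group that does not permute gaussoids, and compatibility of $\mathcal{G}$ with a quadric $f$ says nothing about compatibility with $g\cdot f$ for generic $g\in G$, because the monomial support changes under the action. Only the discrete subgroup $B_n$ preserves the relevant combinatorics, and that is a much weaker reduction. Second, your hoped-for bounded-support property is false: Theorem~\ref{thm:quadricgens} exhibits highest-weight quadrics of types~\eqref{q2}, \eqref{q3}, \eqref{q4} that genuinely involve all indices $1,\ldots,m$ for every $m\le n$, so there is no reduction to a fixed-dimensional face of the $n$-cube. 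Your fallback of a finite SAT verification for small $n$ is close in spirit to the paper's computations for $n=5,6$, but note that even that does not settle the conjecture there, since compatibility with a generating set of quadrics does not automatically imply compatibility with every quadric in $(J_n)_2$.
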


A proof for $n \leq 4$ is given in Corollary
\ref{cor:notjusttrinomials}, but that proof technique does not
generalize.  To prove Conjecture~\ref{conj:notjustrinomials} for
$n \geq 5$, it suffices to check compatibility with those quadrics
that are circuits in the subspace $(J_n)_2$ of the space of all
quadrics.  Each circuit lies in one of the weight components described
in Section~\ref{s:symmetry}.  However, that check would amount to a
prohibitive computation, even for $n=5$, because there are too many
circuits.

As support for
Conjecture~\ref{conj:notjustrinomials} we verified compatibility with
the quadrics in Theorem~\ref{thm:quadricgens} for $n=5,6$.  In
general, quadrics with two or more terms that are products of only $p$
variables, such as the square trinomials in \eqref{eq:trinomial2face},
need not be checked, as every subset of $\mathcal{A}$ is compatible
with them.  This situation changes for the valuated gaussoids of
Section~\ref{s:valuatedG}.

Minors and duality play an important role in matroid theory.  The
minors of a matroid are obtained by the iterated application of
deletions and contractions.  These two operations are reversed under
matroid duality.  For gaussoids, the roles of deletion and contraction
are played by {\em marginalization} and {\em conditioning}.  These
statistical operations are also swapped by the duality
$\Sigma \leftrightarrow \Sigma^{-1}$.
Let $\mathcal{G}$ be any gaussoid on $[n]$. The {\em dual gaussoid}
$\mathcal{G}^*$ of $\mathcal{G}$ is
\begin{equation}
\label{eq:dualgaussoid}
  \mathcal{G}^* = \bigl\{ \,a_{ij|[n] \backslash
    (K\cup\{i,j\})}\,\,:\,\, a_{ij|K} \in \mathcal{G} \,\bigr\}.
\end{equation}
For any element $ u \in [n]$, the {\em marginal gaussoid}
$\mathcal{G} \backslash u$ is the gaussoid on $[n] \backslash \{u\}$
given by
\[
  \mathcal{G} \backslash u \,\,\, = \,\,\, \,\bigl\{ \,a_{ij|K}
  \in\mathcal{G} \,\,:\,\, u \not\in \{i,j\} \cup K \,\bigr\}.
\]
Similarly, the {\em conditional gaussoid} $\,\mathcal{G}/ u \,$ is the
gaussoid on $[n] \backslash \{u\}$ given by
\[
  \mathcal{G} / u \,\,\, = \,\,\, \,\bigl\{ \,a_{ij|K} \,:\, a_{ij|K\cup\{u\}} \in \mathcal{G} \,\bigr\}.
\]
We have the following basic result relating these minors and duality:

\begin{proposition}
  If $\mathcal{G}$ is a gaussoid on $[n]$ and $u \in [n]$ then both
  $\mathcal{G} \backslash u $ and $\mathcal{G} / u $ are gaussoids.
  If $\mathcal{G}$ is realizable then so are $\mathcal{G}^*$,
  $\mathcal{G} \backslash u $, and $\mathcal{G} / u $. The following
  duality relation holds:
\begin{equation}
\label{eq:dualityrelation}
 \bigl(\mathcal{G} \backslash u \bigr)^* \, = \, \mathcal{G}^*/u \quad
 \text{ and } \quad
\bigl(\mathcal{G} / u \bigr)^* \, = \, \mathcal{G}^* \backslash u .
\end{equation}
\end{proposition}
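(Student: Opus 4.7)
The plan is to split the proposition into three essentially independent parts: the closure-under-minors claims at the combinatorial level, the three realizability statements at the matrix level, and the set-theoretic duality identities. For the first part, note that each of the axioms \ref{it:G1}--\ref{it:G4}, instantiated at indices $(i,j,k,L)$, only refers to the six symbols $a_{ij|L}, a_{ik|L}, a_{jk|L}, a_{ij|kL}, a_{ik|jL}, a_{jk|iL}$ sitting in a single $3$-face of the $n$-cube. For $\mathcal{G} \backslash u$, the axiom at $(i,j,k,L)$ with $u \notin \{i,j,k\} \cup L$ is literally the same implication already satisfied by $\mathcal{G}$. For $\mathcal{G}/u$, the axiom at $(i,j,k,L)$ translates into the axiom of $\mathcal{G}$ at $(i,j,k, L \cup \{u\})$, which again holds by hypothesis.

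For realizability, start from a positive definite $\Sigma$ realizing $\mathcal{G}$. The marginal $\mathcal{G} \backslash u$ is realized by the principal submatrix $\Sigma_{[n]\setminus\{u\}}$, whose almost-principal minors are precisely the $a_{ij|K}(\Sigma)$ that avoid $u$. The conditional $\mathcal{G}/u$ is realized by the Schur complement $\Sigma' := \Sigma_{VV} - \Sigma_{Vu}\,\sigma_{uu}^{-1}\,\Sigma_{uV}$ with $V = [n]\setminus\{u\}$, which is positive definite (being the covariance of the Gaussian vector $X_V \mid X_u$) and satisfies the standard identity $a_{ij|K\cup\{u\}}(\Sigma) = \pm\, \sigma_{uu} \cdot a_{ij|K}(\Sigma')$, so the vanishing patterns agree. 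The dual $\mathcal{G}^*$ is realized by $\Sigma^{-1}$, again positive definite, via Jacobi's complementary-minor identity
\[
\det\bigl((\Sigma^{-1})_{I,J}\bigr) \,=\, \pm\,\det\bigl(\Sigma_{[n]\setminus J,\,[n]\setminus I}\bigr) \,/\, \det(\Sigma),
\]
applied with $I = \{i\}\cup K'$, $J = \{j\}\cup K'$ for $K' = [n]\setminus(\{i,j\}\cup K)$: this places the almost-principal minor of $\Sigma^{-1}$ at conditioning set $K'$ in a vanishing-preserving bijection with $a_{ij|K}(\Sigma)$, exactly matching the definition~\eqref{eq:dualgaussoid} of $\mathcal{G}^*$.

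The duality relations~\eqref{eq:dualityrelation} are then purely set-theoretic. Writing $[n]' := [n]\setminus\{u\}$, one sees by unwinding the definitions that both $\mathcal{G}^*/u$ and $(\mathcal{G}\backslash u)^*$ equal the collection of all $a_{ij|L'}$ with $i,j\in[n]'$ and $L'\subseteq [n]'\setminus\{i,j\}$ such that $a_{ij|[n]'\setminus(L'\cup\{i,j\})} \in \mathcal{G}$. An analogous computation (or substituting $\mathcal{G} \mapsto \mathcal{G}^*$ in the first identity and using $\mathcal{G}^{**}=\mathcal{G}$) yields the second equality.

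The one technical point to watch in this plan is the sign in Jacobi's identity and in the Schur-complement formula for almost-principal minors: these must be reconciled with the ordering convention fixed in Section~\ref{s:axioms} for $a_{ij|K}$. Since realizability only concerns the vanishing locus and $\sigma_{uu}, \det(\Sigma) > 0$, the signs are harmless, and the remaining arguments reduce to standard facts about Gaussian conditioning and inverse covariance matrices.
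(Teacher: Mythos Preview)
Your proposal is correct and follows essentially the same route as the paper: closure under minors via the locality of the axioms to $3$-faces, realizability via the principal submatrix, the Schur complement, and $\Sigma^{-1}$ (with Jacobi's identity), and the duality relations by direct unwinding. The only cosmetic difference is that the paper checks closure under minors using the edge-trinomial characterization of Theorem~\ref{thm:gaussoid} rather than the axioms \ref{it:G1}--\ref{it:G4} directly, which amounts to the same locality observation.
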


\begin{proof}
The set of edge trinomials in $J_n$ is invariant under the duality
operation that swaps $p_K$ with $p_{[n] \backslash K}$ and also swaps
$ a_{ij|K}$ with $a_{ij|[n] \backslash (K \cup \{i,j\})}$.
Theorem~\ref{thm:gaussoid} hence ensures that $\mathcal{G}^*$ is a
gaussoid.  The duality operation preserves realizability: if a
positive definite matrix $\Sigma$ realizes $\mathcal{G}$, then its
inverse $\Sigma^{-1}$ realizes $\mathcal{G}^*$ by~\cite[Corollary~1
and Lemma~2]{LM}.

A similar argument works for marginalization and conditioning.  The
edge trinomials for $[n] \backslash \{u\}$ appear among those for
$[n]$, and similarly if we augment the index set $K$ with~$u$.  That
realizability is preserved under these operations is
\cite[Lemma~1]{Sim}.  Indeed, if $\Sigma$ realizes~$\mathcal{G}$, then
we obtain a realization of $\mathcal{G} \backslash u$ by deleting row
$u$ and column $u$ from $\Sigma$, and we obtain a realization of
$\mathcal{G} / u$ by taking the Schur complement of $\Sigma$ with
respect to $u$.

The duality relations~\eqref{eq:dualityrelation} are verified by a
direct check, bearing in mind that two of the duals in this formula
are taken with respect to the index set $[n] \backslash \{u\}$ instead
of~$[n]$.
\end{proof}

Kenyon and Pemantle~\cite{KP} initiated the study of
the ideal $J_n$ from the perspective of cluster algebras.
They conjectured a formula for the entries of $\Sigma$ in terms of
principal and almost-principal minors whose index sets are connected.
 That conjecture was proved by Sturmfels, Tsukerman, and Williams in~\cite{STW}.
As explained in~\cite[\S~5]{STW}, this is closely related to
formulas for partial correlations in statistics~\cite{Joe1}.  If
$\Sigma$ is a covariance matrix, the associated {\em correlation
matrix} has ones on the diagonal and off-diagonal entries
$\rho_{ij} = a_{ij}/\sqrt{p_i p_j}$. More generally, the {\em partial
correlations} of the Gaussian distribution  given by
$\Sigma$ are the quantities
\begin{equation}
\label{eq:parcor}
\rho_{ij|K} \,\,\, = \,\,\, \frac{a_{ij|K}}{\sqrt{ p_{iK} p_{jK}}}.
\end{equation}
Joe and his collaborators discuss the algebraic relations among the
$\rho_{ij|K}$ and construct subsets that serve as convenient
transcendence bases modulo these relations.  Their {\em d-vines} in
\cite{Joe1} correspond precisely to the {\em standard networks} of
Kenyon and Pemantle in~\cite{KP}.  Our results on gaussoids and the
ideal $J_n$ immediately imply new constraints on partial correlations.

\section{Symmetry}\label{s:symmetry}

We are interested in the ideal $J_n$ of algebraic relations among
the $2^n$ principal minors $p_L$ and the $\binom{n}{2} 2^{n-2}$
 almost-principal minors $a_{ij|K}$ of a  symmetric $n \times n$-matrix
 of unknowns.  The analogous problem for principal minors alone
 was solved (set-theoretically) by Oeding \cite{lukediss, Oed}.
He showed that the variety of  the elimination ideal $J_n \cap \RR[\mathcal{P}]$
is defined by quartics.

 \begin{example}
 Eliminating the six unknowns in $\mathcal{A}$ from $J_3$, we obtain the principal ideal
\[
  J_3 \cap \RR[\mathcal{P}] \, = \,
\langle\,   p^2 p_{123}^2 + p_1^2 p_{23}^2 + p_2^2 p_{13}^2 + p_3^2 p_{12}^2
+4 p p_{12} p_{13} p_{23} +4 p_1 p_2 p_3 p_{123}
- \cdots -2 p_1 p_3 p_{12} p_{23}  \,     \rangle.
\]
The quartic generator is the $2 \times 2 \times 2$ hyperdeterminant.
This fact was first found in~\cite{HS}.
\hfill $\diamondsuit$
\end{example}

Oeding's result is based on the representation theory of the group
$G = {\rm SL}_2(\RR)^n$.  We aim to understand $J_n$ by using this
technique.  The point of departure is the observation that $G$
acts on the space $W$ spanned by the principal and almost-principal
minors.  This action is induced by the $G$-action on the space of
$n \times 2n$-matrices.  Here, the group ${\rm SL}_2(\RR)$ in the
$i$th factor acts by replacing columns $i$ and $n+i$ by linear
combinations of these two columns.  If we apply this to
\eqref{eq:IdSigma} and then multiply by the inverse of the left
$n \times n$-block then the right $n \times n$-block is again
symmetric.  See \cite[Lemma~13]{HS} for a proof of this crucial
observation.

In this section we study the structure of the $G$-module $W$.
Let $V_i \simeq \RR^2$ denote the defining representation of
the $i$-th factor ${\rm SL}_2(\RR)$.
Let  $W_{\rm pr}$ be the space spanned by all principal minors
and $W_{\rm ap}^{ij}$ the space spanned by the almost-principal
minors $a_{ij|K}$ where $i,j$ are fixed and $K$ runs
over subsets of $[n] \backslash \{i,j\}$. The following is similar to \cite[Theorem~1.1]{Oed}:

\begin{lemma}
\label{lem:eins} We have the following isomorphisms of irreducible $G$-modules:
$$ W_{\rm pr} \,\simeq \,\bigotimes_{i=1}^n V_i \qquad \hbox{and} \qquad
W_{\rm ap}^{ij} \,\,\simeq \bigotimes_{k \in [n] \backslash \{i,j\}} V_k
\quad \hbox{for $\,\, 1 \leq i < j \leq n $.} $$
\end{lemma}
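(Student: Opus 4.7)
The plan is to realize both $W_{\rm pr}$ and $W_{\rm ap}^{ij}$ as $G$-subrepresentations of $\Lambda^n \RR^{2n}$ via the Pl\"ucker embedding of ${\rm LGr}(n,2n)$, and then read off the relevant isotypic components under $G = {\rm SL}_2(\RR)^n$. A direct Laplace expansion along the rows passing through the identity block of $({\rm Id}_n\ \Sigma)$ shows that, up to a sign determined by column order, the principal minor $p_I$ equals the Pl\"ucker coordinate indexed by $S_I := ([n]\setminus I) \cup (n+I)$, and the almost-principal minor $a_{ij|K}$ equals the Pl\"ucker coordinate indexed by $S_{ij|K} := ([n]\setminus(\{i\}\cup K)) \cup (n+(\{j\}\cup K))$. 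These signs do not affect the $G$-module structure.

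Next I would check that the $G$-action on $W$ described in the paper---apply $g=(g_1,\dots,g_n) \in G$ pairwise to the columns $(i, n+i)$, then left-multiply by the inverse of the new left $n\times n$-block---agrees projectively with the natural $G$-action on the row space as a point of ${\rm Gr}(n,2n)$: left multiplication only changes the basis of the row space and hence rescales every Pl\"ucker coordinate by the same factor. Therefore the $G$-module structure on $W$ is the restriction of the canonical action on $\Lambda^n \RR^{2n}$, where $\RR^{2n} = \bigoplus_{i=1}^n V_i$ with $V_i = {\rm span}(e_i, e_{n+i})$ the defining representation of the $i$-th factor; cf.\ \cite[Lemma~13]{HS}.

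The final step is the decomposition
\[
\Lambda^n \Bigl(\bigoplus_{i=1}^n V_i\Bigr) \,\simeq\, \bigoplus_{\substack{(k_1,\dots,k_n) \in \{0,1,2\}^n \\ k_1 + \cdots + k_n = n}} \bigotimes_{i=1}^n \Lambda^{k_i} V_i,
\]
which is a sum of $G$-subrepresentations since each factor of $G$ acts only on its own $V_i$. Because $\Lambda^0 V_i$ and $\Lambda^2 V_i$ are each the trivial one-dimensional ${\rm SL}_2$-module while $\Lambda^1 V_i = V_i$, matching this against the Pl\"ucker indices from the first paragraph gives exactly the claim: the set $\{p_I\}$ lies in the summand with $k_i = 1$ for every $i$, yielding $W_{\rm pr} \simeq \bigotimes_i V_i$; and for fixed distinct $i, j$ the set $\{a_{ij|K}\}_K$ lies in the summand with $k_i = 0$, $k_j = 2$, and $k_\ell = 1$ for $\ell \neq i, j$, yielding $W_{\rm ap}^{ij} \simeq \bigotimes_{\ell \neq i, j} V_\ell$. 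Irreducibility is automatic from Schur's lemma applied factor-by-factor. The only substantive point is verifying that the rescaling step in the second paragraph is projectively trivial; the rest is combinatorial bookkeeping with Pl\"ucker indices, parallel to Oeding's treatment of $W_{\rm pr}$ in \cite[Theorem~1.1]{Oed}.
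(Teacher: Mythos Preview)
The paper states this lemma without proof, pointing only to the analogous result for $W_{\rm pr}$ in Oeding's work. Your argument via the decomposition $\Lambda^n(\bigoplus_i V_i) \simeq \bigoplus_{k_1+\cdots+k_n=n}\bigotimes_i \Lambda^{k_i}V_i$ is correct and is exactly the kind of proof the reference suggests: the Pl\"ucker index $S_I$ picks out the summand with every $k_i=1$, and $S_{ij|K}$ picks out the summand with $k_i=0$, $k_j=2$, $k_\ell=1$ otherwise, giving the claimed tensor products after noting $\Lambda^0 V_i\cong\Lambda^2 V_j\cong\RR$ as trivial ${\rm SL}_2$-modules.

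One small remark on your flagged ``substantive point'': the $G$-action on $\Lambda^n\RR^{2n}$ induced from the column action on $\RR^{2n}$ is already honestly linear, so there is no projective ambiguity to resolve at the level of the $G$-module $W$. The left-multiplication step in the paper's description only serves to return to the affine chart $({\rm Id}_n\ \Sigma')$ and rescales the Pl\"ucker \emph{vector} of that particular matrix representative; it does not enter into the definition of the representation on $(\Lambda^n\RR^{2n})^*$. So that paragraph can be shortened without loss.
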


We use the unknown $x_i$ to refer to the
highest weight of the $G$-module $V_i$.
The highest weight of a tensor product of such modules is the
 product of the corresponding $x_i$.   For instance,
${\rm Sym}_2(V_1) \otimes V_2$ has highest weight $x_1^2 x_2$.
   The {\em formal character} of
a $G$-module is the sum of the Laurent monomials representing the
weights in a weight basis.  Let
$W = W_{\rm pr} \oplus \bigoplus_{i,j} W_{\rm ap}^{ij}$ be the
$G$-module of principal and almost-principal minors. The~set
$ \mathcal{A} \cup \mathcal{P} $ is a distinguished weight basis of
$W$.  By Lemma \ref{lem:eins}, the formal character of $W$ equals
\begin{equation}
\label{eq:formalcharW}
\prod_{i=1}^n(x_i+x_i^{-1})\,\,\,  + \sum_{1 \leq i < j \leq n} \prod_{k \in [n] \backslash \{i,j\}} \!\! (x_k + x_k^{-1}).
\end{equation}
Our prime ideal $J_n$ lives in the polynomial ring
$\,{\rm Sym}_*(W) = \bigoplus_{d=0}^\infty {\rm Sym}_d(W) =
\RR[\mathcal{P} \cup \mathcal{A}]$. It is invariant under the
$G$-action. The weight of a monomial in
$ \RR[\mathcal{P} \cup \mathcal{A}]$ is a vector in~$\ZZ^n$, namely,
the exponent vector of the corresponding Laurent monomial in $x_1,\ldots,x_n$.

We focus on the $G$-module of all quadrics, ${\rm Sym}_2(W)$.  Its dimension equals
\[{\rm dim}({\rm Sym}_2(W)) \,\, = \,\, \bigl( 2^n + 2^{n-2} \binom{n}{2} +1\bigr) \cdot \bigl(2^{n-1} + 2^{n-3} \binom{n}{2} \bigr). \]
The formal character of ${\rm Sym}_2(W)$ is the sum of all pairs of products
(with repetition allowed) of the $ 2^n + 2^{n-2} \binom{n}{2} $ Laurent monomials
that appear in the expansion of (\ref{eq:formalcharW}).

Each irreducible $G$-module has the form
\[S_{d_1 d_2 \cdots d_n} \,\, = \,\,\,
\bigotimes_{i=1}^n {\rm Sym}_{d_i}(V_i),
\]
where $d_1,d_2,\ldots,d_n$ are nonnegative integers.
In Oeding's work \cite{lukediss, Oed}, this module was written as $S_{d_1 0} S_{d_2 0} \cdots S_{d_n 0}$.
The formal character of the irreducible $G$-module $S_{d_1 d_2 \cdots d_n}$ equals
\begin{equation}
\label{eq:irredchar} \prod_{i=1}^n \sum_{\ell=0}^{d_i} x_i^{d_i - 2\ell} \,\, = \,\,
x_1^{d_1} x_2^{d_2} \cdots x_n^{d_n} \,+\,
\hbox{lower terms}.
\end{equation}
Our task is to express the formal character of ${\rm Sym}_2(W)$ as
a sum of Laurent polynomials (\ref{eq:irredchar}), and to
identify the submodule $(J_n)_2$ in terms of the
 irreducible $G$-modules in ${\rm Sym}_2(W)$.

\begin{example}\label{ex:J3} Let $n=3$.
The $8$ principal and $6$ almost-principal minors span the $G$-module
\[W \,\,\,  = \,\,\,  S_{111} \oplus S_{100} \oplus S_{010} \oplus S_{001}. \]
This space of quadrics has dimension $105$. It decomposes into irreducible $G$-modules as
\[{\rm Sym}_2(W) \,\,\, = \,\,\,
 S_{222} \oplus S_{211} \oplus S_{121} \oplus S_{112}
\oplus 2 S_{200} \oplus 2 S_{020} \oplus 2 S_{002} \oplus 2 S_{110} \oplus 2 S_{101} \oplus 2 S_{011}.
\]
The ring $ {\rm Sym}_*(W) = \RR[\mathcal{P} \cup \mathcal{A}] $ has
$8$ unknowns $p_I$ and $6$ unknowns $a_{ij|K}$. They are identified
with the vertices and facets of the $3$-cube
(cf.~Figure~\ref{fig:3cube}).  The weights of the $14$ unknowns~are
\[
\begin{matrix}
{\rm unknown}  & & a_{12} &  a_{12|3} & \cdots & a_{23|1} &
p & p_1 & \cdots & p_{123}
\\ {\rm weight}      & & (0,0,1) &  (0,0,-1)   & \cdots & (-1,0,0) &
(1,1,1) & (-1,1,1) & \cdots & (-1,-1,-1)
\\
\end{matrix}
\]
 The $21$-dimensional space of quadrics in $J_3$ generates the ideal. As a $G$-module,
\[(J_3)_2 \,\,\, = \,\,\, S_{200} \oplus  S_{020} \oplus  S_{002} \oplus  S_{110} \oplus  S_{101} \oplus  S_{011}. \]
We display an explicit weight basis for each summand, beginning
 with the $12$ edge trinomials:\[
  S_{110} \qquad \qquad \qquad \qquad
\begin{bmatrix}
(1, 1, 0) & &  a_{13} a_{23} + a_{12|3} p - a_{12} p_3  \\
(1,-1, 0) & &  a_{13|2} a_{23} + a_{12|3} p_2 - a_{12} p_{23}  \\
(-1,1, 0) & &  a_{13} a_{23|1} + a_{12|3} p_1 - a_{12} p_{13}  \\
\,(-1,-1,0) & &  a_{13|2} a_{23|1} + a_{12|3} p_{12} - a_{12} p_{123}\,
\end{bmatrix} \qquad
\]
\[
 S_{101} \qquad \qquad \qquad \qquad
\begin{bmatrix}
(1, 0, 1) & &  a_{12} a_{23} + a_{13|2} p - a_{13} p_2 \\
(1, 0,-1) & &  a_{12|3} a_{23} + a_{13|2} p_3 - a_{13} p_{23} \\
(-1,0, 1) & &  a_{12} a_{23|1} + a_{13|2} p_1 - a_{13} p_{12} \\
\, (-1,0,-1) & &  a_{12|3} a_{23|1}  + a_{13|2} p_{13} - a_{13} p_{123} \,
\end{bmatrix} \qquad
\]
\[
 S_{011} \qquad \qquad \qquad \qquad
\begin{bmatrix}
(0, 1, 1) & &    a_{12} a_{13} + a_{23|1} p - a_{23} p_1 \\
(0, 1,-1) & &  a_{12|3} a_{13} + a_{23|1} p_3 - a_{23} p_{13} \\
(0,-1, 1) & &  a_{12} a_{13|2} + a_{23|1} p_2  - a_{23} p_{12} \\
\, (0,-1,-1) & &  a_{12|3} a_{13|2} + a_{23|1} p_{23} - a_{23} p_{123} \,
\end{bmatrix} \qquad
\]
See Example \ref{ex:J3early}. The last three $G$-modules account for
the square trinomials:
\[
S_{200} \qquad \qquad \qquad
\begin{bmatrix}
(2, 0, 0)  & & a_{23}^2 + p p_{23} - p_{2} p_{3} \\
(0, 0, 0) & &  2 a_{23} a_{23|1} + p p_{123} + p_1 p_{23} - p_2 p_{13} -p_{12} p_3 \, \\
\,(-2,0, 0) &  & a_{23|1}^2 + p_1 p_{123} - p_{12} p_{13}
\end{bmatrix}
\]
\[
S_{020} \qquad \qquad \qquad
\begin{bmatrix}
(0, 2, 0) & & a_{13}^2 + p p_{13} - p_1 p_3 \\
(0, 0, 0) & &  2 a_{13} a_{13|2}+p p_{123} + p_2 p_{13} - p_1 p_{23} - p_{12} p_3 \,\\
\,(0,-2, 0) & &  a_{13|2}^2 + p_2 p_{123} - p_{12} p_{23}
\end{bmatrix}
\]
\[
S_{002} \qquad \qquad \qquad
\begin{bmatrix}
(0, 0, 2) & &    a_{12}^2 +        p p_{12} - p_1 p_2  \\
(0, 0, 0) & &  2 a_{12} a_{12|3} + p p_{123}+ p_3 p_{12}  - p_1 p_{23} - p_{13} p_2 \,\\
\, (0, 0,-2) & &        a_{12|3}^2 + p_3  p_{123} - p_{13} p_{23}
\end{bmatrix}
\]

The case $n=3$ is so small that every minor of $\Sigma$ is either
principal or almost-principal. Hence $J_3$ is the ideal defining the
Lagrangian Grassmannian ${\rm LGr}(3,6) \subset \PP^{13}$.
This variety has dimension $6$ and degree $16 = 6!/(1^3  3^2 5)$.
The following  code in  {\tt Macaulay2}~\cite{M2}  computes the $21$ quadrics from the $35$
quadrics that cut out the Grassmannian ${\rm Gr}(3,6) $ in $\PP^{19}$:
\begin{verbatim}
R = QQ[p123,p124,p134,p234,p125,p135,p235,p145,p245,p345,
       p126,p136,p236,p146,p246,p346,p156,p256,p356,p456];
I = Grassmannian(2,5,R) + ideal(p124-p236,p125+p136,p134+p235,
                                p346+p245,p356-p145,p256+p146);
J3 = eliminate({p124,p134,p125,p356,p256,p346},I)
\end{verbatim}
From the free resolution (computed with {\tt res J3}) it can be verified that $J_3$ is Gorenstein.

Each of the $20$ generators of the polynomial ring {\tt R} equals (up to sign)
one of the $14$ variables in $\mathcal{P} \cup \mathcal{A}$.
The precise identification is given by the following ordered list of length~$20$:
\begin{equation}
\label{eq:identify}
\begin{matrix}
 p & a_{13} & -a_{12} & p_1 & a_{23} & -p_2 & a_{12}  & a_{13|2} &  -a_{23|1} & p_{12} \\
  p_3 & -a_{23} &  a_{13} & a_{12|3} & -p_{13} & a_{23|1} & p_{23} & -a_{12|3} & a_{13|2} & p_{123}
  \end{matrix}
\end{equation}

One comment for algebraic geometers: canonical curves of genus $9$ are
obtained by intersecting $V(J_3) = {\rm LGr}(3,6)$ with subspaces
$\PP^8$ in $\PP^{13}$.  This was shown by Mukai and further developed
by Iliev and Ranestad~\cite{IR}, who derive the $21$ quadrics
explicitly in \cite[\S~2.3]{IR}.  \hfill $\diamondsuit$
\end{example}

\begin{example}\label{ex:J4} Let $n{=}4$.
There are $16$ principal and $24$ almost-principal minors. They span
\[ W \,\,\, = \,\,\,  S_{1111} \oplus S_{1100} \oplus S_{1010} \oplus S_{1001} \oplus S_{0110}
\oplus S_{0101} \oplus S_{0011}.  \]
The space of quadrics has dimension $820$. It decomposes into irreducible $G$-modules as
\[
\begin{matrix} {\rm Sym}_2(W) & = &
 S_{2222} \oplus S_{2211} \oplus S_{2121} \oplus S_{2112} \oplus S_{1221} \oplus
 S_{1212} \oplus S_{1122} \oplus 2 S_{2200} \oplus 2 S_{2020} \\ & & \oplus\, 2 S_{2002}  \oplus \,2 S_{0220}
 \oplus 2 S_{0202} \oplus 2 S_{0022} \oplus 2 S_{2110} \oplus 2 S_{2101} \oplus 2 S_{2011}
 \oplus 2 S_{1210} \\ & & \oplus\, 2 S_{1201} \oplus 2 S_{0211}  \oplus\, 2 S_{1120} \oplus
 2 S_{1021} \oplus 2 S_{0121} \oplus 2 S_{1102} \oplus 2 S_{1012} \oplus
 2 S_{0112} \\  & & \oplus \, 3 S_{1111} \oplus 3 S_{1100} \oplus\, 3 S_{1010} \oplus 3 S_{1001} \oplus 3 S_{0110} \oplus
 3 S_{0101} \oplus 3 S_{0011} \oplus 7 S_{0000}.
 \end{matrix}
\]
The $226$-dimensional submodule of quadrics that vanishes on our variety equals
\begin{equation}
\label{eq:fourtwo}
\begin{matrix} (J_4)_2 &=&
S_{2200} \oplus S_{2020} \oplus S_{2002} \oplus S_{0220} \oplus S_{0202} \oplus
 S_{0022} \oplus S_{2110} \oplus S_{2101} \oplus S_{2011}   \\ & &
 \oplus \, S_{1210} \oplus S_{1201}  \oplus S_{0211} \oplus
 S_{1120} \oplus S_{1021} \oplus S_{0121} \oplus S_{1102} \oplus
 S_{1012}   \\  & &  \oplus \,
 S_{0112} \oplus S_{1100} \oplus S_{1010} \oplus S_{1001}
 \oplus S_{0110} \oplus S_{0101} \oplus S_{0011} \oplus 4 S_{0000}.
 \end{matrix}
\end{equation}
Each of the four copies of the one-dimensional module $S_{0000}$ is
spanned by a $G$-invariant quadric in the ideal $J_4$.  Here is one
such invariant that involves none of the principal minors:
\[
a_{14}a_{14|23}-a_{14|2} a_{14|3}-a_{23}a_{23|14}+a_{23|1} a_{23|4}.
\]
This quadric can be derived from the quadrics in
Theorem~\ref{thm:quadricgens} \eqref{q4}.  It is instructive to locate
the $24$ square trinomials and the $96$ edge trinomials inside the summands seen in
(\ref{eq:fourtwo}).  \hfill $\diamondsuit$
\end{example}

In Section~\ref{s:quadrics} we study the quadrics in~$J_n$. This uses
the action of the Lie algebra $\mathfrak{g}$  of the group $G$.  The situation
differs from that in \cite{lukediss, Oed}. Oeding's hyperdeterminantal ideal is generated
by a single irreducible module for the action of $G \rtimes S_n$ on
${\rm Sym}_4(W_{\rm pr})$. In our case, there are many irreducibles
even modulo the action of $S_n$.  The space $(J_3)_2$ in
Example~\ref{ex:J3} decomposes into two irreducible
$G \rtimes S_3$-modules, and $(J_4)_2$ in Example~\ref{ex:J4}
decomposes into five irreducible $G \rtimes S_4$-modules.  This
complexity accounts for the difficulties in
Section~\ref{s:quadrics}.

We now shift gears and discuss a collection of finite groups that
act on the combinatorial structures studied in this paper.  These
finite groups arise from the following inclusions:
\begin{equation}
\label{eq:fourgroups}
S_n \,\,\subset \,\,  (\ZZ/2 \ZZ)^n \rtimes S_n   \,\, \subset \,\,
(\ZZ/4 \ZZ)^n  \rtimes S_n  \,\, \subset \, \, G \rtimes S_n .
\end{equation}
The symmetric group $S_n$ acts by permuting indices in the unknowns
$p_I$ and $a_{ij|K}$, and by simultaneously permuting the rows and
columns of the matrix $\Sigma$ in~\eqref{eq:IdSigma}.  The third group
in (\ref{eq:fourgroups}) is obtained by taking the following cyclic
subgroup in each factor ${\rm SL}_2(\RR)$:
\[
\ZZ/4 \ZZ  \quad \simeq \quad \biggl\{
\begin{pmatrix} 1 & 0 \\ 0 & 1 \end{pmatrix}, \,
\begin{pmatrix} 0 & 1 \\ -1 & 0 \end{pmatrix}, \,
\begin{pmatrix} -1 & 0 \\ 0 & -1 \end{pmatrix}, \,
\begin{pmatrix} 0 & -1 \\ 1 & 0 \end{pmatrix} \biggr\}.
\]

\begin{remark} \label{rem:notpreserved} The action of $(\ZZ/4 \ZZ)^n$
on the Lagrangian Grassmannian ${\rm LGr}(n,2n)$ takes symmetric
matrices to symmetric matrices, but it changes their signatures. It thus
does not preserve the property that $\Sigma$ is positive definite.  In
fact, already the induced action by the hyperoctahedral group
(discussed below) does not preserve realizability of gaussoids.
\end{remark}

Consider the subgroup $R_n \,= \,\bigl\{(\pm {\rm Id}_2, \pm {\rm Id}_2, \ldots,
\pm {\rm Id}_2) \bigr\} \,$ of $\,{\rm SL}_2(\RR)^n$.
 Each element in this group corresponds to
an $n \times n$-diagonal matrix $D$ with entries in $\{-1,+1\}$.  {\em
Reorientation} is the action of $R_n$ that maps $\Sigma$ to
$D \Sigma D$.  This does not change the principal minors of $\Sigma$.  In
particular, if $\Sigma$ is positive definite, then so is $D \Sigma D$.
Under this action, the almost-principal minor $a_{ij|K}$ transforms
into $D_{ii} D_{jj} a_{ij|K} = \pm a_{ij|K}$.  This action is trivial
for gaussoids, but it is non-trivial for oriented gaussoids, as we shall see in
Section~\ref{s:orientedAndPos}.

In order to get a faithful action on the set of gaussoids we need to
take the quotient of $ (\ZZ/4 \ZZ)^n \rtimes S_n$ modulo its normal
subgroup $(\ZZ/2 \ZZ)^n \rtimes \{{\rm Id} \}$. The resulting group
$(\ZZ/2 \ZZ)^n \rtimes S_n$ is the {\em hyperoctahedral group~$B_n$}.
It acts on the set of gaussoids as the symmetry group of the $n$-cube.
The quotient $(\ZZ/2\ZZ)^n$ acts by swapping indices in and out from
the index sets $I$ and $K$ in the quantities $p_I$ and $a_{ij|K}$.
When expressed in terms of $\Sigma$, the latter action looks like a
fusion of matrix inversion and Schur complements.  Consider the
subgroup of the hyperoctahedral group given by
$\ZZ/2\ZZ =\{({\rm Id}_2, \ldots,{\rm Id}_2), ((\begin{smallmatrix}
0 & 1\\
-1 & 0
\end{smallmatrix}), \ldots
,(\begin{smallmatrix}
0 & 1\\
-1 & 0
\end{smallmatrix}))
\}$ inside
$ (\ZZ/2 \ZZ)^n$:
\begin{equation}
\label{eq:twogroups}
 S_n \,\, \subset \,\,   (\ZZ/2 \ZZ) \rtimes S_n \,\,\subset \,\,
   (\ZZ/2 \ZZ)^n \rtimes S_n  .
\end{equation}
The group $\ZZ/2\ZZ$ in the middle of (\ref{eq:twogroups}) acts on the
set of gaussoids by the {\em duality} in (\ref{eq:dualgaussoid}).
Algebraically, this is the involution on ${\rm LGr}(n,2n)$ that maps
$\Sigma$ to its negative inverse $-\Sigma^{-1}$.  In summary, these
finite group actions are subtle.  In particular, the distinction
between the reorientation group $R_n$ and the $(\ZZ/2\ZZ)^n$-factor
of~$B_n$ is crucial.

\section{Census of Small Gaussoids}\label{s:census}

In this section we derive and discuss the following result.  The proof
for $n=5$ is by computation. It rests on using state-of-the-art
software from the field of {\em SAT solvers} \cite{sharpSat, TodaAllSAT}.

\begin{theorem} \label{thm:gaussoidcensus}
For  $n=3,4,5$, the number of gaussoids $\mathcal{G}$ is as follows,
up to symmetries:
{\rm
\begin{center}
\begin{tabular}{c||ccccccc|}
  $n$&& \hbox{all gaussoids}  & \hbox{orbits for} $S_n$ &  $\ZZ/2\ZZ \rtimes S_n$  & $(\ZZ/2\ZZ)^n \rtimes S_n$\\
  \hline
  3&& 11		      & 5	     & 4	& 4   \\
  4&& 679		      & 58	     & 42	& 19  \\
  5&& 60,212,776	      & 508,817      &   254,826	 & 16,981  \\
\end{tabular}
\end{center}
}
\end{theorem}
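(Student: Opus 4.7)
The strategy is to translate the counting problem into a model-count and orbit-count for a propositional formula, using Theorem~\ref{thm:gaussoid} to bypass the four axioms entirely. For each $a_{ij|K} \in \mathcal{A}$ I introduce a Boolean variable $x_{ij|K}$ recording membership in $\mathcal{G}$. Each edge trinomial $p_{iL}\,a_{jk|L} - p_L\,a_{jk|iL} - a_{ij|L}\,a_{ik|L}$ from \eqref{eq:trinomialedge} imposes a compatibility constraint; since the $p$-coordinates are never in $\mathcal{A}$, this constraint is the clause asserting that it is not the case that exactly one of the three monomials is free of $\mathcal{G}$-variables, which is a short formula in the $x_{ij|K}$'s directly expressible in CNF. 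The $12 \cdot 2^{n-3}\binom{n}{3}$ edge trinomials assemble into a propositional formula $\Phi_n$ whose models are in bijection with the gaussoids on $[n]$.

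For $n=3$ the eleven models of $\Phi_3$ are exactly those listed in \eqref{eq:elevengaussoids}, and for $n=4$ the count $679$ reproduces \cite[Remark~6]{LM}; both are small enough for brute enumeration. The novel case is $n=5$, where $\Phi_5$ has $80$ variables and $480$ trinomial-clauses. The raw search space $2^{80}$ rules out naive enumeration, but a state-of-the-art {\em SAT solver} for model counting such as \texttt{sharpSAT}~\cite{sharpSat} processes $\Phi_5$ and returns the claimed total $60{,}212{,}776$. I would replicate the count using Toda's AllSAT engine~\cite{TodaAllSAT} on the identical DIMACS encoding to guard against solver-specific bugs.

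For the orbit columns, the groups $H$ in the chain $S_n \subset \ZZ/2\ZZ \rtimes S_n \subset (\ZZ/2\ZZ)^n \rtimes S_n$ act on $\mathcal{A}$ as described in Section~\ref{s:symmetry}: $S_n$ permutes the index labels, the central $\ZZ/2\ZZ$ is the duality of \eqref{eq:dualgaussoid}, and the $(\ZZ/2\ZZ)^n$-factor swaps indices in and out of $K$. Two independent routes are available. One enumerates all models of $\Phi_n$ via an AllSAT pass and buckets each gaussoid by a canonical $H$-orbit representative, for instance its lexicographic minimum under the $H$-action. The other is Burnside's lemma: for each $h \in H$, augment $\Phi_n$ with the equivalences $x_{ij|K} \leftrightarrow x_{h\cdot(ij|K)}$, model-count to obtain $|\mathrm{Fix}(h)|$, and divide $\sum_h |\mathrm{Fix}(h)|$ by $|H|$. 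The orbit-stabilizer identity $|\{\text{gaussoids}\}| = \sum_{\mathcal{O}} |H|/|\mathrm{Stab}(\mathcal{O})|$ ties the three columns to the total and serves as a consistency check across all four entries of each row.

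The principal obstacle is the $n=5$ row. The raw model count of $\sim 6 \cdot 10^7$ means an AllSAT enumeration produces an output of considerable size, and while $\Phi_5$ is modest by contemporary SAT standards, its symmetry-rich structure places real demands on clause learning, component caching, and variable ordering inside the solver. Cross-validation between the AllSAT-plus-canonicalization route and the Burnside route is the main safeguard against subtle errors in the group-action encoding, and the final plausibility check is that the four totals in the $n=5$ row match via the orbit-stabilizer identity above.
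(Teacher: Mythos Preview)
Your proposal is correct and matches the paper's approach: both reduce the enumeration to a SAT instance, count models with \texttt{sharpSAT} and Toda's AllSAT engine, and then compute orbits by processing the full list. The only cosmetic difference is that the paper encodes the axioms \ref{it:G1}--\ref{it:G4} directly into CNF (Lemma~\ref{lem:GaussoidEnumWithSAT}, giving $1680$ clauses for $n=5$) rather than the edge-trinomial compatibility you derive via Theorem~\ref{thm:gaussoid}; your Burnside cross-check for the orbit counts is a pleasant addition the paper does not mention.
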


The second, third, and fourth column report the number of orbits under
the group actions described in (\ref{eq:twogroups}).
Theorem~\ref{thm:gaussoidcensus} for $n=3$ is
Example~\ref{ex:elevengaussoids}, where the $11$ gaussoids are
listed. There are five orbits under permuting the indices $1,2,3$. The
two singleton orbits fuse to a single orbit under the group
$\ZZ/2\ZZ \rtimes S_3$.  For instance, the gaussoids $\{a_{12}\}$ and
$\{a_{12|3} \}$ are swapped under duality. The same four orbits
persist under the action of the hyperoctahedral group
$ (\ZZ/2\ZZ)^3 \rtimes S_3$, since $|\mathcal{G}|$ is an invariant of
that action.  For $n=4$, Ln\v{e}ni\v{c}ka and Mat\'u\v{s}~\cite{LM}
showed that there are $679$ gaussoids, of which $629$ are realizable.
Their computation was confirmed by Drton and Xiao~\cite{DX}.  The
action by the hyperoctahedral group $ (\ZZ/2\ZZ)^4 \rtimes S_4$ was
not used in \cite{DX, LM}, but we find this to be natural in our
setting.

\begin{lemma}
The $679$ gaussoids for $n=4$ are organized into orbits as follows:
\begin{itemize}
\item The symmetric group $ S_4 $ of order $24$ acts on the gaussoids
by permuting indices.  There are $58$ orbits under that action. Five
of these orbits consist of non-realizable gaussoids.
\item The twisted symmetric group $\ZZ/2\ZZ \rtimes S_4$ of order $48$
acts on the gaussoids by duality and permuting indices.  This action
preserves realizability, and it has $42$ orbits.  Five of these orbits
consist of non-realizable gaussoids.
\item Under the action of the hyperoctahedral group
$(\ZZ/2\ZZ)^4 \rtimes S_4$ of order $384$, there are $19$ orbits.
Three of the orbits contain non-realizable gaussoids.
\end{itemize}
\end{lemma}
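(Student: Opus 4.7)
The plan is essentially computational, building on the enumeration of the $679$ gaussoids in \cite{LM,DX}. First I would take as input the complete list of $679$ gaussoids, which we can reproduce independently via Theorem~\ref{thm:gaussoid}: a subset $\mathcal{G} \subseteq \mathcal{A}$ is a gaussoid on $[4]$ iff it is compatible with each of the $96$ edge trinomials, and this check can be run as a Boolean constraint over the $24$-element ground set. With the list in hand, I would then implement each of the three group actions, represent each gaussoid as a bitmask of length $24$, and compute orbits by a standard union-find over the generators of the group.

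For the $S_4$-action, the generators are transpositions $(i\ j)$ acting on the labels inside $a_{ij|K}$; relabeling sends $a_{ij|K}$ to $a_{\sigma(i)\sigma(j)|\sigma(K)}$. The orbit computation is a textbook sweep and is expected to yield $58$ orbits. For the twisted group $\ZZ/2\ZZ \rtimes S_4$ I add the duality involution (\ref{eq:dualgaussoid}), $a_{ij|K} \mapsto a_{ij|[4]\setminus(K\cup\{i,j\})}$, which by the proof of Proposition~2.8 sends gaussoids to gaussoids; this is expected to merge orbits down to $42$. For the hyperoctahedral group $(\ZZ/2\ZZ)^4 \rtimes S_4$, I add the $n$ generators $t_u$ that swap each index $u\in[4]$ in and out of the conditioning set $K$ of every symbol $a_{ij|K}$ (with $u\neq i,j$); this is the index-toggling action described after (\ref{eq:twogroups}). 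Running union-find with all these generators is expected to produce $19$ orbits.

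For the realizability counts, I would import the classification of Drton and Xiao~\cite{DX}: exactly $629$ of the $679$ gaussoids are realized by some positive definite symmetric $4\times 4$ matrix, and the complement consists of $50$ non-realizable gaussoids. Realizability is an $S_n$- and duality-invariant property by Proposition~2.8, so each $S_4$-orbit and each $(\ZZ/2\ZZ \rtimes S_4)$-orbit is either entirely realizable or entirely non-realizable; tallying the orbits that lie inside the $50$-element non-realizable set should give $5$ in both cases. For the hyperoctahedral action, however, Remark~3.4 warns that $(\ZZ/2\ZZ)^n$-toggles do not preserve realizability, so the count here is of orbits that \emph{contain at least one} non-realizable gaussoid; enumerating the fibers of the $19$ orbits over the $50$-element set should give $3$ orbits with non-realizable members.

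The main obstacle is not really mathematical but bookkeeping: one must fix a sign-consistent convention for $a_{ij|K}$ (as set up before Figure~\ref{fig:eins}) so that the duality involution and the hyperoctahedral toggles act on the set of gaussoids in a manner that matches their meaning on the variety $V(J_n)$, and one must verify that the realizability data imported from \cite{DX,LM} is indexed by the same convention. Once this is nailed down, all three orbit counts and all three non-realizable counts come out of a short enumeration. A careful implementation, whose output agrees with the census tabulated in Theorem~\ref{thm:gaussoidcensus} and posted on \href{https://www.gaussoids.de}{www.gaussoids.de}, completes the proof.
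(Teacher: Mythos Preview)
Your proposal is correct and matches the paper's approach: the lemma is established computationally as part of the census in Theorem~\ref{thm:gaussoidcensus}, with the $679$ gaussoids enumerated, the three group actions implemented exactly as you describe, and the non-realizability data imported from \cite{DX,LM}; the paper then exhibits the $19$ hyperoctahedral orbits explicitly and certifies the non-realizable ones via Remark~\ref{rem:RealNull}. One small numbering slip: the remark warning that the hyperoctahedral action does not preserve realizability is Remark~\ref{rem:notpreserved}, not 3.4.
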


The difference between the three group actions can already be seen
for the $24$ singleton gaussoids. These correspond to the $2$-faces
of the $4$-cube. The symmetric group
$S_4$ has three distinct orbits on the set $\mathcal{A}$: the six $1 \times 1$-minors $a_{ij}$,
the twelve $2 \times 2$-minors $a_{ij|k}$, and the
six $3 \times 3$-minors $a_{ij|kl}$.
The $1 \times 1$-minors and the $3 \times 3$-minors are swapped under duality.
So, there are two orbits of size $12$ for the group $\ZZ/2\ZZ \rtimes S_4$.
Finally, the full symmetry group of the $4$-cube acts transitively on the
$2$-faces. Hence that group has only one orbit of size $24$.

The following $19$ items are the orbits of the
hyperoctahedral group $(\ZZ/2\ZZ)^4 \rtimes S_4$.
The symbol $\,\ell_m\,$ at the beginning indicates that the orbit
consists of $m$ gaussoids $\mathcal{G}$, each of cardinality $|\mathcal{G}| = \ell$.
This is followed by a list of $\ZZ/2\ZZ \rtimes S_4$-orbits,
each given  by its lexicographically first
representative. For instance, the fourth item, marked $2_{48}$, is
a hyperoctahedral orbit of size $48$
that consists of two-element gaussoids $\mathcal{G}$.
If we restrict to  permuting indices and duality then this orbit
breaks into four smaller orbits, of cardinalities $6,6,12$ and~$24$.

Five of the small orbits are distinguished by double-brackets
$[[ \,\, ]] $ instead of single curly brackets $\{ \,\, \}$.  The
$50 = 8 + 6+ 6 + 6+ 24$ elements in these five
$\ZZ/2\ZZ \rtimes S_4$-orbits are the non-realizable gaussoids.  For
instance, the eight triple gaussoids in the orbit
$[[ a_{12|3}, a_{13|4}, a_{14|2} ]]_8$ are non-realizable.  We discuss
the issue of realizability in more detail after our list.
\begin{align*}
  0_1:& \ \ \emptyset_1 \\
  1_{24}:& \ \ \{a_{12}\}_{12} \qquad \{ a_{12|3} \}_{12} \\
  2_{12}:& \ \ \{a_{12}, a_{12|34} \}_6 \qquad \{a_{12|3}, a_{12|4} \}_6  \\
  2_{48}:& \ \ 
\{a_{12}, a_{34} \}_6 \ \ \ \
\{a_{12}, a_{34|12} \}_6 \  \ \ \ 
\{ a_{12|3}, a_{34|1} \}_{12} \ \  \ \ 
\{ a_{12}, a_{34|1} \}_{24} \\
 2_{96}:& \ \ 
\{ a_{12}, a_{13|24} \}_{24} \ \ \ \
\{ a_{12}, a_{13|4} \}_{48} \ \ \ \ 
\{ a_{12|3}, a_{13|4} \}_{24} \\
 3_{32} :& \ \ 
\{ a_{12}, a_{13|24}, a_{14|3} \}_{24} \ \ \ \
[[ a_{12|3}, a_{13|4}, a_{14|2} ]]_8 \\
3_{48}:& \ \ 
\{a_{12}, a_{12|34}, a_{34}\}_{12} \  \ \  \
\{a_{12}, a_{12|34}, a_{34|1} \}_{12} \ \  \  \
\{a_{12}, a_{34|1}, a_{34|2}\}_{12} \  \ \ \
\{a_{12|3}, a_{12|4}, a_{34|1} \}_{12} \\
  3_{48}:& \ \ 
\{a_{12}, a_{12|34}, a_{34} \}_{12} \ \  \ \
\{a_{12}, a_{12|34}, a_{34|1} \}_{12} \ \  \ \
\{a_{12}, a_{34|1}, a_{34|2} \}_{12} \ \  \ \
\{a_{12|3}, a_{12|4}, a_{34|1} \}_{12} \\
  3_{192}:& \ \ 
\{a_{12}, a_{13|24}, a_{24|13}\}_{24} \ \  \
\{a_{12}, a_{13|4}, a_{24|3} \}_{24} \  \ \
	    \{a_{12}, a_{13|4}, a_{34|12} \}_{24}  \\
  & \ \  \{a_{12|3}, a_{13|4}, a_{24|1} \}_{24} \ \  \
 \{a_{12}, a_{13|24}, a_{24|3} \}_{48} \ \   \
\{a_{12}, a_{13|4}, a_{34|2} \}_{48} \\
 4_{12}:& \ \ 
\{ a_{12}, a_{12|34}, a_{34}, a_{34|12} \}_3 \qquad
\{a_{12|3}, a_{12|4}, a_{34|1}, a_{34|2} \}_3 \qquad
[[ a_{12}, a_{12|34}, a_{34|1}, a_{34|2}]]_6 \\
 4_{24}:& \ \  \{a_{12}, a_{12|3}, a_{13}, a_{13|2}\}_{24} \\
   4_{48}:& \ \ 
\{ a_{12}, a_{13|4}, a_{24|3}, a_{34|12} \}_{12} \  \ \
[[ a_{12}, a_{13|24}, a_{24|13}, a_{34}]]_6 \\
& \ \ 
[[ a_{12|3}, a_{13|4}, a_{24|1}, a_{34|2}]]_6 \  \ \
[[ a_{12}, a_{13|24}, a_{24|3}, a_{34|1}]]_{24} \\
 5_{48}:& \ \ 
\{ a_{12}, a_{12|3}, a_{13}, a_{13|2}, a_{23|14}\}_{24} \ \qquad
\{ a_{12}, a_{12|3}, a_{13}, a_{13|2}, a_{23|4}\}_{24} \\
 6_8:& \ \  \{ a_{12}, a_{12|3}, a_{13}, a_{13|2}, a_{23}, a_{23|1} \}_8 \\
 7_{48}:& \ \ 
\{ a_{12}, a_{12|3}, a_{12|34}, a_{13}, a_{13|2}, a_{24|13}, a_{24|3}\}_{24} \qquad
\{a_{12}, a_{12|3}, a_{12|4}, a_{13}, a_{13|2}, a_{24}, a_{24|1}\}_{24} \\
 12_4:& \ \ 
\{a_{12}, a_{12|3}, a_{12|34}, a_{12|4}, a_{13}, a_{13|2}, a_{13|24}, a_{13|4}, a_{14}, a_{14|2}, a_{14|23}, a_{14|3}\}_4 \\
 14_{24}:& \ \ 
\bigl(\mathcal{A} \backslash \{a_{23|14}, a_{23|4}, a_{24}, a_{24|1}, a_{24|13}, a_{24|3}, a_{34}, a_{34|1}, a_{34|12}, a_{34|2}\}\bigr)_{24} \\
 16_3:& \ \ 
\bigl(\mathcal{A} \backslash \{
a_{14}, a_{14|2}, a_{14|23}, a_{14|3}, a_{23}, a_{23|1}, a_{23|14}, a_{23|4}\}\bigr)_3 \\
  20_6:& \ \ 
\bigl(\mathcal{A} \backslash \{a_{34}, a_{34|1}, a_{34|12}, a_{34|2}\}\bigr)_6 \\
 24_1:& \ \ \bigl(\mathcal{A}\bigr)_1
\end{align*}

It is instructive to look at the list above through the lens of
Remark~\ref{rem:notpreserved}.  The action of
$(\ZZ/2\ZZ)^4 \rtimes S_4$ on the variety $V(J_4)$ and on the $679$
gaussoids can be understood via the Lagrangian Grassmannian
${\rm LGr}(4,8) \subset \PP^{39}$.  Here a symmetric
$4 \times 4$-matrix $\Sigma$ corresponds to the $4 \times 8$-matrix
$\bigl({\rm Id}_4 \,\, \Sigma\bigr)$, where ${\rm Id}_4$ is the
$4 \times 4$ identity matrix. The group $S_4$ acts on this
$4 \times 8$-matrix by simultaneously permuting rows and columns of
$\Sigma$ and of~${\rm Id}_4$.  Each factor of $(\ZZ/2\ZZ)^4$ switches
a column of ${\rm Id}_4$ with the corresponding column of $\Sigma$ and
changes the sign of one of the columns.  If one multiplies the
$4 \times 8$-matrix on the left by the inverse of its left
$4 \times 4$-block, then the result is a matrix
$\bigl({\rm Id}_4 \,\, \Sigma'\bigr)$, where $\Sigma'$ is symmetric
by~\cite[Lemma~13]{HS}.

After swapping one column and switching the
sign, the signatures of the symmetric matrices
$\Sigma'$ and $\Sigma $ differ by one. Thus, if $\Sigma$ is
positive definite, then $\Sigma'$ is not positive~definite. After
having performed this operation for all four columns,
the resulting matrix $\Sigma'$ is negative definite.
We then replace $\Sigma'$ by its negative $-\Sigma'$ to get a
positive definite matrix.  Including this last step, this group
action represents gaussoid duality, which retains realizability.

Because of this change in signature, the action of
$(\ZZ/2\ZZ)^4 \rtimes S_4$ on gaussoids
does not preserve realizability in the Gaussian sense where
all $p_I$ are to remain positive. However it does retain
a weaker notion of realizability which only requires
that the $p_I$ remain nonzero.

\begin{remark} \label{rem:RealNull}
The non-realizability of the five $\ZZ/2\ZZ \rtimes S_4$-orbits that
were highlighted above can be certified by polynomials in the ideal $J_4$.
The existence of  such certificates is guaranteed by the {\em Real Nullstellensatz}.
For example, to show that the gaussoid
$\mathcal{G} = \{a_{12|3}, a_{13|4}, a_{14|2}\}$ has no
Gaussian realization, we can use the following algebraic relation which lies in $J_4$:
\small
\[
a_{14} \bigl(a_{34}^2 p_2 p_4 p_{23} +a_{23}^2 a_{34}^2 p_{24}+p_2^2 p_3 p_4 p_{34} \bigr)
\, -\,(a_{23} a_{24} a_{34}+p_2 p_3 p_4) ( a_{24} p_4 \underline{a_{12|3}}\,
+\,a_{24} a_{23} \underline{a_{13|4}}\,+\,p_3 p_4 \underline{a_{14|2}}).
\]
\normalsize
Indeed, in any realization the second summand is zero.  However, the
first summand is nonzero because the three terms in the left
parenthesis are strictly positive.  Starting from the proofs in
\cite[Corollary~4]{LM}, we can derive similar certificates for the
other four gaussoids.
\end{remark}

\begin{corollary} \label{cor:notjusttrinomials}
Conjecture \ref{conj:notjustrinomials} is true for $n=4$.
\end{corollary}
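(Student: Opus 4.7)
The plan is to combine the automatic compatibility enjoyed by realizable gaussoids with a symmetry reduction to the orbit list of the preceding lemma.

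First I would observe that any realizable gaussoid $\mathcal{G}$ is compatible with every element of $J_n$, not only the trinomials. If $\Sigma$ is a positive definite symmetric matrix realizing $\mathcal{G}$ and $f \in J_n$, then $f(\Sigma) = 0$; specializing $a_{ij|K} \mapsto 0$ for every $a_{ij|K} \in \mathcal{G}$ in $f$ leaves a polynomial whose monomials are products of principal minors $p_I$ and almost-principal minors $a_{ij|K} \notin \mathcal{G}$, each of which is nonzero at $\Sigma$ (the former by positive definiteness, the latter by the definition of realization). Such a specialization therefore cannot collapse to a single nonzero monomial, which is precisely the compatibility condition. This handles the $629$ realizable gaussoids for $n=4$.

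Next I would leverage the symmetry of $J_n$. Since $J_n$ is fixed by the action of $G \rtimes S_n$ on $\RR[\mathcal{P} \cup \mathcal{A}]$, the space $(J_n)_2$ is preserved by the hyperoctahedral subgroup $(\ZZ/2\ZZ)^n \rtimes S_n$, which acts on variables by signed permutations. Signed permutations preserve the monomial support of any polynomial, so if some $f \in (J_4)_2$ witnesses incompatibility of $g \cdot \mathcal{G}$ then $g^{-1} f \in (J_4)_2$ witnesses incompatibility of $\mathcal{G}$. Hence compatibility with $(J_4)_2$ is constant along hyperoctahedral orbits of gaussoids.

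To finish, I would inspect the orbit list above. Exactly three hyperoctahedral orbits contain non-realizable elements, namely $3_{32}$, $4_{12}$ and $4_{48}$, and each of them also contains an explicit realizable representative (for instance $\{a_{12}, a_{13|24}, a_{14|3}\}$, $\{a_{12}, a_{12|34}, a_{34}, a_{34|12}\}$ and $\{a_{12}, a_{13|4}, a_{24|3}, a_{34|12}\}$ respectively). By the first two steps, every gaussoid in these three orbits is compatible with $(J_4)_2$, while the remaining sixteen orbits are entirely realizable and hence compatible as well.

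The hard step for extending this argument beyond $n=4$ is the very last one: Remark~\ref{rem:notpreserved} warns that the hyperoctahedral action does not preserve realizability, and for $n \geq 5$ there is no reason to expect that every hyperoctahedral orbit of gaussoids still contains a realizable element. When the shortcut through the first step fails, one is forced into a direct linear-algebra verification on $(J_n)_2$, which the paper notes is already prohibitive for $n=5$.
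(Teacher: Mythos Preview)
Your proof is correct and follows essentially the same approach as the paper's: both exploit the observation (read off from the orbit list) that every $B_4$-orbit of gaussoids contains a realizable member, and then transfer compatibility along the orbit. The paper phrases the transfer step slightly differently---it pulls a positive-definite realization of the realizable orbit-mate through the $B_4$-action to obtain a \emph{weak} realization of $\mathcal{G}$ (all $p_I$ nonzero but not necessarily positive), which immediately forces compatibility with every element of $J_4$, not just the quadrics---whereas you argue directly that compatibility is a $B_4$-invariant property via the signed-permutation action on monomial supports. These are two formulations of the same idea; the paper's route has the mild advantage of yielding the stronger conclusion (compatibility with all of $J_4$) and of sidestepping the need to distinguish the $(\ZZ/4\ZZ)^n \rtimes S_n$-action on the ring from the $B_n$-action on gaussoids.
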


\begin{proof}
Each of the five non-realizable gaussoids $\mathcal{G}$ has a
realizable gaussoid in its orbit under the group~$B_4$.  Hence
$\mathcal{G}$ admits a realization where all $p_I$ are nonzero, but
some are negative.  The existence of such a non-Gaussian realization
shows that $\mathcal{G}$ is compatible with every polynomial
in~$J_4$. In particular, $\mathcal{G}$ is compatible with every
quadric in $J_4$.
\end{proof}

We now come to the census of gaussoids for $n=5$.  This is the
main result in Theorem~\ref{thm:gaussoidcensus}.  It is derived by
direct computation using SAT solvers.  Here SAT stands for the
satisfiability problem of propositional logic. This problem is NP-complete.
However, there have been considerable advances in solving
SAT problems in practice.  We believe that these techniques can be
useful for a wide range of problems in applied algebraic geometry.

All SAT solvers use the same input: a Boolean formula in conjunctive
normal form (cnf). A {\em cnf formula} is a conjunction of clauses,
where a clause is a disjunction of variables or negated variables.
Every Boolean formula can be brought into cnf.  A standard file format
is the \verb|DIMACS| cnf file format.  There are three natural problems for
a given cnf formula:

\textbf{SAT}: Is the formula satisfiable?

\textbf{\#SAT}: How many satisfying assignments are there?

\textbf{AllSAT}: Enumerate all satisfying assignments.

\noindent
The three problems are listed by increasing difficulty.  The third is
the most relevant for us.  For example, gaussoid enumeration is an
instance of {\bf AllSAT}.  To show this, we introduce one Boolean
variable for each element $a_{ij|K}$ of~$\mathcal{A}$.  The gaussoid
$\mathcal{G}$ consists of those variables that take the value zero.
This convention is consistent with the assignment of zero to the
variables in the gaussoid, when checking compatibility with the edge
trinomials.  The gaussoid axioms can be formulated as Boolean
formulas.  Specifically, \ref{it:G1}-\ref{it:G3} can be written as
$A \land B \Rightarrow C \land D$ where $A,B,C,D$ are statements of
the form $a_{ij|K}\in \mathcal{G}$, or $a_{ij|K} = 0$.
The implication above can be brought into cnf with two disjunctions as
follows:
\begin{gather*}
  \{A \land B \Rightarrow C \land D \} \quad \Longleftrightarrow \quad  (C \lor \neg A \lor \neg B)
\,  \land\, (D \lor \neg A \lor \neg B).
\end{gather*}
The weak transitivity axiom \ref{it:G4} is of the form $A \land B \Rightarrow C \lor D$. It has
the simple cnf
\[
  C \lor D \lor \neg A \lor \neg B.
\]
These axioms in cnf
need to be specified for all possible choices of $i,j,k,L$ in \ref{it:G1}-\ref{it:G4}.

\begin{lemma}\label{lem:GaussoidEnumWithSAT}
The enumeration of all gaussoids on $[n]$ is an instance of
the {\tt AllSAT} problem,
where the  Boolean formula in conjunctive normal form
 (cnf) has $\,7\binom{n}{3}2^{n-3}3!\,$ clauses.
\end{lemma}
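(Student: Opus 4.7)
The plan is to formalize the encoding sketched immediately before the lemma, and then count clauses. First I would introduce one Boolean variable $A_{ij|K}$ for each element $a_{ij|K}\in\mathcal{A}$, declaring the variable true precisely when $a_{ij|K}\in\mathcal{G}$; this identifies truth assignments with subsets $\mathcal{G}\subseteq\mathcal{A}$. The goal is then to produce a cnf formula whose satisfying assignments are exactly those for which all instances of \ref{it:G1}--\ref{it:G4} hold, so that enumerating the gaussoids on $[n]$ becomes an instance of \textbf{AllSAT}.

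Next I would execute the cnf translation for each axiom, with the quantified indices $i,j,k$ and subset $L$ fixed. Axioms \ref{it:G1}, \ref{it:G2}, \ref{it:G3} each have the shape $A\wedge B\Rightarrow C\wedge D$ and, as recalled before the lemma, each unfolds into the two clauses $(\neg A\vee\neg B\vee C)\wedge(\neg A\vee\neg B\vee D)$. Axiom \ref{it:G4}, of the form $A\wedge B\Rightarrow C\vee D$, contributes the single clause $\neg A\vee\neg B\vee C\vee D$. Thus each instance of $(i,j,k,L)$ produces $2+2+2+1=7$ clauses, and by construction the conjunction of all these clauses is satisfied precisely when the corresponding subset of $\mathcal{A}$ obeys the gaussoid axioms.

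Finally I would count the instances. The quantifiers range over pairwise distinct $i,j,k\in[n]$ and subsets $L\subseteq[n]\setminus\{i,j,k\}$. It is important to enumerate \emph{ordered} triples $(i,j,k)$: for \ref{it:G1}--\ref{it:G3}, exchanging $j$ and $k$ yields a genuinely different implication (indeed, it swaps hypothesis and conclusion), so all $n(n-1)(n-2)=3!\binom{n}{3}$ ordered choices are needed. For each such triple, the $2^{n-3}$ admissible subsets $L$ contribute $7$ clauses each, giving the advertised total of $\,7\binom{n}{3}2^{n-3}3!\,$ clauses.

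The only mild subtlety, which I would note in passing, is that \ref{it:G4} is symmetric under the exchange $i\leftrightarrow j$, so the ordered enumeration produces each \ref{it:G4}-clause twice. This duplication is harmless for correctness and does not alter the advertised count; it merely means the cnf we construct is not of minimal size. Beyond this bookkeeping point no real obstacle is expected, since the lemma is essentially a compact repackaging of the translation already described before its statement.
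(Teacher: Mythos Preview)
Your proposal is correct and follows essentially the same approach as the paper: encode each $a_{ij|K}$ by a Boolean variable, translate each instance of \ref{it:G1}--\ref{it:G3} into two clauses and each instance of \ref{it:G4} into one, then count $7$ clauses per ordered triple $(i,j,k)$ and subset $L$. The paper's proof is a single sentence stating exactly this count; your added remark about the $i\leftrightarrow j$ redundancy in \ref{it:G4} is a correct observation the paper omits.
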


\begin{proof}
  For each choice of an ordered triple $(i,j,k)$ from $ [n]$
  and a subset  $L$ of $ [n] \backslash\{i,j,k\}$, we
  have introduced seven clauses: two for each
  axiom \ref{it:G1}-\ref{it:G3} and one for \ref{it:G4}.
\end{proof}

\begin{proof}[Proof of Theorem \ref{thm:gaussoidcensus}]
For $n \leq 4$ see the discussion above.  The proof for $n=5$ consists of the following
computation.  Using Lemma~\ref{lem:GaussoidEnumWithSAT}, we expressed
the gaussoid axioms as an {\bf AllSAT} instance with $1680$ clauses.
The formula was then solved using the solver \verb|bdd_minisat_all|
due to Toda and Takehide~\cite{TodaAllSAT}.  The output is the list of
all $60212776$ gaussoids.  This count was verified independently using
Thurley's \textbf{\#SAT} solver {\tt sharpSAT}~\cite{sharpSat} on the same input.
To group the gaussoids into orbits under the actions of the three finite
groups in \eqref{eq:twogroups} we wrote our own code in {\tt
sage}~\cite{sage}.  The numbers of orbits we found are those in the
table.  Our homepage \href{https://www.gaussoids.de}{www.gaussoids.de}
contains this data and the code to reproduce it.
\end{proof}

\section{Oriented Gaussoids and Positivity}\label{s:orientedAndPos}

Theorem~\ref{thm:gaussoid} establishes a strong parallel between
matroids and gaussoids. An important feature of matroid theory is its
numerous extensions, notably~to oriented matroids~\cite{OM},
positroids~\cite{ARW}, and valuated matroids~\cite{DW}.  The analogues
in our setting are oriented gaussoids, positive gaussoids, and
valuated gaussoids.  This section is devoted to the first two of these.

Given any gaussoid $\mathcal{G}\subset\mathcal{A}$, we can assign
orientations $+$ or $-$ to the unknowns $a_{ij|K}$
in~$\mathcal{A}\backslash\mathcal{G}$.  These represent inequalities
$a_{ij|K} > 0$ and $a_{ij|K} < 0$ that express the sign of the partial
correlation (\ref{eq:parcor}) among the random variables $X_i$ and
$X_j$ given $X_K$.  Not all assignments are compatible with the edge
relations, which is a necessary condition for representability.

An \emph{oriented gaussoid} is a map $\mathcal{A} \to \{0,\pm 1\}$
with the following property for each edge trinomial: after setting
elements in $\mathcal{P}$ to $+1$ and elements in $\mathcal{A}$ to
their images, the set of resulting terms is either $\{0\}$ or
$\{-1,+1\}$ or $ \{-1,0,+1\}$.  A \emph{positive gaussoid} is an
assignment $\mathcal{A} \to \{0, 1\}$ satisfying the same compatibility
requirement. For any oriented gaussoid, the inverse image of $0$ is a
gaussoid~$\mathcal{G}$.  This is analogous to the {\em chirotope
axiom} for oriented matroids~\cite[\S~1.2]{OM}, which expresses
compatibility with the Grassmann--Pl\"ucker relations.  An oriented
gaussoid with $\mathcal{G} = \emptyset$, so that
$\mathcal{A} \rightarrow \{\pm 1\}$, is called a {\em uniform oriented gaussoid}.

Positroids are oriented matroids all of whose bases are positively
oriented.  They play an important role in representation theory and
algebraic combinatorics, and they have desirable geometric properties.
Ardila, Rinc\'on, and Williams proved a longstanding conjecture of
Da~Silva by showing that all positroids are realizable~\cite{ARW}.  In
Theorem~\ref{thm:posGaussoids} we prove the same fact for gaussoids.
Positive gaussoids are important for statistics, because they
correspond to the ${\rm MTP}_2$ distributions, which have received a
lot of attention in the recent literature~\cite{fallat,LUZ}.

We begin by discussing the enumeration of oriented gaussoids.
We start with an ordinary gaussoid $\mathcal{G}$.
The aim is to list all of its orientations.  According to
Theorem~\ref{thm:gaussoid}, when setting $\mathcal{P}$ to $1$ and
$\mathcal{G}$ to $0$ in the edge trinomials,
each trinomial either vanishes, stays a trinomial, or
becomes a binomial.  The resulting nonzero polynomials are the
\emph{mutilated edge relations}.  They combinatorially constrain the
possible orientations. Here is a simple example:

\begin{example}\label{ex:notPositive}
Fix $n=4$ and consider the singleton gaussoid
$\mathcal{G} = \{a_{34|2}\}$.  The edge tri\-nomial
$p_{12}a_{34|2}-p_{2}a_{34|12}-a_{13|2}a_{14|2}$ is mutilated to
$-a_{34|12}-a_{13|2}a_{14|2}$.  This bi\-nomial precludes four of the
eight possible assignments of signs to its three unknowns.  In
particular, assigning all $+$ is forbidden. Hence $\mathcal{G}$ is not
positively orientable. Still, $\mathcal{G}$ has $576$
orientations.\hfill$\diamondsuit$
\end{example}

Enumerating all orientations of a gaussoid $\mathcal{G}$ can be
formulated as an \textbf{AllSAT} instance.  We use one binary variable
$V_a$ for each element $a \in \mathcal{A}\backslash\mathcal{G}$.  We
set $V_a = 1$ when $a \mapsto -1$ and $V_a= 0$ when $a \mapsto +1$.
With this convention, the addition $V_a \oplus V_b$ in the field
$\mathbb{F}_2$ gives the sign of the product $ab$.  Consider a
non-mutilated edge trinomial $a - b - cd$, where $a, b, c, d$ are
elements in $\mathcal{A}$.  Compatibility means: whenever one term is
positive, another term must be negative, and vice versa. This
translates into the following Boolean formula:
\begin{equation}
  (\neg V_a \vee V_b \vee (V_c\oplus V_d))\,\, \Leftrightarrow \,\,
  (V_a \vee \neg V_b \vee \neg (V_c\oplus V_d)). \label{eq:CompatTrinom}
\end{equation}
The formula~(\ref{eq:CompatTrinom}) has a fairly short conjunctive normal form (cnf):
\[
   (V_a \vee \neg V_b \vee V_c \vee \neg V_d)
  \wedge  (V_a \vee \neg V_b \vee \neg V_c \vee V_d)
  \wedge  (\neg V_a \vee V_b \vee V_c \vee V_d)
  \wedge  (\neg V_a \vee V_b \vee \neg V_c \vee \neg V_d).
\]
If the trinomial is mutilated, then we omit from~\eqref{eq:CompatTrinom} all variables which appear no longer.

\begin{example}
Consider the cnf above for the empty gaussoid
$\mathcal{G} = \emptyset $ with $n=4$.  Applying a \textbf{\#SAT}
solver yields the number $ 5376$ for the uniform oriented gaussoids on
$n=4$. \hfill $\diamondsuit$
\end{example}

Here is our main result  on the classification of small oriented gaussoids.

\begin{theorem} \label{thm:oriegaussoidcensus}
For  $n=3,4,5$, the numbers of oriented gaussoids are as follows:
{\rm
\begin{center}
\begin{tabular}{c||ccccccc|}
 $n$ & {\rm ordinary} & {\rm oriented} & {\rm positive} & {\rm uniform} \\
 \hline
 3 & 11 & 51 & 8 & 20 \\
 4 & 679 & 34,873 & 64 & 5,376 \\
 5 & 60,212,776 & 54,936,241,913 & 1,024 & 878,349,984
\end{tabular}
\end{center}
}
\end{theorem}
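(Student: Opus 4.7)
The counts in the ``ordinary'' column coincide with those of Theorem \ref{thm:gaussoidcensus}, so no new argument is needed there. The ``positive'' counts are $2^{\binom{n}{2}} = 8, 64, 1024$, which equal the numbers of undirected graphs on $[n]$; the bijection between positive gaussoids and such graphs is precisely the content of Theorem \ref{thm:posGaussoids} (realizability via graphical models). Hence the remaining work is to count oriented and uniform oriented gaussoids for $n \leq 5$.

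For these two columns my plan is to extend the \textbf{\#SAT} formulation of Lemma \ref{lem:GaussoidEnumWithSAT}. For each element $a \in \mathcal{A}$ I would introduce two Boolean variables: a flag $Z_a$ meaning ``$a \mapsto 0$'' and a sign $V_a$ meaning ``$a \mapsto -1$'', read only when $\neg Z_a$. For each of the $12 \cdot 2^{n-3} \binom{n}{3}$ edge trinomials, after setting principal minors to $+1$, the oriented-gaussoid axiom becomes a Boolean relation on the three $(Z,V)$-pairs involved, which I would encode by case analysis on the $Z$-pattern: when all three $Z$'s are true, no clause is emitted (this is the gaussoid condition already handled by Lemma \ref{lem:GaussoidEnumWithSAT}); when exactly one $Z$ is true, the surviving binomial forces a single sign equation on the two remaining $V$'s; when no $Z$ is true, I impose the four cnf clauses displayed just after (\ref{eq:CompatTrinom}). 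Concatenating these blocks produces a cnf of size $O(n^3 2^n)$ whose satisfying assignments are in bijection with the oriented gaussoids on $[n]$. Feeding it to \texttt{sharpSAT} produces the oriented counts; for the uniform column, I would additionally append the unit clauses $\neg Z_a$ for every $a \in \mathcal{A}$, forcing $\mathcal{G} = \emptyset$, and re-run the counter.

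The main obstacle is scale: at $n = 5$ there are roughly $5.5 \cdot 10^{10}$ oriented gaussoids, which rules out any enumerative approach and makes even model counting demanding in practice. As independent cross-checks I would verify the conditional decomposition
\[
\#\{\text{oriented gaussoids on }[n]\} \,=\, \sum_{\mathcal{G}} \#\{\text{orientations of }\mathcal{G}\},
\]
summing over the gaussoid census of Theorem \ref{thm:gaussoidcensus}, rerun an independently generated cnf encoding in a second solver, and confirm that each count is invariant under the finite group actions of (\ref{eq:fourgroups}).
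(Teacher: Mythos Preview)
Your strategy coincides with the paper's: encode each ternary value by two Boolean bits and hand the resulting cnf to a \#SAT solver; the paper's own proof is essentially those two sentences. There is, however, one genuine slip. With your $(Z_a,V_a)$ encoding, $V_a$ is a don't-care whenever $Z_a$ is true, so an oriented gaussoid with zero set $\mathcal{G}$ is represented by $2^{|\mathcal{G}|}$ distinct satisfying Boolean assignments, and \texttt{sharpSAT} will overcount; the bijection you assert does not hold. The paper deals with exactly this point by forbidding one of the four $(V_a^1,V_a^2)$ configurations, so that the remaining three map bijectively to $\{0,+1,-1\}$. In your notation you must add the clause $\neg Z_a \lor \neg V_a$ for every $a\in\mathcal{A}$. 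A smaller imprecision: after setting $\mathcal{P}$ to $+1$, an edge trinomial has the shape $a - b - c\,d$ and thus involves \emph{four} elements of $\mathcal{A}$, not three; your case split should be on the zero pattern of the three \emph{terms} (one of which is the product $c\,d$), not on ``three $(Z,V)$-pairs''. With these two fixes your plan reproduces the paper's computation, and your handling of the positive and uniform columns is in fact more explicit than the paper's.
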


\begin{proof}
Our count of oriented gaussoids is the result of a \textbf{\#SAT} computation.
Each variable in $\mathcal{A}$ can assume a value in $\{0, \pm 1\}$.
We modeled one such ternary variable with two Boolean variables
$V_a^1, V_a^2$ and a surjection
$\eta: \mathbb{F}_2^2 \to \{0, \pm 1\}$, but forbade one configuration
of $(V_a^1, V_a^2)$ so that $\eta$ becomes a bijection on all allowed
configurations.  Formula~\eqref{eq:CompatTrinom} can be adapted to
describe all oriented gaussoids. The results are summarized
in the table.
\end{proof}

The symmetries of oriented gaussoids differ in two ways from the
symmetries of gaussoids. On the one hand, there are fewer symmetries
coming from the groups in (\ref{eq:twogroups}). The two groups on the right
change the signs of the principal minors of $\Sigma$, so their action on gaussoids
does not lift to oriented gaussoids. Only the action by the
permutation group $S_n$ survives.

 On the other hand, certain new symmetries arise,  namely those given
 by {\em reorientations}. We discussed this point after Remark~\ref{rem:notpreserved}.
 They are analogous to reorientations of oriented matroids \cite[\S~1.2]{OM}.  Reorientations
act on the signs of the almost-principal minors $a_{ij|K}$ as
follows.  If $\phi : \mathcal{A} \rightarrow \{0,\pm 1\}$ is an
oriented gaussoid, and $L$ is any subset of $[n]$, then the {\em
reorientation} of $\,\phi\,$ {\em along} $\,L\,$ is the oriented
gaussoid $\phi_L : \mathcal{A} \rightarrow \{0,\pm 1\}$ given by
$\phi_L(a_{ij|K} ) = (-1)^{|\{i,j\} \cap L|} \cdot \phi(a_{ij|K})$.
There are only $2^{n-1}$ reorientations since $\phi_L =  \phi_{[n] \backslash L}$.
The {\em symmetry classes of oriented gaussoids} are the orbits of
oriented gaussoids under the semidirect product $R_n\rtimes S_n$ 
of the reorientation group $R_n$ and the symmetric group~$S_n$.

\begin{example} \label{ex:ogclass3} Let $n=3$ and consider the $S_3$-orbit of gaussoids
$\bigl\{ \{a_{12}\}, \{a_{13}\}, \{a_{23}\} \bigr\}$.
 Each gaussoid $\{a_{ij}\}$ is the support of four oriented gaussoids that are related by
reorientation. Altogether, this results in a symmetry class of size
$12 = 3 \times 4$.  We display each of these $12$ oriented gaussoids
by listing the six signs for $\mathcal{A}$ in the order
$\,a_{12},a_{13},a_{23}, a_{12|3},a_{13|2}, a_{23|1}$:
\small
\[
  \begin{matrix}
    0----\,- & \qquad 0-++-+ & \qquad 0+-++- & \qquad 0++-+\,+ \\
  -\,0---- & \qquad  +\,0-++- & \qquad  -\,0+-++ & \qquad   +\,0++-+   \\
   --0--\,- & \qquad     -+0-+\,+ & \qquad  +-0+-\,+ & \qquad ++0++\,-
  \end{matrix}
\]
\normalsize
The first oriented gaussoid $\,\,0----\,- \,\,$ is realized by the
symmetric $3 \times 3$-matrix $\Sigma $ with
$ (p_1,p_2,p_3,a_{12},a_{13},a_{23}) = (2,2,2,0,-1,-1)$.
Matrices for the other $11$ oriented gaussoids in this
class are obtained by relabeling and
setting $\Sigma \mapsto D \Sigma D$, where $D = {\rm diag}(\pm 1 , \pm 1 , \pm 1)$.~$\diamondsuit$
\end{example}

\begin{corollary}
For $n = 3$ there are $51$ oriented gaussoids, in $7$ symmetry classes.
These are all realizable. Among them are $20$ uniform oriented gaussoids, in
$3$ symmetry classes.  Among these $20$, there are $8$ positive gaussoids.
\end{corollary}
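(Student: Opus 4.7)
The plan is to combine the counts from Theorem~\ref{thm:oriegaussoidcensus} with an explicit orbit analysis under $R_3\rtimes S_3$ and with matrix constructions for realizability.  The numbers $51$ and $20$ are read off from the $n=3$ row of Theorem~\ref{thm:oriegaussoidcensus}, so what remains is the classification into $7$ (respectively $3$) symmetry classes, the realizability of every oriented gaussoid, and the count of $8$ positive gaussoids, which also appears in the same table.

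First I would organize the oriented gaussoids by their underlying ordinary gaussoid.  Example~\ref{ex:elevengaussoids} partitions the $11$ gaussoids on $[3]$ into five $S_3$-orbits: $\{\emptyset\}$, the orbit of $\{a_{ij}\}$, the orbit of $\{a_{ij|k}\}$, the orbit of four-element gaussoids typified by $\{a_{12},a_{12|3},a_{13},a_{13|2}\}$, and $\{\mathcal{A}\}$.  For each $\mathcal{G}$ in these orbits, substitute $\mathcal{P}\mapsto 1$ and $\mathcal{G}\mapsto 0$ in the twelve edge trinomials of Example~\ref{ex:J3early} and enumerate the sign assignments on $\mathcal{A}\setminus\mathcal{G}$ whose mutilated trinomials are compatible in the sense of the definition of an oriented gaussoid.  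Their contributions $20+12+12+6+1=51$ match Theorem~\ref{thm:oriegaussoidcensus}.

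Next I would apply the $R_3\rtimes S_3$-action.  Reorientation along $L\subseteq[3]$ flips $a_{ij|K}$ exactly when $|L\cap\{i,j\}|$ is odd, giving $2^{n-1}=4$ effective reorientations.  Example~\ref{ex:ogclass3} already exhibits the orbit of $\{a_{ij}\}$ as a single class of size $12=3\cdot 4$; an identical computation gives one class of size $12$ for $\{a_{ij|k}\}$.  Each four-element gaussoid admits exactly $2$ orientations (the free unknowns $a_{23},a_{23|1}$ must receive a common sign, as the mutilated relation $a_{23}-a_{23|1}$ forces after substitution), and the six resulting oriented gaussoids fuse into one orbit of size $6$.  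The gaussoid $\mathcal{A}$ contributes one orbit of size~$1$.  The $20$ uniform oriented gaussoids split into three $R_3\rtimes S_3$-orbits: the $4$-element reorientation class of the all-$+$ pattern, the $4$-element class of the all-$-$ pattern (these are distinct, since for $n=3$ no $L$ has $|L\cap\{i,j\}|$ odd for every pair $\{i,j\}$), and one class of size $12$.  Altogether, $1+1+1+1+3=7$.

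Realizability is then verified class by class by exhibiting one positive definite symmetric $3\times 3$ matrix per orbit, in the manner of Example~\ref{ex:ogclass3}.  The duality $\Sigma\mapsto\Sigma^{-1}$ exchanges the classes of $\{a_{ij}\}$ and $\{a_{ij|k}\}$; the four-element family is self-dual and admits a block-diagonal realization; the class of $\mathcal{A}$ is realized by the identity matrix; and the three uniform classes are realized by small perturbations of the identity with the appropriate off-diagonal sign pattern.  Because $\Sigma\mapsto D\Sigma D$ preserves positive definiteness while realizing any $R_3$-reorientation of a given oriented gaussoid, one matrix per orbit suffices.  Finally, the $8$ positive gaussoids are enumerated directly as the $\{0,+1\}$-valued assignments on $\mathcal{A}$ compatible with the twelve edge trinomials; a short case split on the values of each term shows that exactly $8$ such assignments exist, in bijection with the $2^{\binom{3}{2}}=8$ undirected graphs on $[3]$.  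The main obstacle is the decomposition of the $20$ uniform orientations into the three $R_3\rtimes S_3$-orbits, which requires a careful stabilizer computation in this non-abelian group; every other step reduces to a short enumeration.
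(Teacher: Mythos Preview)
Your proposal is correct and follows essentially the same approach as the paper: the paper proves this corollary by simply exhibiting a table of the seven symmetry classes together with a realizing covariance matrix for each, relying on Theorem~\ref{thm:oriegaussoidcensus} for the raw counts. Your plan reconstructs that table systematically---stratifying by underlying gaussoid, counting orientations via the mutilated edge trinomials, and then collapsing under $R_3\rtimes S_3$---which is exactly the computation the table summarizes. One small caution: when you invoke $\Sigma\mapsto\Sigma^{-1}$ to transport realizability from the $\{a_{ij}\}$ class to the $\{a_{ij|k}\}$ class, note that duality may introduce sign changes in the almost-principal minors; this is harmless here because any such sign discrepancy is absorbed by a reorientation, and you are only claiming realizability of the full $R_3\rtimes S_3$-orbit, but it is worth saying explicitly.
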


The following table exhibits the seven classes. The first column gives
a positive definite symmetric $3 \times 3$-matrix $\Sigma$ that
realizes the first oriented gaussoid in that symmetry class.
\small
\[
\begin{matrix}
\hbox{$(p_1,p_2,p_3,a_{12},a_{13},a_{23})$} & \hbox{Symmetry class of oriented gaussoids}  & {\rm Size} \\
(2, 2, 2, 1, 1, 1) &
+\!+\!+\!+\!+ +,  \,+\!-\!-\!+\!- -,\, -\!-\!+\!-\!- +, \, -\!+\!-\!-\!+ - & 4 \\
(3, 5, 1, 1, 1, 2) &
+\!+\!+\!-\!+ +, \, +\!-\!-\!-\!- -,\, -\!-\!+\!+\!-+, \,\ldots,
 \, -\!-\!+\!-\! -- & 12 \\
(6, 9, 6, -1, -1, -7) &
-\! -\! -\! -\! --,\, +\! +\! -\! +\! +\,-,\, -\! +\! +\!- \! ++, \,+\!-\!+\! +\! -+ & 4 \\
( 4,3  ,3  ,2 ,2 ,1 ) &  + \! +\! +\! +\! +0 ,\, +\! +\! +\! +\! 0 + ,\,
+ \!+ \!+ 0 \!+ \!+ ,\,\ldots\,,\, -\! -\! + \!-\! - 0
 & 12 \\
 (2, 2, 2, 0, -1, -1) & 0\! -\! -\! -\! --, \, 0\! -\! +\! +\!- +,\,
\ldots  \qquad \hbox{(Example \ref{ex:ogclass3})} & 12 \\
(3, 2, 2, 0, 0, 1) &
\,\, 00\!+\!00+, 00\!-\!00-, -00\!-\!00, +00\!+\!00, 0\!-\!00\!-\!0,  0\!+\!00\!+\!0 & 6 \\
(1, 1, 1, 0, 0, 0) &
000000 & 1
\end{matrix}
\]
\normalsize
See Theorem~\ref{thm:classifyOriented4} for the classification in the $n=4$ case.

We now shift gears and focus on positive gaussoids. In analogy to the
situation for positroids~\cite{ARW}, all positive gaussoids are
realizable and their realization spaces are very nice.

Let $\Gamma = ([n],E)$ be an undirected simple graph with vertex set
$[n] = \{1,2,\ldots,n\}$. This defines a gaussoid $\mathcal{G}_\Gamma$
by taking all the conditional independence statements that hold for
the graphical model~$\Gamma$.  To be precise, an unknown $a_{ij|K}$
lies in $\mathcal{G}_\Gamma$ if and only if every path from vertex $i$
to vertex $j$ in $\Gamma$ uses at least one of the vertices in~$K$.
Thus $a_{ij} \in \mathcal{G}_\Gamma$ when $i$ and $j$ are in separate
connected components of $\Gamma$, and
$a_{ij|[n]\backslash \{i,j\}} \in \mathcal{G}_\Gamma$ when $\{i,j\} \not\in E$.

\begin{theorem}\label{thm:posGaussoids}
Fix a positive integer $\,n$.  There are exactly $\,2^{\binom{n}{2}}$
positive gaussoids $\mathcal{G}_\Gamma$,  one for each of the graphs
$\Gamma = ([n],E)$.  These gaussoids are all realizable.  The space of
covariance matrices $\Sigma$ that realize $\mathcal{G}_\Gamma$ is
homeomorphic to an open ball of dimension~$|E|+n$.
\end{theorem}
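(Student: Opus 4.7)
The plan is to establish a bijection $\Gamma \leftrightarrow \mathcal{G}_\Gamma$ between simple graphs on $[n]$ and positive gaussoids, and simultaneously to parameterize the realization space of each $\mathcal{G}_\Gamma$ by a convex open set of concentration matrices. The count $2^{\binom{n}{2}}$ then drops out from the number of graphs, and the ball structure from the convexity.

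First I would check that each $\mathcal{G}_\Gamma$ is a positive gaussoid. It is a gaussoid by the graphoid properties of undirected separation, and compatibility with the edge trinomial $p_{iL}a_{jk|L} - p_L a_{jk|iL} - a_{ij|L}a_{ik|L}$ under the all-$+1$ assignment outside $\mathcal{G}_\Gamma$ reduces to a combinatorial identity about separators: if $L$ separates $j$ from $k$ in $\Gamma$ then $L \cup \{i\}$ also does, and $i$ lies in the same connected component as at most one of $j,k$ in $\Gamma \setminus L$, so $L$ separates $i$ from at least one of $j,k$; if $L$ does not separate $j,k$, then any $j$-$k$ path in $\Gamma \setminus L$ either avoids $i$ (so $L \cup \{i\}$ still fails to separate) or passes through $i$ (so $L$ fails to separate both pairs $\{i,j\}$ and $\{i,k\}$). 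For realizability, introduce
\[
C_\Gamma \,=\, \{K \in \RR^{n \times n} \,:\, K = K^\top \succ 0,\ K_{ij} = 0 \text{ for } \{i,j\} \notin E,\ K_{ij} < 0 \text{ for } \{i,j\} \in E\}.
\]
For any $K \in C_\Gamma$, the matrix $\Sigma = K^{-1}$ is an ${\rm MTP}_2$ Gaussian covariance by the Karlin--Rinott criterion, so by the results of \cite{fallat,LUZ} every partial correlation of $\Sigma$ is nonnegative, vanishing precisely when the conditioning set separates in $\Gamma$ (global Markov property). Hence $\Sigma$ realizes $\mathcal{G}_\Gamma$ as a positive gaussoid.

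To pin down the realization space, note that for any realization $\Sigma$ the CI statements $a_{ij|[n]\setminus\{i,j\}} \in \mathcal{G}_\Gamma$ for non-edges $\{i,j\} \notin E$ force $\Sigma^{-1}_{ij} = 0$, while strict positivity of $\rho_{ij|[n]\setminus\{i,j\}}$ for edges forces $\Sigma^{-1}_{ij} < 0$. Thus the realization space is exactly $\{K^{-1} : K \in C_\Gamma\}$. Since $C_\Gamma$ is the intersection of the positive-definite cone with a linear subspace of dimension $|E|+n$ and with open half-spaces, it is a convex open subset of an $(|E|+n)$-dimensional vector space, and so homeomorphic to an open ball of that dimension; matrix inversion transports this homeomorphism to the realization space.

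The remaining and most delicate task is to show every positive gaussoid $\mathcal{G}$ equals some $\mathcal{G}_\Gamma$, which together with the obvious injectivity of $\Gamma \mapsto \mathcal{G}_\Gamma$ on pairwise statements yields the count $2^{\binom{n}{2}}$. Given $\mathcal{G}$, set $E := \{\{i,j\} : a_{ij|[n]\setminus\{i,j\}} \notin \mathcal{G}\}$ and let $\Gamma = ([n], E)$. The essential algebraic input is a monotonicity forced by positive compatibility with the edge trinomial: if $a_{jk|L} \in \mathcal{G}$ then the first term vanishes, and since no $+1$ can appear, the other two terms must also vanish, giving simultaneously $a_{jk|iL} \in \mathcal{G}$ (upward closure in the conditioning set) and $\{a_{ij|L}, a_{ik|L}\} \cap \mathcal{G} \neq \emptyset$. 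Iterating these two consequences should match the recursive description of graph separation in $\Gamma$. The main obstacle will be taming the disjunctive second conclusion; a clean proof will likely proceed by induction on $n$ via the marginal and conditional gaussoids $\mathcal{G} \backslash u$ and $\mathcal{G}/u$, or by a direct geometric argument showing that any positive gaussoid admits a realization whose concentration matrix is an M-matrix, at which point \cite{LUZ} identifies the realization as ${\rm MTP}_2$ and the graph is read off the sparsity pattern of $\Sigma^{-1}$.
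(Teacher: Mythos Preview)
Your treatment of the first two parts --- that each $\mathcal{G}_\Gamma$ is a positive gaussoid, that it is realizable via inverse M-matrices, and that the realization space is identified with the convex open set $C_\Gamma$ of dimension $|E|+n$ --- is essentially the paper's argument. One minor difference: the paper deduces that $\mathcal{G}_\Gamma$ is a positive gaussoid \emph{from} the explicit realization (faithfulness of the ${\rm MTP}_2$ model, \cite[Theorem~6.1]{fallat}), rather than by your direct combinatorial verification of compatibility with the edge trinomials. Both work; yours is more self-contained, the paper's is shorter.

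The genuine gap is in your third part. You correctly extract from positive compatibility with the edge trinomial that $a_{jk|L} \in \mathcal{G}$ forces both $a_{jk|iL} \in \mathcal{G}$ (upward stability) and $\{a_{ij|L},a_{ik|L}\} \cap \mathcal{G} \neq \emptyset$, but then you stop, speculating about induction or a geometric detour through M-matrices. Neither is needed, and neither is what the paper does. The paper closes the argument in one line by invoking Sadeghi's characterization \cite[Theorem~1 and Corollary~7]{sadeghi}: a conditional independence structure equals $\mathcal{G}_\Gamma$ for some graph $\Gamma$ if and only if it is a \emph{singleton-transitive compositional graphoid}, which in turn (given the semigraphoid axiom) reduces to verifying three properties: intersection, singleton-transitivity, and upward stability. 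The first two are exactly the gaussoid axioms \ref{it:G2} and \ref{it:G4}, and the third is precisely the upward-closure consequence you already derived. So you are one citation away from a complete proof; the ``delicate task'' you anticipate dissolves once you know Sadeghi's result.
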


\begin{proof} We first show that $\mathcal{G}_\Gamma$ supports a
positive gaussoid.  Our argument follows
\cite[Proposition~6.3]{fallat}.  Let $A = (a_{ij})$ be the adjacency
matrix of~$\Gamma$, with $a_{ij} = 1$ if $\{i,j\} \in E$ and
$a_{ij} = 0$ otherwise.  Take $\Sigma = (t \cdot {\rm Id}_n - A)^{-1}$
for sufficiently large $t > 0$.  Then $\Sigma$ is positive definite
and all its almost-principal minors are nonnegative.  Indeed,
$\Sigma^{-1}$ is an {\em M-matrix}, i.e.~it is
a positive definite matrix
whose off-diagonal entries are nonpositive.
 By \cite[Theorem~2]{karlin}, all
partial correlations of the associated Gaussian distribution are
nonnegative.  Following \cite{LUZ}, this is precisely what it means
for a distribution to be ${\rm MTP}_2$.  Hence, all
$a_{ij|K}$ are nonnegative  for our matrix $\Sigma$.  Moreover, by \cite[Theorem~6.1]{fallat},
the distribution given by $\Sigma$ is faithful to the graph $\Gamma$,
i.e.~a principal minor $a_{ij|K}$ is zero if and only if it lies
in~$\mathcal{G}_\Gamma$.

Using the same line of reasoning, we can show that the realization
space of $\mathcal{G}_\Gamma$ is homeomorphic to an open ball of
dimension~$|E|+n$. Indeed, if $\Sigma$ is any covariance matrix with
gaussoid $\mathcal{G}_\Gamma$, then $\Sigma^{-1}$ is an M-matrix with
support $\Gamma$.  The set of all such matrices is a (relatively open) convex cone of
dimension $|E|+n$.  It is the face indexed by $\Gamma$
of the cone of all M-matrices. That cone is denoted by
$\mathcal{M}^p$ in \cite[\S~2]{LUZ}. It has dimension
 $\binom{n}{2}+n$, and it is open in the ambient space of symmetric matrices.
 Note that the cone $\mathcal{M}^p$
is the realization space of the
strictly positive gaussoid $+\!+\!+\!+ \, \cdots \, + $, for the
complete graph $\Gamma = K_n$.

Matrix inversion
defines a homeomorphism from the aforementioned relatively open face
of $\mathcal{M}^p$ onto a subset of
$\RR^{\mathcal{P} \cup \mathcal{A}}$ that is topologically a ball of
dimension $|E|+n$. Its image in the positive part of the variety
$V(J_n)$ is a semialgebraic stratum of dimension $|E|+n$.

To complete the proof, let us now assume that $\mathcal{G}$ is an
arbitrary positive gaussoid. A~priori we do not know that
$\mathcal{G}$ is realizable.  We must prove that $\mathcal{G}$ equals
$\mathcal{G}_\Gamma$ for some graph $\Gamma = ([n],E)$.  By
\cite[Theorem~1]{sadeghi}, it suffices to check that $\mathcal{G}$ is
a singleton-transitive compositional graphoid.  Equivalently, by
\cite[Corollary~7]{sadeghi}, we must verify that the edge trinomials
imply the three axioms {\em singleton-transitivity}, {\em
intersection}, and {\em upward-stability}.  Singleton-transitivity is
equivalent to the gaussoid axiom~\ref{it:G4}.  Intersection
is~\ref{it:G2} and thus these two axioms hold for~$\mathcal{G}$.
Upward stability says that $a_{ij|L} \in \mathcal{G}$ implies
$a_{ij|kL} \in \mathcal{G}$.  This follows from the trinomial
$a_{ij|L} p_{kL} - a_{ik|L} a_{jk|L} - a_{ij|kL} p_L $ in $ J_n$.
Indeed, since $p_{kL}$ and $p_L$ are positive and the middle product is nonnegative,
we see that $a_{ij|L} = 0$ implies  $ a_{ij|kL}  = 0$.
\end{proof}

\begin{remark}
The oriented gaussoids that result from positive ones by reorientation
correspond to {\em signed ${\rm MTP}_2$ distributions}.  We refer to
\cite[\S~5]{LUZ} and the references given there.
\end{remark}

\section{Quadratic Relations}\label{s:quadrics}

In this section we return to the ideal $J_n$ of relations among
principal and almost-principal minors, and we derive a conjectural
characterization of its minimal generators.  We begin by discussing
the extent to which the trinomials suffice to generate.  Let
$T_n $ denote the ideal in  $\RR[\mathcal{P} \cup \mathcal{A}]$ that is generated
by all square trinomials \eqref{eq:trinomial2face} and all edge trinomials~\eqref{eq:trinomialedge}.

\begin{example} The ideal $T_3$ is generated by $18 = 6+12$ quadratic
trinomials, displayed in Example~\ref{ex:J3}.  It is
radical and its prime decomposition has five components:
\[
T_3 \,\, = \,\, J_3 \,\cap \,P_{\emptyset,123}  \,\cap \,
P_{1,23}  \,\cap \,P_{2,13} \,\cap \, P_{3,12}.
\]
Each associated prime $P_{I,J}$ is generated by $12$ of the $14$
unknowns in $\mathcal{P} \cup \mathcal{A}$.  The two unknowns not in
$P_{I,J}$ are $p_I$ and $p_J$.  The variety $V(P_{I,J})$ is a coordinate line
$\PP^1 $ in $\PP^{13}$.  \hfill $\diamondsuit$
\end{example}

We show that $T_n$ becomes equal to the prime
ideal~$J_n$ after inverting all unknowns in~$\mathcal{P}$.

\begin{proposition} \label{prop:intorus} The ideal $J_n$ is an
associated prime of the trinomial ideal~$T_n$.  Every other associated prime
of $\,T_n$ contains at least one of the $2^n$ unknowns $p_I \in \mathcal{P}$.
\end{proposition}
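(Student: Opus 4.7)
My plan is to prove the stronger localization identity $T_n \cdot R_U = J_n \cdot R_U$, where $R_U$ denotes the localization of $R = \RR[\mathcal{P}\cup\mathcal{A}]$ at the multiplicative set generated by $\{p_I : I \subseteq [n]\}$. Both conclusions of the proposition then follow from standard commutative algebra: since no $p_I$ lies in $J_n$ (all principal minors take value $1$ at $\Sigma = {\rm Id}_n$, a point of $V(J_n)$), the ideal $J_n R_U$ is a prime of $R_U$ with contraction $J_n$. Hence $J_n$ appears as an associated prime of $T_n$, and every other associated prime must meet the multiplicative set and therefore contain some $p_I$.

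The inclusion $T_n \subseteq J_n$ is immediate because every square and edge trinomial lies in $J_n$. For the reverse inclusion in $R_U$, I will show that the canonical surjection $R_U/T_n R_U \twoheadrightarrow R_U/J_n R_U$ is an isomorphism by exhibiting both sides as integral domains of Krull dimension $\binom{n+1}{2}+1$; the target has this dimension because $V(J_n)$ is irreducible of projective dimension $\binom{n+1}{2}$, so its affine cone is one-dimensional higher.

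For the source, I would use the trinomials to set up an explicit parametrization. The square trinomial $a_{ij|L}^2 - p_{iL}p_{jL} + p_L p_{ijL}$ rearranges to $p_{ijL} \equiv p_L^{-1}\bigl(p_{iL}p_{jL} - a_{ij|L}^2\bigr) \pmod{T_n R_U}$, and the edge trinomial $p_L a_{jk|iL} - p_{iL}a_{jk|L} + a_{ij|L}a_{ik|L}$ gives $a_{jk|iL} \equiv p_L^{-1}\bigl(p_{iL}a_{jk|L} - a_{ij|L}a_{ik|L}\bigr)$. By simultaneous induction on $|I|$ and $|K|$, every $p_I$ with $|I|\geq 2$ and every $a_{jk|K}$ with $|K|\geq 1$ can thus be written modulo $T_n R_U$ as a Laurent polynomial in the $\binom{n+1}{2}+1$ base variables $p, p_1, \ldots, p_n, a_{12}, \ldots, a_{n-1,n}$ and the inverses of previously constructed $p_I$'s.

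To conclude that $R_U/T_n R_U$ is a localization of a polynomial ring on these base variables --- and hence a domain --- it remains to check their algebraic independence modulo $T_n R_U$. This can be deduced from independence in the target $R_U/J_n R_U$, which is visible through the identification $(R/J_n)[p^{-1}]\cong \RR[\sigma_{ij}][p,p^{-1}]$ on the affine chart $D(p)$ of the cone over $V(J_n)$: the base variables map to $p$ together with the free generators $\sigma_{ij}$ of a polynomial ring. The main obstacle in executing this strategy is the recursive bookkeeping, since different choices of the distinguished index $i$ in the edge trinomial (or of the pair in the square trinomial) give competing formulas for the same $a_{jk|K}$ or $p_I$, and their agreement modulo $T_n R_U$ is not obvious a priori. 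However, once algebraic independence of the base variables in the target is in hand, consistency in the source is forced, and the surjection of equal-dimensional domains must be an isomorphism, completing the proof.
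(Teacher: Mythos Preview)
Your proposal is correct and follows essentially the same route as the paper: localize at all the $p_I$, use the square and edge trinomials recursively to express every $p_I$ and $a_{ij|K}$ in terms of the $\binom{n+1}{2}+1$ base variables $p,p_1,\ldots,p_n,a_{12},\ldots,a_{n-1,n}$, and conclude that $T_nR_U=J_nR_U$ is prime. The paper simply asserts the isomorphism $R_U/T_nR_U\simeq R_U/J_nR_U\simeq \RR[p^{\pm1},p_1^{\pm1},\ldots,p_n^{\pm1},a_{12},\ldots,a_{n-1,n}]$ directly, whereas you unpack this by first lifting algebraic independence of the base variables from $R_U/J_nR_U$ to $R_U/T_nR_U$ and then invoking the equal-dimensional-domain surjection argument; the ``consistency'' issue you flag is, as you observe, automatically resolved once independence is in hand.
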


\begin{proof}
Let $R = \RR[\mathcal{A} \cup \mathcal{P}^{\pm 1}]$ denote the partial
Laurent polynomial ring obtained  from $\RR[\mathcal{A} \cup \mathcal{P}]$
by adjoining $p_I^{-1}$ for all $I \subseteq [n]$. Consider the ideal
$T_n R$ in $R$.  Modulo this ideal,
\[
  p_{ijK} \,=\,  p_{iK} p_{jK} p_K^{-1}\, - \,a_{ij|K}^2 p_K^{-1}
  \quad
  \hbox{and}
  \quad
  a_{ij|kL} \,=\, p_{kL} a_{ij|L} p_L^{-1} \, - \, a_{ik|L}
 a_{jk|L} p_L^{-1} .
\]
These relations express each principal or almost-principal minor
of size $\geq 2$ as a Laurent polynomial in the entries of the symmetric matrix $\Sigma$.
This shows that $R/T_n R $ is isomorphic to a partial Laurent polynomial ring in $\binom{n+1}{2}+1$ unknowns.
The same reduction argument works for the ideal $J_n$. In symbols, we have
the following isomorphism of $\RR$-algebras:
\[
  R/T_n R \,\simeq \,R/J_n R \,\simeq\, \RR\bigl[\,p^{\pm 1}, p_1^{\pm 1},p_2^{\pm 1},\ldots, p_n^{\pm 1},
  a_{12}, a_{13}, \ldots, a_{n-1,n} \bigr].
\]
We conclude that $T_n R $ equals the prime ideal $J_n R$ in $R$, and this proves the assertion.
\end{proof}

In Theorem~\ref{thm:quadricgens} we describe all quadrics in the ideal
$J_n$. We believe that these generate~$J_n$.

\begin{conjecture} \label{conjgens}
The ideal $J_n$ is generated by its quadrics, listed explicitly in
Theorem~\ref{thm:quadricgens}.
\end{conjecture}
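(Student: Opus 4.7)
The plan is to combine Proposition~\ref{prop:intorus} with a representation-theoretic saturation argument in order to reduce the conjecture to a statement about the behavior of $J_n$ along the coordinate subspaces $\{p_I = 0\}$. Let $Q_n \subseteq J_n$ denote the ideal generated by the quadrics listed in Theorem~\ref{thm:quadricgens}. Since $Q_n$ contains the trinomial ideal $T_n$, Proposition~\ref{prop:intorus} implies that $Q_n$ and $J_n$ agree after inverting $p = \prod_{I} p_I$. Hence the quotient module $J_n/Q_n$ is supported on $V(p)$, and to prove $Q_n = J_n$ it suffices to show that no minimal generator of $J_n$ sits in degree $\geq 3$.

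First I would verify degree two, i.e.~that Theorem~\ref{thm:quadricgens} indeed enumerates $(J_n)_2$. Using the character formula \eqref{eq:formalcharW} and the $SL_2$-character calculus of~\eqref{eq:irredchar}, one decomposes ${\rm Sym}_2(W)$ into irreducible $G \rtimes S_n$-modules and then identifies the submodule $(J_n)_2$ as the kernel of the evaluation map ${\rm Sym}_2(W) \to \RR[\Sigma]_2$.  This is exactly how Examples~\ref{ex:J3} and~\ref{ex:J4} were handled, and the pattern there suggests an $n$-uniform bookkeeping by weight.

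For degrees $d \geq 3$, I would attempt a Gr\"obner-basis lift from the Lagrangian Grassmannian. Recall that the quadratic Pl\"ucker relations for ${\rm LGr}(n,2n)$ form a Gr\"obner basis; the projection ${\rm LGr}(n,2n) \dashrightarrow V(J_n)$ is a coordinate projection, birational by the proof of Proposition~1.1. I~would choose a weight order on $\RR[\mathcal{P} \cup \mathcal{A}]$ that is compatible with the $(\ZZ/4\ZZ)^n \rtimes S_n$-symmetry and with Hiller's standard monomial order on ${\rm LGr}(n,2n)$, and then check that $\mathrm{in}(Q_n) = \mathrm{in}(J_n)$. Comparing Hilbert series via Hiller's formula gives a numerical check; equality of the Hilbert series combined with $Q_n \subseteq J_n$ and primality of $J_n$ would conclude the argument.

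The hard part will be precisely this last step: passing from quadratic generation on ${\rm LGr}(n,2n)$ to quadratic generation on the projected variety $V(J_n)$. The projection $\pi$ is birational but not a closed immersion, and the fibers of $\pi$ over $\{p_I = 0\}$ can a priori contribute higher-degree syzygies that survive in $J_n$ even though they are quadratic on ${\rm LGr}(n,2n)$. Controlling this either requires a Koszul-type argument for $V(J_n)$ (which would imply quadratic generation directly) or an explicit analysis of the Rees algebra of $\pi$ along each stratum $\{p_I = 0\}$. A reasonable intermediate step is to verify the conjecture for $n = 5,6$ by computer (the support argument above confines any putative higher-degree minimal generator to the ``boundary'' locus $V(p)$, which tames the SAT/Gr\"obner computation), and to use the resulting data to organize the $G \rtimes S_n$-orbits that need to be ruled out in the inductive step from $n$ to $n+1$.
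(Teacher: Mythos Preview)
The statement you are attempting to prove is labeled a \emph{Conjecture} in the paper, and the paper does \emph{not} prove it. What the paper does prove is the weaker scheme-theoretic version (the Proposition immediately following Conjecture~\ref{conjgens}): the quadrics in $J_n$ cut out $V(J_n)$ as a scheme, i.e.\ $\langle (J_n)_2\rangle$ and $J_n$ agree in all sufficiently large degrees. The paper's argument for that weaker statement is quite different from yours: it covers both schemes by the affine charts $D(p_I)$, uses the $G={\rm SL}_2(\RR)^n$-action to reduce to the single chart $D(p_\emptyset)$, and then shows directly that after setting $p_\emptyset=1$ every variable $p_{ijL}$ or $a_{ij|kL}$ corresponding to a minor of size $\geq 2$ can be eliminated using a dehomogenized quadric (obtained by lowering a square or edge trinomial). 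This gives equality of the dehomogenized ideals on each chart, hence equality of schemes, but it says nothing about the saturation step needed for ideal-theoretic equality.

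Your proposal does not close that gap either, and you essentially say so yourself. The sentence ``to prove $Q_n=J_n$ it suffices to show that no minimal generator of $J_n$ sits in degree $\geq 3$'' is a restatement of the conjecture, not a reduction. The Gr\"obner-basis lift from ${\rm LGr}(n,2n)$ is the substantive idea, but it is not clear how to make it work: the map ${\rm LGr}(n,2n)\to V(J_n)$ is a linear projection (eliminating the non-principal, non-almost-principal Pl\"ucker coordinates), and elimination does not in general preserve quadratic generation or a quadratic Gr\"obner basis. Your proposed Hilbert-series comparison would require knowing the Hilbert series of $Q_n$ independently, which is essentially the point at issue. So what you have is a reasonable research plan with an honestly identified obstruction, not a proof; and that obstruction is exactly why the statement remains a conjecture in the paper.
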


At present we can only show that this conjecture holds {\em scheme-theoretically}, i.e.~the ideal
generated by all quadrics in $J_n$ agrees with the homogeneous prime ideal $J_n$  in all sufficiently large degrees.
The following proof of this result was suggested to us by  Mateusz Micha{\l}ek.

\begin{proposition}
The projective variety $V(J_n)$ of principal and almost-principal minors of symmetric 
$n \times n$-matrices is defined scheme-theoretically by the quadrics in its ideal $J_n$.
\end{proposition}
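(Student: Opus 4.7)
The strategy is to descend the classical fact that the Lagrangian Grassmannian $X := {\rm LGr}(n,2n) \subset \PP^{\binom{2n}{n}-1}$ is cut out scheme-theoretically by its quadrics---these are the quadratic Pl\"ucker relations, which in fact form a Gr\"obner basis as recalled in the introduction---along the base-point-free projection $\pi$ established in the first proposition of the paper. Writing $Q \subseteq J_n$ for the ideal generated by the quadrics in $J_n$, the inclusion $Q \subseteq J_n$ yields $V(J_n) \subseteq V(Q)$ as schemes automatically, so the task is to establish the reverse inclusion scheme-theoretically.

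To carry out the descent, I would cover $V(J_n)$ by the affine opens $U_I := \{p_I \neq 0\}$ for $I \subseteq [n]$. That they indeed cover $V(J_n)$ follows from the square trinomials $a_{ij|K}^2 + p_K p_{ijK} - p_{iK}p_{jK}$ in $J_n$: setting every $p_L = 0$ forces every $a_{ij|K}^2 = 0$, so any hypothetical point would be the origin, not a projective point. The group $(\ZZ/4\ZZ)^n \rtimes S_n$ from Section~\ref{s:symmetry} preserves both $J_n$ and $Q$ while permuting the $p_I$'s transitively, so it is enough to show $Q = J_n$ as ideals on the single chart $U_\emptyset = \{p \neq 0\}$. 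There, after dehomogenization by $p = 1$, the projection $\pi$ restricts to an isomorphism of the standard affine chart $\mathrm{Sym}_n$ of $X$ onto $U_\emptyset$ with inverse $(p_I, a_{ij|K}) \mapsto (\sigma_{ii} = p_i,\,\sigma_{ij} = a_{ij})$, so $J_n|_{p=1}$ is generated by the substitution relations $p_I - f_I(\sigma)$ and $a_{ij|K} - g_{ij|K}(\sigma)$ expressing each higher minor in terms of the base coordinates $\{p_i, a_{ij}\}$.

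I would then prove by induction on $|I|$ and $|K|$ that each substitution relation lies in $Q|_{p=1}$. The base cases are handled by the square trinomials $a_{ij}^2 + p\,p_{ij} - p_i p_j$ and the edge trinomials $p\,a_{jk|i} - p_i a_{jk} + a_{ij}a_{ik}$. For the inductive step, one uses weight-zero quadrics in $(J_n)_2$ whose leading monomial under $p = 1$ is the variable being expressed---for instance the quadric $2 a_{jk} a_{jk|i} + p\,p_{ijk} + p_i p_{jk} - p_j p_{ik} - p_{ij} p_k$ from Example~\ref{ex:J3} yields $p_{ijk}$ after substituting the lower-order expressions already constructed, and analogous quadrics drawn from the irreducible $G$-summands of $(J_n)_2$ identified in Section~\ref{s:symmetry} handle larger index sets.

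The principal obstacle is the inductive step in full generality: one must verify that $(J_n)_2$ contains, for every $p_I$ with $|I| \geq 3$ and every $a_{ij|K}$ with $|K| \geq 2$, a quadric whose dehomogenization at $p = 1$ isolates the variable in question. Existence of these quadrics ought to follow from the quadratic generation of $I_X$ together with the fact that $p$ is a nonzerodivisor modulo $I_X$, but converting this abstract existence into an explicit uniform recursion valid for all $n$ is the technical heart of the proof.
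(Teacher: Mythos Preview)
Your strategy is essentially the paper's: cover by the charts $D(p_I)$ using the square trinomials, use the $G$-symmetry to reduce to $D(p_\emptyset)$, and there express every higher minor in terms of the $1$-minors by means of quadrics in $J_n$. Two remarks sharpen the proposal into a complete proof.

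First, a small logical point: you need the charts $D(p_I)$ to cover $V(Q)$, not just $V(J_n)$, since it is on $V(Q)$ that you must verify the scheme-theoretic equality. This is immediate because the square trinomials already lie in $Q$, so the same argument applies; but the quantifier matters.

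Second, the ``principal obstacle'' you flag---producing, for each $p_{ijL}$ and each $a_{ij|kL}$, a quadric in $(J_n)_2$ containing the monomial $p_\emptyset \cdot p_{ijL}$ (respectively $p_\emptyset \cdot a_{ij|kL}$) with all other terms involving only minors of strictly smaller size---is resolved directly and uniformly by the \emph{lowering operators} $\ell_k$ of Section~\ref{s:quadrics}, rather than by appealing abstractly to quadratic generation of $I_X$. Applying $\ell_{k_1}\cdots\ell_{k_m}$ (with $L = \{k_1,\ldots,k_m\}$) via the Leibniz rule to the square trinomial $p_\emptyset p_{ij} - p_ip_j + a_{ij}^2$ yields a quadric in $(J_n)_2$ whose expansion contains the term $p_\emptyset p_{ijL}$ (from distributing all lowerings onto $p_{ij}$), while every other term is a product of minors of size at most $|L|+1$. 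The same procedure applied to the edge trinomial $p_\emptyset a_{ij|k} - p_k a_{ij} + a_{ik}a_{jk}$ produces a quadric containing $p_\emptyset a_{ij|kL}$. The example you quote from Example~\ref{ex:J3} is precisely $\ell_i$ applied to $p_\emptyset p_{jk} - p_jp_k + a_{jk}^2$, so you had the right objects in hand; the missing step was recognizing that iterated lowering manufactures the required quadric for \emph{every} index set, making the induction go through without further case analysis.
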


\begin{proof}
Let $V = \RR^{2^{n-2}(4 + \binom{n}{2})}$ and let $\PP(V)$ be the projective space whose coordinates are $\mathcal{P} \cup \mathcal{A}$. Consider the two subschemes $X, \hat{X} \subset \PP(V)$ defined respectively by $J_n$ and $\hat{J}_n := \langle(J_n)_2\rangle$, the ideal generated by the quadratic part of $J_n$. By construction, $X \subseteq \hat{X}$. Our goal is to prove that equality holds. To do this, first note that both subschemes are contained in $\bigcup_{I \subseteq [n]} D(p_I)$,
where $D(p_I) = \{ p_I \not= 0 \}$ is the affine chart given by the principal minor $p_I$.
 Indeed, since both ideals $J_n$ and $\hat{J}_n$ contain the square trinomials 
 $a_{ij|K}^2 + p_Kp_{ijK} - p_{iK}p_{jK}$, there is no 
 closed point of either subscheme whose $p$-coordinates are all equal zero.

The action of ${\rm SL}_2(\RR)^n$ induces canonical isomorphisms $D(p_I) \cap X \cong D(p_{\emptyset}) \cap X$ and $D(p_I) \cap \hat{X} \cong D(p_{\emptyset}) \cap \hat{X}$ for every $I \subseteq [n]$. 
It is hence enough to prove that the affine schemes $D(p_{\emptyset}) \cap X $
and $ D(p_{\emptyset}) \cap \hat{X}$ are equal. These affine schemes are
defined by the ideals obtained from $J_n$ and $\hat{J}_n$ by setting $p_\emptyset = 1$.
We claim that these two dehomogenized ideals are equal.

The $1$-minors of $\Sigma$
are algebraically independent modulo $J_n|_{p_\emptyset = 1}$.
It is then enough to show that all variables corresponding to minors of $\Sigma$ of size two or higher can be rewritten in terms of
the $1$-minors by dehomogenizing certain quadrics in $J_n$. One sees this as follows.

Given $i$, $j \in [n]$ and $L \subseteq [n]\backslash \{i, j\}$, consider the variable $p_{ijL}$, which is associated with a principal $(|L|+2)$-minor of $\Sigma$. Lowering the square trinomial 
$p_{\emptyset}p_{ij} - p_ip_j + a_{ij|\emptyset}^2$ by the index set $L$ yields a quadric
that contains $p_{\emptyset}p_{ijL}$ and whose other terms involve only variables corresponding to minors of $\Sigma$ of size $|L|+1$ and lower. (Here, by lowering we mean the application of the lowering operators $\ell_k$ that are defined after the statement of Theorem \ref{thm:quadricgens} below.) The analogous statement for the variable $a_{ij|kL}$ corresponding to an almost-principal minor is obtained by lowering the edge trinomial $p_{\emptyset}a_{ij|k} - p_ka_{ij|\emptyset} + a_{ik|\emptyset}a_{jk|\emptyset}$ by the index set $L \subseteq [n] \backslash \{i, j, k\}$. Thus, on $D(p_\emptyset)$, we can use quadrics in $J_n$
to rewrite every larger minor as a polynomial in the $1$-minors. Thus
$J_n|_{p_\emptyset = 1}$ is generated by dehomogenized quadrics.
\end{proof}

\begin{theorem} \label{thm:quadricgens} The space of all quadrics in the ideal $J_n$ is a $G$-module of dimension
\begin{equation}
\label{eq:magicdim}
{\rm dim}\bigl( (J_n)_2 \bigr) \quad = \quad
3^{n-2}\binom{n}{2} \,+\,\, 4\sum_{m=3}^{n}3^{n-m}\binom{n}{m}\binom{m}{2} 
\, +\, \,\sum_{k=2}^{\lfloor{\frac{n}{2}}\rfloor}2k \cdot 3^{n-2k}\binom{n}{2k}.
\end{equation}
The following four classes of quadrics and their images under $S_n$
are the highest weight vectors for the distinct irreducible
representations occurring in the $G$-module $(J_n)_2$:
\begin{gather}
p_{12}p \, -\, p_1p_2 \,+\, a_{12}^2 \tag{$i$}\label{q1}\\
\sum_{L \subseteq [m] \backslash \{1, 2\}} \!\!\!\! (-1)^{|L|}p_La_{12|L^c} + \sum_{j=3}^m
\sum_{K \subseteq [m] \backslash \{1, 2, j\}} \!\!\! \!\! (-1)^{|K|}a_{1j|K}a_{2j|K^c} \quad
\hbox{\rm for $\,3 \leq m \leq n$, $\,m$ odd}
\tag{$ii$}\label{q2}\\
\end{gather}
\begin{gather}
\sum_{j=3}^m\sum_{K \subseteq [m] \backslash \{1, 2, j\}} \!\!\!\! (-1)^{|K|}a_{1j|K}a_{2j|K^c} \quad
\hbox{\rm for $\,4 \leq m \leq n$, $\,m$ even} \tag{$iii$}\label{q3}\\
\sum_{\substack{(L,L') \,{\rm partition} \\ {\rm  of }\,[m]}} \!\!\!\!\! (-1)^{|L|}p_Lp_{L'} \,\,+\,\,
2 \cdot \sum_{j=2}^m \sum_{\substack{(K, K')\, {\rm partition} \\ {\rm of }\,[m] \backslash \{1,j\}}} \!\!\! \!\!
(-1)^{|K|}a_{1j|K}a_{1j|K'} \,\,\, \, \hbox{\rm for $3 < m \leq n$, $m$ even}.
\tag{$iv$}\label{q4}
\end{gather}
\end{theorem}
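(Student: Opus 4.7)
The plan is to decompose $(J_n)_2$ into irreducible $G$-modules using highest weight theory. First I would compute the character of ${\rm Sym}_2(W)$ via the plethysm $\chi({\rm Sym}_2 W) = \tfrac{1}{2}(\chi(W)(x)^2 + \chi(W)(x^2))$, with $\chi(W)$ given by~\eqref{eq:formalcharW}, and expand in the basis of characters \eqref{eq:irredchar} of irreducibles $S_{d_1\cdots d_n}$. This identifies which $S_\lambda$ can possibly appear in the submodule $(J_n)_2$ and provides an upper bound on multiplicities. In parallel, I would compute $\dim(J_n)_2$ independently: using Proposition~\ref{prop:intorus}, inverting the $p_I$ turns $R/J_nR$ into a partial Laurent polynomial ring, so its degree-$2$ piece admits an explicit Hilbert-series description, and $\dim(J_n)_2 = \dim{\rm Sym}_2(W) - \dim (R/J_n)_2$ can be extracted.

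Second, I would verify that each of the four listed polynomials is a highest weight vector in $J_n$. Class \eqref{q1} is the square trinomial~\eqref{eq:trinomial2face}, with weight $(0,0,2,\ldots,2)$ generating $S_{002\cdots 2}$ of dimension $3^{n-2}$. Class \eqref{q2} with $m=3$ is the edge trinomial~\eqref{eq:trinomialedge}; for larger odd $m$ in \eqref{q2} and even $m$ in \eqref{q3}, the claimed identities reduce to iterated Sylvester-style determinantal relations for symmetric matrices, which can be verified by induction on $m$, and in each case the highest weight $(1, 1, 0^{m-2}, 2^{n-m})$ yields an irreducible of dimension $4 \cdot 3^{n-m}$. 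For \eqref{q4}, the vanishing on $V(J_n)$ follows from a signed expansion obtained by polarizing the square trinomial, and its highest weight is $(0^m, 2^{n-m})$ with $m = 2k$, giving $\dim S_{0^m 2^{n-m}} = 3^{n-2k}$. Annihilation by the raising operators $e_k \in \mathfrak{g}$ is verified directly: $e_k$ acts on the generators $p_I, a_{ij|K}$ by moving the index $k$ in and out of $I$ or $K$ with a combinatorial sign, and the alternating signs in \eqref{q2}--\eqref{q4} are precisely calibrated so that all contributions cancel.

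Third, the $S_n$-orbit of each class supplies additional highest weight vectors. Class \eqref{q1} yields $\binom{n}{2}$ distinct irreducibles (one per pair). Classes \eqref{q2} and \eqref{q3} together yield $\binom{n}{m}\binom{m}{2}$ distinct irreducibles at each $m$ (one per choice of $m$-subset and distinguished pair within it), each of dimension $4 \cdot 3^{n-m}$. Class \eqref{q4} yields $\binom{n}{2k}$ distinct irreducibles at each even $m = 2k$, but each appears in $(J_n)_2$ with multiplicity $2k$: the stabilizer in $S_m$ of the distinguished index $1$ is $S_{m-1}$, and moving the role of this distinguished index inside $[m]$ produces $m$ linearly independent highest weight vectors in the same $S_{0^m 2^{n-m}}$. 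Summing the contributions $\binom{n}{2} \cdot 3^{n-2}$, $4 \binom{n}{m}\binom{m}{2} \cdot 3^{n-m}$, and $2k \cdot \binom{n}{2k} \cdot 3^{n-2k}$ reproduces exactly formula~\eqref{eq:magicdim}, and matching with the independent dimension count from the first step shows that no further irreducibles can appear.

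The main obstacle will be the combinatorial precision of the multiplicity-$2k$ claim for class \eqref{q4}: one must rule out the possibility that further highest weight vectors of weight $(0^m, 2^{n-m})$ exist beyond those generated by $S_m$-conjugation, and similarly exclude spurious irreducibles with lower highest weights. A clean way to handle this is to intersect each weight space $({\rm Sym}_2(W))^\lambda$ with the kernel of all raising operators $e_k$ restricted to $(J_n)_2$, and count dimensions weight by weight. Once the multiplicity in each weight space is pinned down, the isotypic decomposition of $(J_n)_2$ is determined by $\mathfrak{sl}_2^n$ representation theory, and the theorem follows.
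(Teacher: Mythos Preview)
Your overall architecture is reasonable, but the load-bearing step---an independent computation of $\dim(J_n)_2$ via Proposition~\ref{prop:intorus}---does not work. That proposition identifies $R/J_nR$ with a partial Laurent polynomial ring only after inverting all $p_I$; this localization destroys the standard grading on $\RR[\mathcal{P}\cup\mathcal{A}]$, so there is no well-defined ``degree-$2$ piece'' of $R/J_nR$ from which to extract $\dim(\RR[\mathcal{P}\cup\mathcal{A}]/J_n)_2$. Without an independent dimension count, your matching argument cannot rule out further irreducibles, and the proof collapses to the fallback you mention at the end: directly bounding the space of highest weight vectors weight by weight. That is in fact the paper's route. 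It takes a general highest weight quadric $f\in(J_n)_2$ not coming from smaller $n$, argues its weight must lie in $\{0,1\}^n$ (hence, up to $S_n$, is $1^40^{n-4}$, $1^20^{n-2}$, or $0^n$), writes $f$ with undetermined coefficients, imposes annihilation by the raising operators, and then tracks carefully chosen monomials $\sigma_\pi=\prod_i\sigma_{i,\pi(i)}$ in the image of $f$ in $\RR[\Sigma]$ to force enough linear relations among those coefficients. This monomial-chasing is what pins down the multiplicities exactly (in particular the value $m$ for type~\eqref{q4} and the value $0$ for weight $1^40^{n-4}$), and it is absent from your plan.

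A secondary gap: your verification that \eqref{q2}--\eqref{q4} lie in $J_n$ is too thin. ``Iterated Sylvester-style relations'' and ``polarizing the square trinomial'' do not obviously yield these specific alternating sums. The paper's argument for \eqref{q4} expands each term into monomials $\sigma_\pi$, groups them by the cycle type of $\pi$, and exhibits explicit sign-cancelling matchings: when $\pi$ has an even cycle $\widetilde{C}$ not containing $1$, one moves $\widetilde{C}$ between the two parts of the partition; when all cycles are odd, one slides the break point along the cycle containing $1$ and then counts the surviving contributions from the $p$-part and the $a$-part separately, checking that the factor $2$ in \eqref{q4} balances them. This is a substantive combinatorial argument, not a corollary of a standard determinantal identity.
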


To find these quadrics, we used the Lie algebra $\mathfrak{g} = \mathfrak{sl}(2, \mathbb{R})^{\oplus n}$
of the group $ G = {\rm SL}_2(\RR)^n$.    As $n$ increases, so does the number of quadrics.
However, just a small fraction is new: most come from earlier ones via
lowering operators in~$\mathfrak{g}$.  These are described in \cite[Remark~III.16]{lukediss}.

The $k$-th \emph{lowering operator} ${\ell}_k$ is the following endomorphism
of $W = W_{\rm pr} \oplus \bigoplus_{i,j} W_{\rm ap}^{ij}$:
\[\begin{array}{lcr}
p_L \mapsto \begin{cases}p_{L \cup \{k\}} & \text{if } k \notin L\\ 0 & \text{otherwise} \end{cases} & \qquad &
a_{ij|L} \mapsto \begin{cases}a_{ij|L \cup \{k\}} & \text{if } k \notin L \cup \{i, j\} \\ 0 & \text{otherwise} \end{cases}
\end{array}\]
Similarly, the $k$-th \emph{raising operator} $r_k$  acts on $W$ as follows:
\[\begin{array}{lcr}
p_L \mapsto \begin{cases}p_{L \setminus \{k\}} & \text{if } k \in L\\ 0 & \text{otherwise} \end{cases} & \qquad &
a_{ij|L} \mapsto \begin{cases}a_{ij|L \setminus \{k\}} & \text{if } k \in L  \\ 0 & \text{otherwise} \end{cases}
  \end{array}
\]
These operators are extended to ${\rm Sym}_2(W)$ by the Leibniz rule~\cite[\S 8.1]{FH}:
\[
\ell_k(vw) \,\,=\,\, \ell_k(v) \cdot w \,+\, v \cdot \ell_k(w) \quad\text{ and }\quad
r_k(vw) \,\,=\,\, r_k(v) \cdot w \,+\, v \cdot r_k(w).
\]
These endomorphisms of ${\rm Sym}_2(W)$ restrict to the $G$-submodule $(J_n)_2$.

\bigskip
\begin{table}[ht]
\centering
\begin{tabular}{c||ccccccc}
 $n$ & 2 & 3 & 4 & 5 & 6 & 7 & 8 \\
 \hline
 \# quadrics in $J_n$ &  1 &  21  & 226  & 1810 & 12261 & 74613 & 421716
\end{tabular}
\caption{The number of quadratic generators of~$J_n$.}
\label{tab:expgens}
\end{table}

\begin{remark} \label{rem:highest} A nonzero polynomial lies in the
kernel of all possible raising (respectively, lowering) operators if
and only if it is a highest (respectively, lowest) weight vector.
\end{remark}

\begin{figure}[h]
\centering
\adjustbox{scale=.8,center}{%
\begin{tikzcd}[column sep=small]
& & (0,0,2,2) \arrow[swap]{ld}{{\ell}_3} \arrow{rd}{{\ell}_4} & & \\
& (0,0,0,2) \arrow{ld}[swap]{{\ell}_3} \arrow{rd}{{\ell}_4} & & (0,0,2,0) \arrow{ld}[swap]{{\ell}_3} \arrow{rd}{{\ell}_4} & \\
(0,0,-2,2) \arrow{rd}{{\ell}_4} & & (0,0,0,0) \arrow{ld}[swap]{{\ell}_3}\arrow{rd}{{\ell}_4} & & (0,0,2,-2) \arrow{ld}[swap]{{\ell}_3}\\
& (0,0,-2,0) \arrow{rd}{{\ell}_4}& & (0,0,0,-2) \arrow{ld}[swap]{{\ell}_3} & \\
& & (0,0,-2,-2) & &
\end{tikzcd}
}
\caption{A visualization of the $G$-module $S_{0022}$ inside $(J_4)_2$. Arrows pointing down and left represent lowerings via ${\ell}_3$,
 while arrows pointing down and right represent lowerings via
 ${\ell}_4$. \label{fig:loweringposet}}
\end{figure}
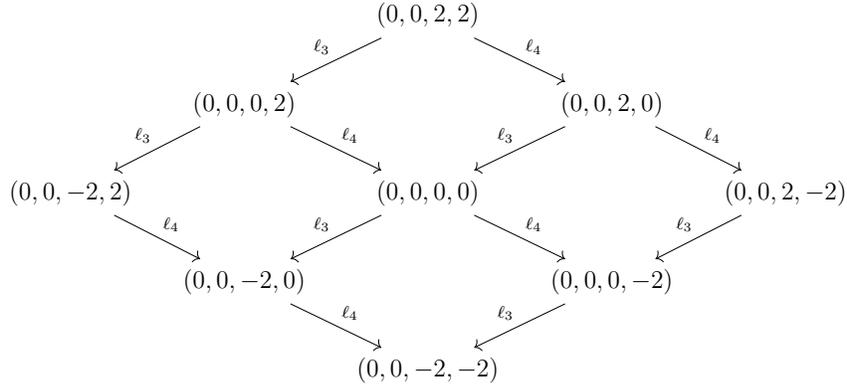

 \begin{example} \label{ex:monomial}
Consider the square trinomial $\,p_{12}p \, -\, p_1p_2 \,+\, a_{12}^2 \,$
in~\eqref{q1}.  If $n=2$, then it has weight $00$.  For $n>2$ the
weight is $0022\ldots 2$.  The quadric~\eqref{q1} is a
highest weight vector since it is annihilated by the raising operators  $r_k$.  It generates the
$3^{n-2}$-dimensional $G$-module $S_{002\ldots 2} $ inside $ (J_n)_2$.
To get an $\RR$-basis of this $G$-module, we apply all lowering
operators, which yields $3^{n-2}$ quadrics.  For instance, if $n=4$
then lowering via $\ell_3$ yields
$p_3p_{12} + p p_{123} - p_{13}p_2 - p_1p_{23} +
2a_{12}a_{12|3}$.  Taking into account all $\binom{n}{2}$ permutations
of  $002\ldots 2$, we find $3^{n-2}\binom{n}{2}$ quadrics
originating from~\eqref{q1}.  This explains the first summand
in~\eqref{eq:magicdim}.  \hfill $\diamondsuit$
\end{example}

\begin{proof}[Proof of Theorem \ref{thm:quadricgens}]
The proposed quadrics lie in the kernel of the raising
operators and hence are highest weight vectors by
Remark~\ref{rem:highest}.  The count in~\eqref{eq:magicdim} is
explained by working through the action of the lowering operators on
\eqref{q1}-\eqref{q4}.
This was illustrated in Example \ref{ex:monomial} above.
Specifically, each of the $\binom{n}{2}$ quadrics
in~\eqref{q1} contributes $3^{n-2}$ quadrics to~$J_n$.  For fixed $m$,
each of the $\binom{n}{m}\binom{m}{2}$ quadrics of types~\eqref{q2}
and~\eqref{q3} contributes $4\cdot 3^{n-m}$ quadrics.  Similarly, for fixed
$m$, each of the $m \binom{n}{m}$ highest weight quadrics of type~\eqref{q4}
gives rise to $3^{n-m}$ linearly independent quadrics in~$J_n$.

We next prove that quadrics of type~\eqref{q4} lie in~$J_{n}$,
i.e.~they map to zero in
$\RR[\Sigma] = \RR[\sigma_{11},\sigma_{12},\ldots,\sigma_{nn}$].  The proofs
for  \eqref{q2} and  \eqref{q3}  are similar, but simpler.
Without loss of generality we assume that $m=n$ (and hence $n$ is
even).  Any monomial in the quadrics~\eqref{q4} maps to some monomial
of~$\det \Sigma$.  Specifically, $p_Lp_{L'}$ and each
$a_{1j|K}a_{1j|K'}$ map to monomials
$\label{eq:monomial} \sigma_{\pi} :=
\sigma_{1,\pi(1)}\sigma_{2,\pi(2)} \cdots \sigma_{n,\pi(n)}$ where
$\pi \in S_{n}$.  To show this for the $a$ monomials,
 it is helpful to arrange the rows of $\Sigma$ as
$(1,K,j,K')$ and the columns as $(j,K,1,K')$. With this arrangement,
\begin{equation}\label{eq:sigmasplit}
\Sigma \,\,= \,\,
\begin{pmatrix}
\Sigma_{1K\times jK} & \Sigma_{1K\times 1K'}\\
\Sigma_{jK'\times jK} & \Sigma_{jK' \times 1K'}
\end{pmatrix}
\end{equation}
Then $a_{1j|K}a_{1j|K'}$ maps to a monomial in the expansion of
$\,\det \Sigma_{1K\times jK} \cdot \det\Sigma_{jK' \times 1K'}$.

Fix a monomial $\sigma_\pi$ and let $C_1C_2 \cdots C_s$ be the cycle
decomposition of~$\pi$.  Assume $1\in C_1$.  Let $r$ be the number
of cycles of length $ \geq 3$.  Since $\Sigma$ is symmetric,
replacing a cycle $C_i$ in $\pi$ by its inverse does not
change~$\sigma_{\pi}$.
The monomial $\sigma_{\pi}$ appears in the image of $p_Lp_{L'}$ whenever
$(C_1, \ldots, C_s)$ refines the partition~$(L, L')$, and
 it appears with the same sign as in $\det \Sigma$.  The monomial
$\sigma_{\pi}$ appears in the image of~$a_{1j|K}a_{1j|K'}$ only if
$C_{1}$ contains $j$, as seen from~\eqref{eq:sigmasplit}.
Additionally, each of $C_2,C_3,\ldots,C_s$ must be contained in either
$K$ or~$K'$.  Finally, if $C_{1}= (1,i_{2},\dots,i_{l},j,i_{l+2},\dots, i_{t})$, then
$\{i_{2},\dots,i_{l}\} \subseteq K$ and
$\{i_{l+2},\dots,i_{t}\} \subseteq K'$.  These three properties together
characterize the monomials $\sigma_\pi$ that appear in the image of~\eqref{q4}.
 According to our sign convention for the $a_{ij|K}$, the product $a_{1j|K}a_{1j|K'}$
has a global minus with respect to $\det\Sigma$, again visible
from~\eqref{eq:sigmasplit} as the columns $1$ and $j$ have been
exchanged.

Assume first that $\pi$ has an even cycle $\widetilde{C}$.  Since
$n$ is even, there is another even cycle in $\pi$, and we can assume
$1\notin \widetilde{C}$.  Consider the following matching of terms
of~\eqref{q4}:
\begin{gather*}
(p_Lp_{L'}, p_{L \cup \widetilde{C}}p_{L'\setminus \widetilde{C}})
\text { if } \widetilde{C} \nsubseteq L, \quad
(p_Lp_{L'}, p_{L \setminus \widetilde{C}}p_{L' \cup \widetilde{C}}) \text{ otherwise, } \\
(a_{1j|K}a_{1j|K'}, a_{1j|K \cup \widetilde{C}}a_{1j|K' \setminus
\widetilde{C}}) \text{ if } \widetilde{C} \nsubseteq K, \quad
(a_{1j|K}a_{1j|K'}, a_{1j|K \setminus \widetilde{C}}a_{1j|K' \cup
\widetilde{C}}) \text{ otherwise}.
\end{gather*}
Since $\widetilde{C}$ has odd cardinality, the matched terms differ in
signs and cancel in the image.

Let now $\pi$ be a product of odd cycles and denote
$C_1 = \{1, i_2, i_3, \ldots, i_{2w}\}$.  We can again produce
cancellations by matching the following terms for any $1< u \leq w$:
\[
\bigl( \,a_{1i_{2u-1}|K}a_{1i_{2u-1}|K'}\,,\,\, a_{1i_{2u}|K \cup \{i_{2u-1}\}}a_{1i_{2u}|K' \setminus \{i_{2u-1}\}} \,\bigr).
\]
After subtraction of the matched terms, the remaining terms are of
the form $a_{1i_{2}|K}a_{1i_{2}|K'}$.  We now count the occurrences of
$\sigma_{\pi}$ in the images of these.  For the $p$-part, there are
$2^{s-1}$ partitions $(L,L')$ of $[n]$ that coarsen the cycles
of~$\pi$.  For each of these, there are $2^r$ copies
of~$\sigma_{\pi}$ because there are $r$ cycles with cardinality~$\ge 3$.
In total, the coefficient of $\sigma_{\pi}$ in the image of
$\sum_{(L,L')} p_{L}p_{L'}$ is~$2^{s-1+r}$.
For the $a$-part, we distinguish two cases.  First, if $C_1$ is a
transposition, there are $2^{s-2}$ partitions $(K,K')$ that coarsen
$(C_{2},\dots,C_{s})$.  Again, $\sigma_\pi$ appears $2^r$ times from
reorientations of cycles of length~$\ge 3$.  Thus the total count is
$2^{(s-2)+r} = 2^{s+r-2}$.  Now, if $C_{1}$ is not a transposition, then
there are $r-1$ cycles in $C_2, \ldots, C_s$ with cardinality~$\ge 3$.
Then $2^{r-1}$ copies of $\sigma_\pi$ appear for each of the $2^{s-1}$
coarsenings.  Again, the total count is $2^{(r-1)+(s-1)} = 2^{s+r-2}$.
In \eqref{q4}, the coefficient $2$ corrects the count, and the sign of
the monomials in the $a$ terms has a global minus relative to
the determinant of~$\Sigma$.


We now show that the $G$-module $(J_n)_2$ is spanned by
 the quadrics  \eqref{q1}-\eqref{q4}.
 Let $f$ be any quadric in~$J_n$.
We can assume that $f \notin J_m$ for $m < n$ and $f$
is a highest weight vector. A~priori the weight of
$f$ is an element of $\{-2, -1, 0, 1, 2\}^n$. We claim that it is
in~$\{0,1\}^n$.  To see this, assume first that $-2$ or $-1$
appears in the weight.  In this case, raising $f$ at the
corresponding index yields a nonzero quadric of higher weight.  Moreover, an entry
$2$ can only appear if the corresponding index appears in no variable
of $f$ and thus $f\in J_{n-1}$.

Given the weights of $a_{ij|K}$ and $p_L$, the only possible weights
for $f$ (up to permutation) are $111100 \ldots 0$, $1100 \ldots 0$,
and~$00 \ldots 0$.  The following are general quadrics for these weights:
\begin{align*}
111100 \ldots 0\colon &\quad \sum_{K \subseteq [n] \setminus \{1, 2, 3, 4\}} d_K \cdot a_{12|K}a_{34|K^c}\\
1100 \ldots 0\colon &\quad \sum_{L \subseteq [n] \setminus \{1,2\}} c_L \cdot p_L a_{12|L^c}
\,\,+\,\, \sum_{j=3}^n \sum_{K \subseteq [n] \setminus \{1, 2, j\}} d^{(j)}_K \cdot a_{1j|K}a_{2j|K^c}\\
00 \ldots 0\colon &\quad \sum_{\substack{(L, L')\,{\rm partition} \\ {\rm  of }\,[n]}}c_L \cdot p_L p_{L'}
\,\,+\, \sum_{\substack{i, j \in [n] \\ i < j}}\sum_{\substack{(K, K')\, {\rm partition} \\
{\rm of }\,[n] \setminus \{i,j\}}} \!\!\!\!\! d^{(ij)}_{K} \cdot a_{ij|K}a_{ij|K'}.
\end{align*}
That $f$ lies in the kernel of all raising operators
imposes conditions on the coefficients $c,d$.  In particular, all
coefficients in an inner sum (like $c_L$ or $d^{(j)}_K$ for a fixed $j$)
differ by at most a sign, in an alternating fashion.  More precisely,
for each $i \in L$ one has $c_{L\backslash \{i\}} + c_L = 0$ and
hence, inductively, $c_L = (-1)^{|L|}c_{\emptyset}$.
This implies that $c_{\emptyset} = c_{[n]} = (-1)^n c_{\emptyset}$,
 and a similar statement holds for each $d^{(ij)}_{\emptyset}$.
 We conclude that,  when $n$ is odd, there can be no quadric
of weight $00\ldots 0$ that lies in $J_n$ and satisfies our hypotheses.

By the first part of the proof, the quadrics of types
\eqref{q1}-\eqref{q4} do arise.  It is therefore enough to prove that
there are no further linearly independent quadrics in each weight.  To
do this, we look at the image of $f$ in~$\mathbb{R}[\Sigma]$.

\begin{itemize}
\item $111100 \ldots 0$: The monomial
$\,\sigma_{1,2} \sigma_{3,5} (\prod_{i=5}^{n-1}\sigma_{i, i+1})
\sigma_{n,4}\,$ is among the terms in the image of $a_{12}a_{34|5\ldots n}$ only and thus
$d_\emptyset = 0$.  Hence no quadrics arise.
\item $1100 \ldots 0$, $n$ odd: Similarly,
$\sigma_{1,3}\sigma_{2,3} (\prod_{i=4}^{n-1}\sigma_{i, i+1})
\sigma_{n,4}$ yields $c_\emptyset=d_\emptyset^{(3)}$.  Permuting
suitably we find $c_\emptyset = d_\emptyset^{(j)}$ for each $j$, and
hence there is at most one quadric of this weight.
\item $1100 \ldots 0$, $n$ even:
$\sigma_{1,2}(\prod_{i=3}^{n-1}\sigma_{i, i+1}) \sigma_{n,3}$ and
$\sigma_{1,3}\sigma_{3,4}\sigma_{4,2}(\prod_{i=5}^{n-1}\sigma_{i,
i+1}) \sigma_{n,5}$ give that $c_\emptyset=0$ and
$d_\emptyset^{(3)}= d_\emptyset^{(4)}$; permuting the indices suitably
we get that $d_\emptyset^{(i)} = d_\emptyset^{(j)}$ for all $i, j$ and
hence there is at most one quadric with this weight.
\item $00 \ldots 0$, $n \geq 4$ even: When $n \geq 4$, the preimage of
the monomial $\prod_{i=1}^{n/2}\sigma_{2i-1,2i}^2$ gives that
$2c_\emptyset = d_\emptyset^{(12)} + d_\emptyset^{(34)} + \cdots +
d_\emptyset^{(n-1, n)}$. From this relation and its permutations one
derives that
$d_\emptyset^{(ij)} + d_\emptyset^{(kl)} = d_\emptyset^{(ik)} +
d_\emptyset^{(jl)}$ for any four distinct indices $i, j, k, l$.
Consequently, all coefficients in $f$ can be expressed in terms of
$d_\emptyset^{(1j)}$ (where $j$ ranges from $2$ to $n$)
and~$d_\emptyset^{(23)}$.  Thus, the dimension of the associated
vector subspace of $(J_n)_2$ is at most~$n$.\qedhere
\end{itemize}
\end{proof}

\section{Tropical Geometry}\label{s:valuatedG}

In recent years, the theory of matroids has been linked tightly to the
emerging field of tropical geometry \cite{BB, MS}. Every matroid defines a
tropical linear space, and conversely, every tropical linear space
corresponds to a {\em valuated matroid}.  First introduced by Dress
and Wenzel \cite{DW, DW91} as a generalization of matroids, valuated
matroids are now best understood as vectors of tropical Pl\"ucker
coordinates.  For a textbook introduction to this topic see
\cite[Chapter~4]{MS}.

Tropical geometry is a combinatorial shadow of algebraic geometry over
a field with valuation. The field of real Puiseux series,
$\RR \{ \! \{ \epsilon \} \!\}$, is our primary example. This field is
ordered and it contains the rational functions $\RR(\epsilon)$.  The
unknown $\epsilon$ is positive but smaller than any positive real
number.  Covariance matrices with entries that contain $\epsilon$ can
be found in the realizations of gaussoids by Ln\v{e}ni\v{c}ka and
Mat\'u\v{s}~\cite[Table~1]{LM}. Indeed, statisticians frequently
consider Gaussian distributions that depend on a perturbation
parameter~$\epsilon$.  The development in this section represents a
systematic approach to the analysis of such distributions.

A {\em valuated gaussoid} on $[n]$ is a map
$\nu : \mathcal{P} \cup \mathcal{A} \rightarrow \RR $ such that the
minimum of $\nu(m_1),\nu(m_2),\nu(m_3)$ is attained at least twice for
every quadratic trinomial $m_1+m_2+m_3$ in $T_n$.  Here $\nu(m_i)$ is
the sum of the values of $\nu$ on the two terms in $m_i$.  In other
words, a valuated gaussoid is a point $\nu$ in the tropical prevariety
defined by the trinomials \eqref{eq:trinomial2face}
and~\eqref{eq:trinomialedge}.
Recall that $V(J_n) = V(T_n)$ in the torus by Proposition \ref{prop:intorus}.
Every point $\nu$ in the tropical variety
${\rm trop}(V(J_n)) = {\rm trop}(V(T_n))$ is a {\em realizable} valuated gaussoid.
The distinction between valuated gaussoids and those that are realizable mirrors
the distinction in \cite[\S~4.4]{MS} between tropical linear spaces
and tropicalized linear spaces.  The former are parametrized by the
Dressian whereas the latter are parametrized by the tropical
Grassmannian.  This is the distinction between the tropical prevariety
and tropical variety defined by our trinomials.

In Section \ref{s:realizable} we encounter non-realizable valuated gaussoids for
$n \geq 4$.  Here we focus on the case $n=3$. The variety $V(J_3)$
equals the Lagrangian Grassmannian ${\rm LGr}(3,6) \subset \PP^{13}$.
That $6$-dimensional variety has a $3$-dimensional torus
action. Modulo lineality, the tropical variety
${\rm trop}({\rm LGr}(3,6))$ is a $3$-dimensional fan, hence a
$2$-dimensional polyhedral complex.

 Recall that ${\rm LGr}(3,6)$ is a linear section
of the classical Grassmannian ${\rm Gr}(3,6) \subset \PP^{19}$. That
$9$-dimensional variety has a $5$-dimensional torus action.
Modulo its lineality space, the {\em tropical Grassmannian} ${\rm trop}({\rm Gr}(3,6))$ is
a $4$-dimensional fan, hence a $3$-dimensional polyhedral complex.
It is glued from  $990$ tetrahedra and $15$ bipyramids \cite[Example~4.3.15]{MS}.
This complex is well-known to tropical geometers. A detailed description is found in
 \cite[\S~5.4]{MS}.

 The following is our main result in this section.
 In the course of proving it, we also describe the
 inclusion of ${\rm trop}({\rm LGr}(3,6))$ inside  ${\rm trop}({\rm Gr}(3,6))$,
 and we compute {\em Khovanskii bases} and  {\em Newton--Okounkov bodies} as in~\cite{KM}.
 In Corollary \ref{cor:1016} we connect to statistics by
 explaining the ${\rm MTP}_2$ distributions encoded in the
 {\em positive tropical variety} ${\rm trop}_+({\rm LGr}(3,6))$.

\begin{theorem}
\label{thm:J3tropical}
For $n=3$, all valuated gaussoid are realizable, so they are precisely
the points in the tropical Lagrangian Grassmannian
${\rm trop}({\rm LGr}(3,6))$.  The underlying $2$-dimensional
polyhedral complex has $35$ vertices, $151$ edges, and $153$ facets.
The facets come in nine symmetry classes: there are
$12+8+48+24+6+24+24+1$ triangles and $6$ quadrilaterals. Seven of the
nine facet classes represent {\em prime cones} in the sense of
Kaveh--Manon {\rm \cite[\S~5]{KM}}.
\end{theorem}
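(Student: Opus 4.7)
The plan is to decompose the theorem into four tasks: (a) showing that every valuated gaussoid on $[3]$ is realizable; (b) computing the $f$-vector $(35, 151, 153)$ of the underlying polyhedral complex; (c) classifying the $153$ facets into the stated nine symmetry orbits; and (d) determining which of these orbits consist of prime cones.

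For (a), I would first invoke Proposition \ref{prop:intorus}: in the torus the ideals $T_3$ and $J_3$ coincide, so any valuated gaussoid (which has finite coordinates by definition) automatically lies in the tropical prevariety of the trinomial generators of $T_3$ inside that torus. To upgrade this to membership in ${\rm trop}(V(J_3))$, I would verify that these $18$ trinomials already form a tropical basis of $J_3$ in the torus. This is a finite cone-by-cone check implementable with \texttt{gfan\_tropicalbasis}: on every maximal cone one tests that ${\rm in}_\omega(J_3)$ contains no monomial not already witnessed by a trinomial. Once this is established, the Fundamental Theorem of Tropical Geometry \cite[Thm.~3.2.3]{MS} applied to the irreducible variety $V(J_3) = {\rm LGr}(3,6)$ produces a Puiseux series lift of every tropical point, giving realizability.

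For (b) and (c), I would compute the polyhedral structure of ${\rm trop}({\rm LGr}(3,6))$ directly. The ideal $J_3$ is given by the $21$ quadrics from Example \ref{ex:J3}, and the identification \eqref{eq:identify} realizes ${\rm LGr}(3,6)$ as a linear slice of ${\rm Gr}(3,6) \subset \PP^{19}$. This lets me either run \texttt{gfan\_tropicaltraverse} on $J_3$ directly, or intersect the ambient linear subspace with the known fan ${\rm trop}({\rm Gr}(3,6))$ from \cite[Example~4.3.15]{MS}. Either route outputs the pointed $3$-dimensional fan obtained after quotienting by the $3$-dimensional lineality coming from the maximal torus of $G$; intersecting with a sphere produces the $2$-dimensional polyhedral complex, from which the $f$-vector $(35, 151, 153)$ is read off. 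Orbit decomposition under the symmetry group $\ZZ/2\ZZ \rtimes S_3$ of Section \ref{s:symmetry} then splits the $147$ triangular facets into classes of sizes $12, 8, 48, 24, 6, 24, 24, 1$ and the $6$ quadrilateral facets into a single orbit, reconciling the count $147 + 6 = 153$.

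For (d), for each of the nine orbits I would select a weight $\omega$ in the relative interior of a representative facet cone, compute the initial ideal ${\rm in}_\omega(J_3)$ in \texttt{Macaulay2}, and test primality via minimal primary decomposition. This isolates the seven orbits whose representative cones are prime in the Kaveh--Manon sense \cite{KM}. The main obstacle is the tropical-basis verification in (a): without it the valuated gaussoid condition is only \emph{a priori} weaker than membership in ${\rm trop}(V(J_3))$, and bridging this gap requires the cone-by-cone initial-ideal check described above. Granting that check, the remaining combinatorial enumeration and primality tests are routine for \texttt{gfan} and \texttt{Macaulay2}.
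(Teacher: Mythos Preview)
Your overall strategy matches the paper's: compute ${\rm trop}(V(J_3))$ directly (the paper does this by identifying ${\rm LGr}(3,6)$ as a linear slice of ${\rm Gr}(3,6)$ and intersecting tropically), read off the $f$-vector, partition the facets into symmetry orbits, and test primality of the initial ideals on one representative per orbit. The tropical-basis check you propose in (a) is exactly the content of the realizability claim, and the paper also leaves this as a cone-by-cone computation.

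There are, however, two concrete errors in your plan that would cause it to fail as written. First, the lineality space of ${\rm trop}(V(J_3))\subset\RR^{14}$ has dimension $4$, not $3$: in addition to the three directions from the maximal torus of $G={\rm SL}_2(\RR)^3$ there is one more from the overall projective grading (see the explicit matrix \eqref{eq:lineality} in the paper). With only a $3$-dimensional quotient you would obtain a $3$-sphere section, not the $2$-dimensional complex in the statement.

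Second, and more seriously, the symmetry group you invoke is too small. The group $\ZZ/2\ZZ\rtimes S_3$ has order $12$, so no orbit under it can have more than $12$ elements; yet the theorem asserts an orbit of size $48$. The correct group is the full hyperoctahedral group $B_3=(\ZZ/2\ZZ)^3\rtimes S_3$ of order $48$, the symmetry group of the $3$-cube, acting as described in Section~\ref{s:symmetry}. Under $B_3$ the $153$ facets fall into the nine orbits of the stated sizes; under your smaller group they would split into many more. Once you replace the group and correct the lineality dimension, the rest of your computational outline is sound and coincides with the paper's approach.
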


\begin{proof}[Proof and Explanation]
These results are obtained by computation. The tropical variety of $J_3$ is a pure $7$-dimensional
fan in $\RR^{14}$ whose lineality space $L$ has dimension $4$. One  dimension comes from
the usual grading, since $J_3$ is a homogeneous ideal. The others come
from the maximal torus of $G = {\rm SL}_2(\RR)^3$. Hence ${\rm trop}(V(J_3)) = {\rm trop}(
{\rm LGr}(3,6))$ is a pure $3$-dimensional fan in
$\RR^{14}/L$. The coordinates on $\RR^{14}$ are dual to a
distinguished spanning set of $\,\RR^{14} /L$:
\begin{equation}
\label{eq:fourteen}
\bigl( a_{12},a_{12|3},a_{13},a_{13|2}, a_{23}, a_{23|1}, \,p, p_1, p_{12}, p_{123}, p_{13}, p_2, p_{23}, p_3 \bigr).
\end{equation}
With this ordering of the $14$ generators, the lineality space of ${\rm trop}(J_3)$ equals
\begin{equation}
\label{eq:lineality} L \,\, = \,\, {\rm rowspace} \,
  \begin{pmatrix}
   1 &    1 &  1 &  1   &  1  &   1   & 1 &  1 &  1  &   1  &  1  &  1 &  1  &  1 \\
   0 &    0 &  0 &  0   & -1  &   1   &-1 &  1 &  1  &   1  &  1  & -1 & -1  & -1 \\
   0 &    0 & -1 &  1   &  0  &   0   &-1 & -1 &  1  &   1  & -1  &  1 &  1  & -1 \\
  -1 &    1 &  0 &  0   &  0  &   0   &-1 & -1 & -1  &   1  &  1  & -1 &  1  &  1 
  \end{pmatrix}.
\end{equation}
We use the symbols in (\ref{eq:fourteen}) to denote the corresponding spanning vectors of $\RR^{14} /L \simeq \RR^{10}$.

The $35 = 6+8+3+12+6$ rays of the fan ${\rm trop}(V(J_3))$ come in five symmetry classes: \vspace{-0.07in}
\begin{small}
\begin{itemize}
\item $6$ of type {\bf a}: $\,\{a_{12},a_{13},a_{23},a_{12|3},a_{13|2},a_{23|1}\}$ \vspace{-0.09in}
\item $8$ of type {\bf p}: $\,\{p,p_1,p_2,p_3,p_{12},p_{13},p_{23},p_{123} \}$ \vspace{-0.08in}
\item $3$ of type {\bf A}: $\,\{a_{12}+a_{12|3}+a_{13}+a_{13|2},
                                a_{12}+a_{12|3}+a_{23}+a_{23|1},a_{13}+a_{13|2}+a_{23}+a_{23|1} \}$ \vspace{-0.08in}
\item $12$ of type {\bf B}: $\, \{\,a_{12}{+}a_{12|3}{+}a_{13}{+}a_{13|2}{+}2p{+}2p_1, 
a_{12}{+}a_{12|3}{+}a_{23}{+}a_{23|1}{+}2p{+}2p_2, 
a_{13}{+}a_{13|2} $ \\ $ {+}a_{23} {+}  a_{23|1}{+}2p{+}2p_3, 
a_{12}{+}a_{12|3}{+}a_{23}{+}a_{23|1}{+}2p_1{+}2p_{12},
 a_{13}{+}a_{13|2}{+}a_{23}{+}a_{23|1}{+}2p_1{+}2p_{13}, $ \\
  $ a_{12} {+}a_{12|3}{+}a_{13}{+}a_{13|2}{+}2p_2{+}2p_{12},
 a_{13}{+}a_{13|2}{+}a_{23}{+}a_{23|1}{+}2p_2{+}2p_{23}, 
 a_{12}{+}a_{12|3}{+}a_{13}{+}a_{13|2} $ \\ $ {+}2p_3{+}2p_{13}, 
 a_{12}{+}a_{12|3}{+}a_{23}{+}a_{23|1}{+}2p_3{+}2p_{23},
  a_{13}{+}a_{13|2}{+}a_{23}{+}a_{23|1}{+}2p_{12}{+}2p_{123},  $ \\ $
  a_{12}{+}a_{12|3}{+}a_{23}{+}a_{23|1} {+}2p_{13}{+}2p_{123},
   a_{12}{+}a_{12|3}{+}a_{13}{+}a_{13|2}{+}2p_{23}{+}2p_{123}\}$ \vspace{-0.08in}
   \item $6$ of type {\bf C}: $\,\{a_{12} {+}a_{13|2}{+}2p_2{+}2p_{12}, \,
a_{23}{+}a_{12|3}{+}2p_3{+}2p_{23}, \,
a_{23}{+}a_{13|2}{+}2p_2{+}2p_{23}, $ \\ $
a_{13|2}{+}a_{23|1}{+}2p_{12}{+}2p_{123}\,,\,\,
a_{12|3}{+}a_{13|2}{+}2p_{23}{+}2p_{123}\,,\,\,
a_{12|3}{+}a_{23|1}{+}2p_{13}{+}2p_{123}\} $
          \end{itemize}
\end{small}
Each of the sums in the lists above is a vector in $\RR^{14}/L$.
For instance, the last sum in type ${\bf C}$ represents the vector
$\,( 0 ,1 ,0, 0, 0, 1, \, 0,   0 ,    0,           2,2, 0,0,0)+ L\,$
if we use  the ordering in (\ref{eq:fourteen}).

The tropical Lagrangian Grassmannian ${\rm trop}(V(J_3)) $ is the intersection of
the tropical  Grassmannian ${\rm trop}({\rm Gr}(3,6))$ with a linear space.
This intersection is computed in the $20$ Pl\"ucker coordinates
with the {\tt Macaulay2} code in Example \ref{ex:J3}.
We shall use the identification of the $20$ Pl\"ucker coordinates  with the $14$ principal
and almost-principal minors given in  (\ref{eq:identify}).

The tropical variety ${\rm trop}(V(J_3))$ has a unique
coarsest fan structure with $153$ facets. These come in $9$
orbits under the symmetries of the $3$-cube. In what follows
we list these orbits. Each facet in eight of the orbits
lies in a unique facet of
${\rm trop}({\rm Gr}(3,6))$. We name that facet in the notation of
\cite[\S~5.4]{MS}.
Facets of type {\bf ppp} lie in triangles of ${\rm trop}({\rm Gr}(3,6))$.
 Here is now the list of all
$153=12+8+48+24+6+24+24+1+6$ facets of ${\rm trop}(V(J_3))$:

\begin{small}
\begin{itemize}
\item $12$ triangles of type {\bf app}, like   $\{\,a_{12}, p_{3}, p_{123} \,\}$. They lie in tetrahedra EEEE. \vspace{-0.08in}
\item  $8 $ triangles of type {\bf ppp}, like  $\{\,p_1, p_2, p_3 \,\}$. They lie in triangles EEE.
\vspace{-0.08in}
\item $48$ triangles  of type {\bf apB}, like $\{\,a_{12|3},\, p, \,a_{12}{+}a_{12|3}{+}a_{13}{+}a_{13|2}{+}2p{+}2p_1\,\}$.
\\ They lie in tetrahedra EEEG  of the tropical Grassmannian ${\rm trop}({\rm Gr}(3,6))$.
 \vspace{-0.08in}
\item $24$ triangles of type {\bf ppC}, like $  \{\,p,\, p_{12}, \,a_{13|2}{+}a_{23|1}{+}2p_{12}{+}2p_{123}\,\} $. \\
Twelve lie in tetrahedra EEFFa, and others lie in tetrahedra EEFFb.
 \vspace{-0.08in}
\item $ 6$ triangles of type {\bf aAA}, like  $\{\,a_{12}, \,a_{12}{+}a_{12|3}{+}a_{13}{+}a_{13|2},\, a_{12}{+}a_{12t3}{+}a_{23}{+}a_{23|1}\,\}$.
\\
They lie in tetrahedra EEFFb.
\vspace{-0.08in}
\item $24$ triangles of type {\bf aAB},
like $\,\{a_{12},\, a_{12}{+}a_{12|3}{+}a_{13}{+}a_{13|2},$ \\ $
a_{12}{+}a_{12|3}{+}a_{13}{+}a_{13|2}$  $  {+}2(p_{23}{+}p_{123}) \} $.
They lie in tetrahedra EEFG.
 \vspace{-0.08in}
\item $24$ triangles of type {\bf pBC}: $\{\,p\,,\, a_{12}{+}a_{12|3}{+}a_{13}{+}a_{13|2}{+}2(p+p_1),$ \\ $
a_{12|3}+a_{13|2}+2(p_{23}+p_{123})\} $.
They lie in tetrahedra EEFG.
 \vspace{-0.08in}
\item $ 1$ triangle of type {\bf AAA}:
   $\{a_{12}{+}a_{12|3}{+}a_{13}{+}a_{13|2}, \, a_{12}{+}a_{12|3}{+}a_{23}{+}a_{23|1},$ \\
   $ a_{13}{+}a_{13|2}{+}a_{23}{+}a_{23|1}\}$.
This triangle lies in a bipyramid FFFGG.
   \vspace{-0.08in}
\item $6$ squares of type {\bf ABCB}, like $\{
a_{13}{+}a_{13|2}{+}a_{23}{+}a_{23|1}+2(p_1{+}p_{13}),\,a_{13}{+}a_{13|2}{+}a_{23}{+}a_{23|1},\, $ \\ $
a_{13}{+}a_{13|2}{+}a_{23}{+}a_{23|1}{+}2(p_2{+}p_{23}),
a_{13|2}{+}a_{23}{+}2(p_2{+}p_{23})\}$, lying in bipyramids FFFGG.
\end{itemize}
\end{small}
We conclude that all $7$ combinatorial types of valuated matroids in ${\rm trop}({\rm Gr}(3,6))$
are realized by valuated gaussoids. This is similar to the result
of Brodsky, Ceballos, and Labb\'e in~\cite{BCL}.

Each of our $153$ facets supports a monomial-free initial ideal
${\rm in} _\nu(J_3)$.  Here $\nu \in \RR^{14}$ is a vector in the
relative interior of that $7$-dimensional cone, and the initial ideal
is understood in the sense of \cite[\S~2.4]{MS}.  For the facets of
type {\bf ppp} and {\bf ABCB}, the initial ideal ${\rm in} _\nu(J_3)$ is not a
prime ideal.  For the other seven types, the initial ideal ${\rm in} _\nu(J_3)$
is toric and hence prime.  In those cases the $14$ coordinates form a
{\em Khovanskii basis} of our algebra, by the results of~\cite{KM}.
The list of types above thus classifies the {\em toric degenerations}
of the Lagrangian Grassmannian $V(J_3)$ in $\PP^{13}$, and from
${\rm in} _\nu(J_3)$ we can identify the corresponding {\em
Newton-Okounkov bodies}.  We illustrate this for type {\bf app} in the
example that follows.
\end{proof}

\begin{figure}[ht]
  \begin{center}
    \begin{tikzpicture}[scale=1.6, rotate=-5]
      \tikzset{vertex/.style = {shape=circle,fill=black,draw,minimum
          size=6pt, inner sep=0pt}}
      \tikzset{edge/.style = {line width=2pt}}

      \node [vertex, label=below:$p$] (p) at (0,-2) {};
      \node [vertex, label=left:$p_{1}$] (p1) at (-2,0) {};
      \node [vertex, label=right:$p_{2}$] (p2) at (2,0) {};
      \node [vertex, label=above:$p_{12}$] (p12) at (0,2) {};
      \node [vertex, label=below left:$p_{13}$] (p13) at (-.75,0) {};
      \node [vertex, label=below right:$p_{23}$] (p23) at (0.75,0) {};

      \draw (p) -- (p13) node[vertex, midway, label=left:$a_{13}$] (a13) {};
      \draw (p) -- (p23) node[vertex, midway, label=right:$a_{23}$] (a23) {};
      \draw (p12) -- (p13) node[vertex, midway, label={184:$a_{23|1}$}] (a231) {};
      \draw (p12) -- (p23) node[vertex, midway, label=below right:$a_{13|2}$] (a132) {};
      \draw (p13) -- (p23) node[vertex, midway, label=below:$a_{12|3}$] (a123) {};

      \draw (p) -- (p1) -- (p12) -- (p2) -- (p);
      \draw (p1) -- (p13);
      \draw (p23) -- (p2);

    \end{tikzpicture}
  \end{center}
  \vspace{-0.2in}
  \caption{\label{fig:zwei} Schlegel diagram of a $3$-polytope.     Its join with the triangle
    $\{a_{12}, p_{3}, p_{123} \}$ is a $6$-polytope. This is the Newton-Okounkov body
for a toric degeneration of ${\rm LGr}(3,6)$ in $ \PP^{13}$.    }
\end{figure}
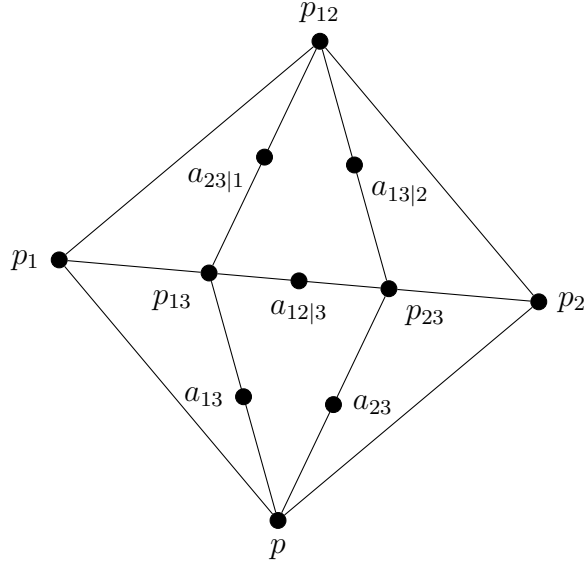

\begin{example} \label{ex:NObody}
Consider the  {\bf app} triangle $\{a_{12}, p_{3}, p_{123} \}$. The corresponding
$7$-dimensional cone in ${\rm trop}(V(J_3)) \subset \RR^{14}$ consists of
all vectors $\nu = \mu+(a, 0 ,0,0, 0, 0, 0, 0, 0, b, 0, 0, 0, c)$,
where $a,b,c > 0$ and $\mu $ is in the subspace $L$ in (\ref{eq:lineality}).
Each of these $\nu$ is a valuated gaussoid.

The initial ideal ${\rm in} _\nu(J_3)$ is obtained by setting
$a_{12}$, $p_3$ and $p_{123}$ to zero in all $21$ quadrics in Example
\ref{ex:J3}. The resulting ideal is generated by binomials and is prime.
Hence, ${\rm in} _\nu(J_3)$ is a toric ideal.  In the language of
\cite{KM}, the cone indexed by $\{a_{12}, p_{3}, p_{123} \}$ is a
prime cone.

The vector $\nu$ defines a degeneration of the Lagrangian Grassmannian
${\rm LGr}(3,6) = V(J_3)$ to the toric variety $V({\rm
in}_\nu(J_3))$. Both are $6$-dimensional and have degree $16$.  The
corresponding lattice polytope is the Newton--Okounkov body. It has
dimension $6$ and volume $16$.  It is the join of the triangle
$\{a_{12}, p_{3}, p_{123} \}$ with the $3$-dimensional polytope
shown in Figure~\ref{fig:zwei}. This polytope has $6$ vertices, $11$
edges and $7$ facets. Five additional points lie on edges.  The toric
ideal for this configuration of $11 = 6+5$ lattice points in $3$-space
is equal to ${\rm in}_\nu(J_3)$.  \hfill $\diamondsuit$
\end{example}

The positive part of the tropical Grassmannian plays an important role
in the theory of cluster algebras \cite{BCL, SW}.  Note that
$\,{\rm trop}_+({\rm Gr}(3,6))\,$ was worked out in~\cite[\S~6]{SW}:
it is the boundary of a $4$-polytope known as the
$D_4$-associahedron. In what follows we determine the analogue for the
Lagrangian Grassmannian, that is, the space of positive valuated
gaussoids
\begin{equation}
\label{eq:tropplus}
{\rm trop}_+({\rm LGr}(3,6)) \,\, = \,\,
{\rm trop}_+({\rm Gr}(3,6)) \,\, \cap \,\,
{\rm trop}({\rm LGr}(3,6)) .
\end{equation}

\begin{corollary} \label{cor:1016} The intersection {\rm
(\ref{eq:tropplus})} corresponds to a triangulated $2$-sphere with
$10$ vertices, $24$ edges and $16$ facets.  It is the boundary of the
simplicial $3$-polytope shown in Figure \ref{fig:tensixteen}.
\end{corollary}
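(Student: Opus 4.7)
The plan is to compute \eqref{eq:tropplus} directly using the positive tropical criterion. By the results of Speyer and Williams \cite[\S~6]{SW}, the ambient positive tropical Grassmannian ${\rm trop}_+({\rm Gr}(3,6))$ is the boundary of the $D_4$-associahedron, and our task is to carry out the slicing by the linear subspace that defines ${\rm trop}({\rm LGr}(3,6))$ inside ${\rm trop}({\rm Gr}(3,6))$. Concretely, a point $\nu \in {\rm trop}(V(J_3))$ lies in the positive part if and only if, for every polynomial $f = \sum_\alpha c_\alpha m_\alpha$ in $J_3$ with $c_\alpha \in \{\pm 1\}$, the minimum of $\{\nu(m_\alpha)\}$ is attained by at least two monomials of opposite sign. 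All 21 quadric generators of $J_3$ listed in Example~\ref{ex:J3} have coefficients in $\{\pm 1\}$, so this criterion applies directly.

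First I would classify the $35$ rays of ${\rm trop}({\rm LGr}(3,6))$ from the proof of Theorem~\ref{thm:J3tropical} according to whether they lie in the positive part. For each ray, one picks a representative in $\RR^{14}$ modulo the lineality space \eqref{eq:lineality} and checks whether some $L$-shift satisfies all 21 positivity inequalities; since the inequalities descend to $\RR^{14}/L$, this is a single test per ray. The five ray classes $\textbf{a}, \textbf{p}, \textbf{A}, \textbf{B}, \textbf{C}$ can be handled with only one representative each under the action of $B_3$ and $S_3$; the surviving rays form the $10$ vertices of the desired $2$-sphere.

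Next I would sweep through the $153$ maximal cones, organized into the nine symmetry classes of Theorem~\ref{thm:J3tropical} (eight classes of triangles plus the class \textbf{ABCB} of quadrilaterals). For each class, the representative cone has a positive interior if and only if, at a generic interior point, every one of the 21 trinomials is sign-balanced. The enumeration yields $16$ positive maximal cones, all triangles; in particular none of the six \textbf{ABCB} quadrilaterals survives, which is consistent with the claim that the boundary complex is simplicial.

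Finally, the resulting complex has $f$-vector $(10,24,16)$. Euler's identity $10-24+16=2$ together with the face-incidence count $3\cdot 16 = 2\cdot 24$ certifies that it is a triangulated $2$-sphere. To upgrade this to the boundary of a convex $3$-polytope, I would project the $10$ positive rays through the lineality quotient onto a generic $3$-dimensional complement and take their convex hull in $\RR^3$; verifying that the $16$ positive cones appear as the facets of that hull yields Figure~\ref{fig:tensixteen}. The principal obstacle is the cone sweep in the middle step: for each symmetry class one must track the sign pattern of every trinomial to distinguish cones whose interior is positive from cones in which positivity holds only on a proper face. As with Theorem~\ref{thm:J3tropical}, this enumeration is most efficiently carried out with polyhedral-computation software that supports sign data.
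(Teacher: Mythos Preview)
Your overall strategy matches the paper's: enumerate which of the $153$ maximal cones of Theorem~\ref{thm:J3tropical} lie in the positive part. The paper phrases the test as ``${\rm in}_\nu(J_3)$ is generated by pure difference binomials $m_1-m_2$,'' which for the $21$ trinomial generators of $J_3$ is exactly your sign-balance criterion. It then lists the $16$ surviving cones explicitly: six of type {\bf apB}, six of type {\bf aAB}, three of type {\bf aAA}, and the unique {\bf AAA} cone; the ten vertices and the polytope in Figure~\ref{fig:tensixteen} are read off from this list.

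However, your symmetry reduction contains a genuine error that would produce a wrong answer. The hyperoctahedral group $B_3 = (\ZZ/2\ZZ)^3 \rtimes S_3$ does \emph{not} preserve ${\rm trop}_+({\rm LGr}(3,6))$: its $(\ZZ/2\ZZ)^3$ factor acts by swapping columns of $({\rm Id}_3 \,\, \Sigma)$ and hence by sign changes on the minors, which destroys the distinction between positive and negative coefficients in the quadrics. Only the $S_3$-action by index permutation preserves positivity. Consequently, positivity is \emph{not} constant on the five $B_3$-orbits of rays or on the nine $B_3$-orbits of cones, so testing one representative per $B_3$-class fails. Concretely, the ten positive rays are one {\bf p}-ray (out of $8$), three {\bf a}-rays (out of $6$), all three {\bf A}-rays, three {\bf B}-rays (out of $12$), and no {\bf C}-rays; likewise only $6$ of the $48$ cones in the $B_3$-class {\bf apB} are positive, only $6$ of the $24$ in {\bf aAB}, and only $3$ of the $6$ in {\bf aAA}. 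The correct reduction uses $S_3$ alone, under which the $35$ rays split into $13$ orbits and the $153$ cones into substantially more than nine, so the sweep is larger than you indicate.
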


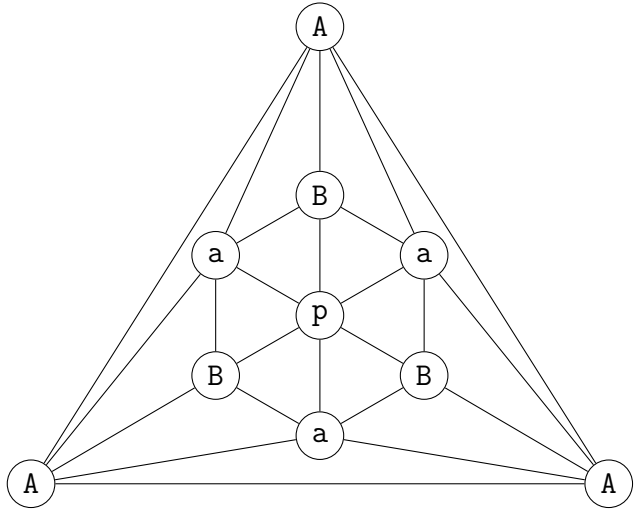
\begin{figure}[h]
\begin{center}
\begin{tikzpicture}[scale=.8]
\tikzset{vertex/.style = {shape=circle,draw,minimum size=18pt, inner sep=2pt}}
\tikzset{edge/.style = {line width=2pt}}

\node [vertex] (p) at (0,0) {\tt{p}};
\node [vertex] (adown) at (0,-2) {\tt{a}};
\node [vertex] (aleft) at (-1.732,1) {\tt{a}};
\node [vertex] (aright) at (1.732,1) {\tt{a}};
\node [vertex] (Bup) at (0,2) {\tt{B}};
\node [vertex] (Bleft) at (-1.732,-1) {\tt{B}};
\node [vertex] (Bright) at (1.732,-1) {\tt{B}};
\node [vertex] (Aup) at (0,4.8) {\tt{A}};
\node [vertex] (Aleft) at (-4.8,-2.8) {\tt{A}};
\node [vertex] (Aright) at (4.8,-2.8) {\tt{A}};

\draw (Aup) -- (Aleft) -- (Aright) -- (Aup);
\draw (Bup) -- (aright) -- (Bright) -- (adown) -- (Bleft) -- (aleft) -- (Bup);
\draw (p) -- (Bup);
\draw (p) -- (Bleft);
\draw (p) -- (Bright);
\draw (p) -- (adown);
\draw (p) -- (aleft);
\draw (p) -- (aright);
\draw (Aup) -- (aleft) -- (Aleft);
\draw (Aup) -- (aright) -- (Aright);
\draw (Aleft) -- (adown) -- (Aright);
\draw (Bleft) -- (Aleft);
\draw (Bright) -- (Aright);
\draw (Bup) -- (Aup);
\end{tikzpicture}
\vspace{-0.1in}
\end{center}
 \caption{The $3$-polytope that represents
 all positive valuated matroids for $n=3$.
 Its boundary, a simplicial $2$-sphere, is the
 positive tropical Lagrangian Grassmannian $ {\rm trop}_+({\rm LGr}(3,6))$. }
 \label{fig:tensixteen}
\end{figure}

 \begin{proof}[Proof and Explanation]
We examined all $153$ maximal cones in Theorem \ref{thm:J3tropical}.
A cone lies in (\ref{eq:tropplus}) if and only if its
initial ideal ${\rm in}_\nu(J_3)$ is generated by
pure difference binomials $m_1 - m_2$. This happens
for the following $16$ cones.  For each of them, we list a representative vector $\nu$:
$$
{\tt apB} /  {\rm EEEG} \qquad  \qquad \quad
{\tt aAB} / {\rm EEFG} \qquad \qquad \quad
{\tt aAA} /  {\rm EEFFb} \qquad \quad\,
{\tt AAA} / {\rm  FFFGG}
$$
$$
\begin{bmatrix}
 2 2 2 5 0 0 8 4 0 0 0 0 0 0 \\
 2 5 2 2 0 0 8 4 0 0 0 0 0 0 \\
 2 2 0 0 2 5 8 0 0 0 0 4 0 0 \\
 2 5 0 0 2 2 8 0 0 0 0 4 0 0 \\
 0 0 2 2 2 5 8 0 0 0 0 0 0 4 \\
 0 0 2 5 2 2 8 0 0 0 0 0 0 4
 \end{bmatrix}
\quad
\begin{bmatrix}
 0 0 6 6 6 9 4 0 0 0 0 0 0 4 \\
 0 0 6 9 6 6 4 0 0 0 0 0 0 4 \\
 6 6 0 0 6 9 4 0 0 0 0 4 0 0 \\
 6 6 6 9 0 0 4 4 0 0 0 0 0 0 \\
 6 9 0 0 6 6 4 0 0 0 0 4 0 0 \\
 6 9 6 6 0 0 4 4 0 0 0 0 0 0 \\
 \end{bmatrix}
\quad
\begin{bmatrix}
 4 4 2 2 6 9 0 0 0 0 0 0 0 0 \\
 4 4 6 9 2 2 0 0 0 0 0 0 0 0 \\
 6 9 4 4 2 2 0 0 0 0 0 0 0 0 \\
 \end{bmatrix}
\quad
 \begin{bmatrix}
7 7 5 5 6 6 0 0 0 0 0 0 0 0
\end{bmatrix}
$$
For instance,  the vector $\nu = ( 2 2 2 5 0 0 8 4 0 0 0 0 0 0) $,
indexed as in (\ref{eq:fourteen}), is a positive valuated gaussoid.
It lies in a cone of type ${\tt apB}$, and hence in a cone of type EEEG in ${\rm trop}_+({\rm Gr}(3,6))$.
Each positive valuated gaussoid $\nu$ records the $\epsilon$-orders of the
principal and almost-principal minors of a covariance matrix that defines
a Gaussian ${\rm MTP}_2$ distribution  over $\RR \{\! \{ \epsilon \} \! \}$.

For example, $\nu = (7 7 5 5 6 6 0 0 0 0 0 0 0 0)$ is realized in this sense by the covariance matrix
$$ \Sigma \,\, = \,\, \hbox{the inverse of} \,\,
\begin{pmatrix}
1 & - \epsilon^7 & -\epsilon^5 \\
 -\epsilon^7 & 1 & -\epsilon^6 \\
-\epsilon^5 & -\epsilon^6 & 1
\end{pmatrix}.
$$
Figure \ref{fig:tensixteen} is a combinatorial classification of
all Gaussian ${\rm MTP}_2$ distributions over $\RR \{ \! \{ \epsilon \} \! \}$.
\end{proof}

\section{Realizability}\label{s:realizable}

In this section we study the realizability problem for gaussoids and
oriented gaussoids.  There is a substantial literature on the
realizability of matroids and oriented matroids.  We point
to~\cite{FMM} and the references therein.  It is our aim to extend
this to the setting developed in this paper.  Our first result
concerns the realizability of uniform oriented gaussoids for $n=4$.

\begin{theorem}\label{thm:classifyOriented4}
There are $46$ symmetry classes of uniform oriented gaussoids for
$n=4$, listed in Table~\ref{tab:census4}.  All but one of them are
realizable.  The unique non-realizable class admits a bi-quadratic
final polynomial in the sense of Bokowski and Richter {\rm
\cite{BoRG}}.
\end{theorem}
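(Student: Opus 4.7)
I would prove the theorem in three computational-algebraic steps: enumerate orbits, exhibit $45$ explicit realizations, and construct a single non-realizability certificate for the remaining class.

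\emph{Step 1: Orbit count.} By Theorem~\ref{thm:oriegaussoidcensus} there are $5{,}376$ uniform oriented gaussoids on $[4]$. I would feed this list into a short script that partitions it into orbits under the group $R_4 \rtimes S_4$, where $R_4$ acts by reorientation along subsets $L \subseteq [4]$ (with only $2^{n-1}=8$ distinct elements, since $\phi_L = \phi_{[n] \setminus L}$) and $S_4$ permutes indices. Running the orbit computation in \texttt{sage} (or the companion code on \texttt{www.gaussoids.de}) should return $46$ classes, which then populate Table~\ref{tab:census4}.

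\emph{Step 2: Realizations for $45$ classes.} For each class I would pick a representative sign map $\phi : \mathcal{A} \to \{\pm 1\}$ and search for a positive definite symmetric $\Sigma \in \mathbb{R}^{4 \times 4}$ whose $24$ almost-principal minors have signs matching $\phi$. Since positive definiteness and strict sign conditions are open conditions, numerical search inside parametric families is effective; each numerical hit can be rounded to a rational matrix and verified symbolically by evaluating all $24$ minors. A practical starting point is to embed the $n=3$ realizations from the table in Section~\ref{s:orientedAndPos} into a $4 \times 4$ block and then perturb the fourth row and column to impose the signs of the remaining almost-principal minors.

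\emph{Step 3: The non-realizable class.} For the last orbit, I would build a bi-quadratic final polynomial in the style of Bokowski and Richter~\cite{BoRG}. The recipe is to collect a finite list of quadrics $Q_1,\dots,Q_s$ in $J_4$, drawing from both the edge trinomials~\eqref{eq:trinomialedge} and the generators in Theorem~\ref{thm:quadricgens}. For each $Q_i$, the prescribed signs of $\phi$ together with $p_I > 0$ determine the sign of each monomial of $Q_i$, with the trinomial compatibility in Theorem~\ref{thm:gaussoid} forcing the remaining free signs. One then seeks nonnegative coefficients $\lambda_{ij}$ such that the bi-quadratic combination $\sum_{i,j} \lambda_{ij} Q_i Q_j$ equals a sum of monomials that are all of one sign under $\phi$, contradicting the identity $\equiv 0$ in $J_4$. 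Once a candidate support is fixed, the existence of such coefficients becomes a linear programming feasibility question.

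\emph{Main obstacle.} Steps 1 and 2 are essentially bookkeeping, so the hard part is Step 3: choosing a small but sufficient family of quadrics so that the resulting LP has a strictly positive solution, and organizing the certificate as a bi-quadratic expression rather than a higher-degree one. I expect the search to be guided by the non-realizable gaussoid patterns already flagged in Remark~\ref{rem:RealNull}: the unique non-realizable uniform oriented class should refine one of those five orbits, and the polynomial identity displayed there (together with its images under the Lie algebra $\mathfrak{g}$ of Section~\ref{s:symmetry}) is the natural seed for the final polynomial.
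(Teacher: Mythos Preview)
Your Steps 1 and 2 match the paper's approach: orbit enumeration under $R_4 \rtimes S_4$ and a numerical/symbolic search for realizing matrices (the paper uses random search with dyadic rationals and the SCIP optimizer, then records explicit rational $\Sigma$).

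Step 3, however, contains two genuine errors. First, you have misread what a \emph{bi-quadratic final polynomial} in the sense of Bokowski--Richter is. It is not a degree-four combination $\sum_{i,j}\lambda_{ij}Q_iQ_j$ of quadrics in the ideal. Rather, each edge trinomial, once the signs from $\phi$ and $p_I>0$ are inserted, becomes an equation $x_1x_2 + x_3x_4 = x_5x_6$ among positive reals; this yields the strict inequalities $x_1x_2 < x_5x_6$ and $x_3x_4 < x_5x_6$. Passing to logarithms $y_i = \log x_i$ turns the whole system into a family of \emph{linear} inequalities, and one checks feasibility by LP. Infeasibility is witnessed by a dual vector, which unwinds to a finite list of strict inequalities whose left-hand sides and right-hand sides have the same product---a contradiction. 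The paper carries this out with just four edge trinomials for class \#36. Your proposed quartic Positivstellensatz search is a different (and much harder) computation, and would not produce a certificate in the Bokowski--Richter form the theorem claims.

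Second, your guiding heuristic is off: a \emph{uniform} oriented gaussoid has underlying gaussoid $\mathcal{G}=\emptyset$, so it cannot refine any of the five non-realizable (nonempty) gaussoid orbits of Remark~\ref{rem:RealNull}. Those orbits concern vanishing patterns of almost-principal minors, whereas the obstruction for class \#36 is purely a sign incompatibility with all $a_{ij|K}$ nonzero. The seed identity from Remark~\ref{rem:RealNull} is therefore irrelevant here; the certificate comes directly from a short multiplicative cycle among edge-trinomial inequalities.
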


\begin{table}[t]
\centering
\begin{tiny}
\[
\begin{matrix}
1 &  ++++++++++++++++++++++++ & (1/8, 1/16, 1/4, 1/4, 1/16, 1/8) \\
2 &  ++++++++++++------------ & (1/8, 1/16, 1/4, -1/2, -1/4, -1/16) \\
3 &  +++++++++++++-+---+----- & (1/4, 1/2, 1/16, 1/32, -1/128, -1/2) \\
4 &  +++++++++++++++++++++++- & 1/100 \cdot (44, 50, 51, 50, 51, 30)\\
5 &  ++++++++++-++-++----++++ & (1/4, 1/8, 1/512, 1/128, -1/4, 1/8) \\
6 &  ++++++++++--+-+----+++++ & (1/4, 1/2, 1/128, 1/64, -1/32, 1/4) \\
7 &  +++++++++++++++-+-+-+-+- & 1/100 \cdot (57, 57, 76, 39, 12, 12)\\
8 &  +++++++++++++++++-+++--- & 1/100 \cdot (64, 55, 60, 76, 32, 6)\\
9 &  +++++++++++++++-+-+----- & (1/16, 1/32, 1/2, 1/64, 1/128, -1/2) \\
10 &  +++++++++++++++++-+++-+- & 1/100 \cdot (45, 57, 66, 57, 26, 19)\\
11 &  ++++++++++-++-+++---++++ & 1/100 \cdot (75, 69, 45, 45, 7, 75)\\
12 &  +++++++++++++++---+----- & (1/16, 1/8, 1/2, 1/32, -1/512, -1/2) \\
13 &  ++++++++++--+---++++++++ & (1/16, 1/2, 1/8, 1/512, 1/64, 1/2) \\
14 &  ++++++++++--+-+-----++++ & (1/8, 1/2, 1/16, 1/64, -1/64, 1/4) \\
15 &  +++++++++++-+---+++++-++ & 1/100 \cdot (53, 76, 46, 8, 71, 27)\\
16 &  ++++++++++--++--++++++++ & (1/64, 1/8, 1/128, 1/128, 1/8, 1/8) \\
17 &  +++++++++++++-+-+-+-+--- & (1/2, 1/8, 1/8, 1/64, 1/32, 1/4096) \\
18 &  +++++++++++++-+-+-+-+-+- & (1/16, 1/2, 1/2, 1/64, 1/128, 1/8) \\
19 &  +++++++++++++++--------- & (1/2, 1/256, 1/64, 1/256, -1/128, -1/4) \\
20 &  ++++++++++-++-+-+---++++ & 1/100 \cdot (85, 67, 37, 39, 6, 64)\\
21 &  +++++++++++++++++-+----- & (1/4, 1/16, 1/2, 1/16, 1/32, -1/16) \\
22 &  +++++++++++-+---++++++++ & 1/100 \cdot (59, 59, 48, 13, 59, 59)\\
23 &  +++++++++++++++++-++---- & (1/2, 1/2, 1/8, 1/2, 1/256, -1/8) \\
24 &  ++++++++++-++-+-+---+++- & 1/100 \cdot (81, 84, 39, 43, 2, 49)\\
25 &  +++++++++++-+-+-+++++-++ & 1/100 \cdot (60, 85, 39, 21, 55, 27)\\
26 &  +++++++++++++++++-+-+-+- & (1/8, 1/16, 1/8, 1/16, 1/128, 1/512) \\
27 &  ++++++++++--+-+-++-+++++ & (1/4, 1/2, 1/1024, 1/128, 1/1024, 1/4) \\
28 &  +++++++++++-+++-++++++++ & 1/100 \cdot (58, 58, 41, 41, 58, 58)\\
29 &  +++++++++++++++++++++--- & (1/16, 1/8, 1/8, 1/4, 1/4, 1/256) \\
30 &  ++++++++++-+--+-----+++- & (1/16, 1/8, 1/16384, -1/8192, -1/4, 1/512) \\
31 &  ++++++++++++++++++++---- & (1/16, 1/4, 1/8, 1/8, 1/8, -1/32) \\
32 &  ++++++++++++++---------- & (1/8, 1/32, 1/2, 1/32, -1/8, -1/2) \\
33 &  ++++++++++--+-+-+---++++ & 1/100 \cdot (63, 70, 32, 33, 4, 63)\\
34 &  +++++++++---+---++++++++ & (1/4, 1/32, 1/256, 1/256, 1/2, 1/2) \\
35 &  +++++++++++++-+-+-+----- & (1/2, 1/4, 1/4, 1/16, 1/32, -1/4) \\
36 &  ++++++++++++++--+-+-+-+- &  \hbox{bi-quadratic final polynomial} \\
37 &  +++++++++++++----------- & (1/2, 1/2, 1/32, 1/128, -1/2, -1/16) \\
38 &  +++++++++++++++++-+-+--- & (1/4, 1/4, 1/16, 1/2, 1/256, 1/1024) \\
39 &  +++++++-++-++-+++---++++ & 1/100 \cdot (83, 46, 33, 33, 5, 83)\\
40 &  ++++++++++--+---+-++++++ & (1/4, 1/2, 1/32, 1/32768, 1/2048, 1/4) \\
41 &  +++++++++++-++--++++++++ & 1/100 \cdot (46, 46, 43, 30, 74, 74)\\
42 &  ++++++++++++++++++++++-- & (1/8, 1/256, 1/8, 1/64, 1/2, 1/256) \\
43 &  ++++++++++--+-+-+--+++++ & (1/4, 1/32, 1/512, 1/512, 1/4096, 1/2) \\
44 &  +++++++++++++++-+-+-+--- & 1/100 \cdot (73, 59, 71, 49, 25, 6)\\
45 &  ++++++++++-++-+-----++++ & (1/4, 1/2, 1/64, 1/128, -1/4, 1/8) \\
46 &  ++++++++++-++-+-----+++- & (1/2, 1/2, 1/1024, 1/256, -1/2, 1/8)
\end{matrix} \vspace{-0.22in}
\]
\end{tiny}
\caption{The $46$ symmetry classes of uniform oriented gaussoids for
$n=4$.\label{tab:census4}}
\end{table}

\begin{proof}[Proof and Explanation]
The $46$ classes were derived from the list of $5376$ uniform oriented
gaussoids in Theorem~\ref{thm:oriegaussoidcensus}.  The lists of $24$
signs in Table~\ref{tab:census4} is with respect to the ordering
$$ \begin{matrix} a_{12}, a_{12|3}, a_{12|4}, a_{12|34},  a_{13}, a_{13|2}, a_{13|4}, a_{13|24},
   a_{14}, a_{14|2}, a_{14|3}, a_{14|23},  \\  a_{23}, a_{23|1}, a_{23|4}, a_{23|14},
   a_{24}, a_{24|1}, a_{24|3}, a_{24|13},   a_{34}, a_{34|1}, a_{34|2}, a_{34|12} . \end{matrix}  $$
In each realizable case, we list the entries
$(\sigma_{12},\sigma_{13},\sigma_{14},\sigma_{23},\sigma_{24},\sigma_{34})$
of a positive definite symmetric $4 \times 4$-matrix $\Sigma$ with
$\sigma_{11} = \sigma_{22} = \sigma_{33} = \sigma_{44} = 1$ for that
oriented gaussoid.  The realization space of an oriented gaussoid is a
semi-algebraic set.  We used random search with values $2^{-k}$ for
small $k$ and the optimization software SCIP~\cite{scip} to find
realizations.

The oriented gaussoid \#36 is of special interest since it has a {\em
bi-quadratic final polynomial}. We review this concept from
\cite{BoRG}.  The edge trinomials can be written as
$ x_1 x_2 + x_3 x_4 - x_5 x_6=0 $, where each $x_i$ is a positive
unknown, equal to either some $p_I$ or some
$a_{ij|K}$ multiplied by its sign.
The equation hence implies the inequalities $\,x_1 x_2 < x_5 x_6$ and
$x_3 x_4 < x_5 x_6$.  After replacing each $x_i$ by its logarithm,
$y_i = {\rm log}(x_i)$, we get $y_1 + y_2 < y_5+y_6 $ and
$ y_3+y_4 < y_5 + y_6$.  Using Linear Programming (LP), we can easily
decide whether the resulting system of linear inequalities has a
solution. If not, then the oriented gaussoid is non-realizable.
A~solution to the dual LP yields a non-realizability certificate known
as bi-quadratic final polynomial.

Here is how it works for type \#36. Among the edge trinomials we find
the following:
\[
\begin{matrix} (-a_{23|4}) (p_{134})  +  (a_{13|4})(a_{12|34}) - (-a_{23|14})( p_{34}) \, ,\,\,
(a_{12|3}) (p_{134})  +  (a_{14|3})  (-a_{24|13}) - (a_{12|34})  (p_{13})     \\
(a_{23|1}) ( p_{134}) + (-a_{23|14}) (p_{13}) - (-a_{34|1})  (-a_{24|13})   \, , \,\,
(a_{34})( p_{134} ) +  (-a_{34|1})  (p_{34} ) - (a_{13|4}) ( a_{14|3}).
\end{matrix}
\]
These are elements of $J_4$, written in such a way that each parenthesis is positive for \#36.
From these four equations we infer the following inequalities among positive quantities:
\[
\begin{matrix}
     a_{13|4}  a_{12|34} &<& (-a_{23|14})  p_{34} & \qquad &
  a_{14|3} (-a_{24|13}) &<& a_{12|34}  p_{13} \\
    (-a_{23|14})  p_{13} &<& (-a_{34|1})  (-a_{24|13}) &  \qquad &
    (-a_{34|1})  p_{34}  &<&  a_{13|4}  a_{14|3}
    \end{matrix}
\]
The product of the left hand sides equals the product of the right
hand sides.
\end{proof}

We now briefly discuss the case $n=5$.  A {\em complex
realization} of a gaussoid $\mathcal{G}$ on $[n]$ is a symmetric
$n \times n$-matrix $\Sigma$ with entries in $\CC$ whose principal
minors are nonzero and whose vanishing almost-principal minors are
indexed by $\mathcal{G}$.   The following example can be viewed as a gaussoid
analog to the V\'amos matroid, which is the smallest non-realizable matroid.

\begin{example}\label{ex:VamosGaussoid}
Let $n=5$. The following collection of ten $2$-faces of the
$5$-cube is a gaussoid:
\[
\mathcal{G} \,= \, \bigl\{
a_{12},\,a_{13|4},\,a_{14|5},\,a_{15|23},\, a_{23|5},\, a_{24|135},\,
a_{25|34},\, a_{34|12},\, a_{35|1},\, a_{45|2} \,\bigr\}.
\]
To see that $\mathcal{G}$ is not realizable over $\CC$, consider the
ideal in $\QQ[\sigma_{12}, \sigma_{13}, \ldots,\sigma_{45}]$ generated
by these $10$ almost-principal minors, for a symmetric
$5 \times 5$-matrix with ones on the diagonal and unknowns
$\sigma_{ij}$ off the diagonal.  Saturation with respect to
$p_{24} = 1 - \sigma_{24}^2$ yields the maximal ideal
$\langle \sigma_{12}, \sigma_{13}, \ldots,\sigma_{45} \rangle$.  
This implies that  there is no complex symmetric $5 \times 5$-matrix  $(\sigma_{ij})$
with $\sigma_{13} p_{24} \not= 0$ for which all the $10$ minors 
in $\mathcal{G}$ are zero. \hfill$\diamondsuit$
\end{example}

With Example~\ref{ex:VamosGaussoid} it is now easy to define a
non-realizable valuated gaussoid.

\begin{example}\label{ex:valuatedNonRealizable}
Fix $\mathcal{G}$ as in Example~\ref{ex:VamosGaussoid}.  Let $\nu $ be
the map from $\mathcal{A} \cup \mathcal{P}$ to $\RR$ that takes
$\mathcal{G}$ to $1$ and
$(\mathcal{A}\backslash \mathcal{G}) \cup \mathcal{P} $ to~$0$.  By
examining all the edge and square trinomials in $(J_5)_2$, we can
verify that $\nu$ is a valuated gaussoid. However, it is not
realizable.  There is no point in $V(J_5)$ over the Puiseux series
field $\CC \{ \!\{ \epsilon \} \! \}$ whose coordinates have valuation
$\nu$.  Such a point would come from a symmetric matrix $\Sigma$ whose
entries are in $\CC \{ \!\{ \epsilon \} \! \}$ and have valuations
$\geq 0$.  Setting $\epsilon = 0$ in that matrix gives a complex
realization of~$\mathcal{G}$. But this does not exist.  \hfill
$\diamondsuit$
\end{example}

In the preprint version of this article we conjectured that
the non-realizable valuated gaussoid above is minimal.
In other words, we conjectured that 
all tropical gaussoids for $n =4$ are realizable, i.e.~the
square trinomials and edge trinomials are a tropical basis for~$J_4$.

This conjecture is false. It was disproved by G\"orlach, Ren and Sommars,
using their new algorithm for tropical basis verification \cite{GRS}.
Here is one of the explicit examples they found.

\begin{theorem}[G\"orlach et al.~\cite{GRS}] \label{thm:gaussoids}
  There exist non-realizable valuated gaussoids for $n=4$.
  \end{theorem}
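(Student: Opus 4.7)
The plan is to exhibit an explicit map $\nu: \mathcal{P} \cup \mathcal{A} \to \RR$ on the $16+24=40$ coordinates indexing $V(J_4) \subset \PP^{39}$ which is compatible with all $24$ square trinomials and all $96$ edge trinomials but does not arise as the vector of valuations of any point of $V(J_4)$ over the Puiseux series field. The first condition is a finite combinatorial check that qualifies $\nu$ as a valuated gaussoid. The hard part is certifying non-realizability, since this requires reasoning about the whole prime ideal $J_4$ rather than just the explicit generators used to define the prevariety.

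To certify that $\nu \notin {\rm trop}(V(J_4))$, I would exhibit a polynomial $f \in J_4$ whose tropicalization ${\rm trop}(f)$ attains its minimum at $\nu$ exactly once, or equivalently display a monomial inside the initial ideal ${\rm in}_\nu(J_4)$. Because $\nu$ is compatible by construction with every square and edge trinomial, no such $f$ can be one of those trinomials; it must be produced by combining them in a way whose leading terms cancel modulo trinomial relations but whose tropical image is strictly smaller. This is precisely the phenomenon that the tropical basis verification algorithm of G\"orlach, Ren and Sommars is designed to catch, and invoking their implementation is what makes the theorem effective. Concretely, I would start from the quadric generators of $J_4$ provided by Theorem \ref{thm:quadricgens} (augmented if necessary by higher-degree $S$-polynomials to form a Gr\"obner basis), traverse the maximal cones of the tropical prevariety of $T_4$ using the $B_4$-symmetry of Section \ref{s:symmetry} to quotient by orbits, and for each interior point $\nu$ compute ${\rm in}_\nu(J_4)$ via a Gr\"obner walk; the first cone on which a monomial appears yields the required witness pair $(\nu,f)$.

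The main obstacle is computational scale: the tropical prevariety of $T_4$ has an enormous number of maximal cones --- far larger than the $679$ gaussoid supports, because each support fans out into many valuation sectors --- so a naive cone-by-cone enumeration is infeasible. The GRS algorithm overcomes this by Gr\"obner-walk propagation that only revisits cones sharing a codimension-one wall with an already-examined one, together with aggressive monomial-detection shortcuts; this combinatorial pruning is what turns the question from an intractable classification into a localized search. Once the witness $(\nu,f)$ has been produced, the concluding step is a direct verification by hand: substitute $\nu$ into each of the $120$ trinomials to confirm that the minimum is attained at least twice, and into $f$ to confirm that its tropicalization achieves its minimum uniquely, thereby ruling out the existence of a Puiseux realization of $\nu$ and establishing the theorem.
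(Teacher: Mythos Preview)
Your proposal is correct and follows essentially the same approach as the paper: exhibit an explicit $\nu$, verify by finite check that it is compatible with all $120$ trinomials, and certify non-realizability by producing a monomial in ${\rm in}_\nu(J_4)$. The paper simply presents the explicit witness data found by G\"orlach, Ren and Sommars --- a specific $\nu \in \RR^{40}$ together with the monomial $a_{23}a_{23|1} \in {\rm in}_\nu(J_4)$ --- rather than describing the search procedure you outline, but the logical structure of the argument is identical.
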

  
\begin{proof}
We order the $40$ elements of $\mathcal{P} \cup \mathcal{A}$ as follows:
 $$ \begin{small} \begin{matrix}
 p_{\emptyset},p_{1},p_{12},p_{123},p_{1234},p_{124},p_{13},p_{134},
p_{14},p_{2},p_{23},p_{234},p_{24},p_{3},p_{34},p_{4} ,
  a_{12},a_{12|3},a_{12|34},a_{12|4}, a_{13},a_{13|2} ,\\
  a_{13|24},a_{13|4},a_{14},a_{14|2}, a_{14|23},a_{14|3},
  a_{23},a_{23|1},a_{23|14},a_{23|4},a_{24},a_{24|1},a_{24|13},a_{24|3},a_{34},a_{34|1},a_{34|12},a_{34|2} .
  \end{matrix} \end{small} $$
Let $\nu$ be the map $\mathcal{P} \cup \mathcal{A} \rightarrow \mathbb{R}$
that takes the following values, listed in the order above:
   $$ \! (
  14, 10, 6, 0, 6, 8, 8, 2, 8, 6, 6, 2, 8, 8, 8, 8, 8, 4, 2, 10, 9, 3, 
    5, 5,   9, 11, 1, 5, 7, 5, 5, 5, 7, 7, 1, 5, 8, 6, 4, 4).
  $$
  One can check that $\nu$ is a valuated gaussoid, i.e.~if $f$ is any of the
  square trinomials or edge trinomials in $J_4$ then
  ${\rm in}_\nu(f)$  is not a monomial.   On the other  hand,   
  the initial ideal   ${\rm in}_\nu(J_4)$ contains the monomial $a_{23}a_{23|1}$.
  Hence the valuated gaussoid $\nu$ is not realizable.
\end{proof}

We close the paper with two open problems concerning the
realizability of gaussoids. 
Realization problems can be formulated as feasibility problems of
(semi-)algebraic sets.  The following refers to Theorem
\ref{thm:gaussoidcensus}. It is a challenge as far as computation
goes, but it is also an excellent opportunity for gaining statistical
insights about Gaussian random variables.

\begin{challenge} Classify the $16981$ $(\ZZ/2\ZZ)^n\rtimes S_n$-orbits
of gaussoids for $n=5$ according to their realizability
over~$\CC$.  Classify all $254826$ $\ZZ/2\ZZ \rtimes S_n$-orbits
according to realizability.
\end{challenge}

The {\em Universality Theorem} due to Mn\"ev \cite[\S 8.6]{OM}
states, roughly speaking, that any variety arises as the
realization space of a matroid, and any semialgebraic set
arises as the realization space of an oriented matroid.
We wonder whether the same is true for gaussoids.

\begin{problem}
Does universality hold for gaussoids? Can arbitrary varieties and
arbitrary semialgebraic sets be the realization spaces of gaussoids
and oriented gaussoids respectively?
\end{problem}

\bigskip \bigskip
\bigskip

\begin{small}
\noindent {\bf Acknowledgements.} We thank Moritz Firsching, Paul
G\"orlach, Jon Hauenstein, Mateusz Micha{\l}ek, Peter Nelson, Yue Ren,
Caroline Uhler and Charles Wang for help with this project.  Bernd
Sturmfels was partially supported by the Einstein Foundation Berlin
and the US National Science Foundation (DMS-1419018, DMS-1440140).
Tobias Boege and Thomas Kahle were partially supported by the Deutsche
Forschungsgemeinschaft (314838170, GRK 2297, ``MathCoRe'').
\end{small}

\bigskip

\bigskip \medskip

\noindent
\footnotesize {\bf Authors' addresses:}

\smallskip

\noindent Tobias Boege, OvGU Magdeburg, Germany,
{\tt tboege@st.ovgu.de}

\noindent Alessio D'Al\`i,  Max-Planck Institute for Math in the Sciences, Leipzig, Germany,
{\tt alessio.dali@mis.mpg.de}

\noindent Thomas Kahle, OvGU Magdeburg, Germany,
{\tt thomas.kahle@ovgu.de}

\noindent Bernd Sturmfels,
 \  MPI-MiS Leipzig, {\tt bernd@mis.mpg.de} \ and \
UC  Berkeley,  {\tt bernd@berkeley.edu}
\end{document}